\newtheorem{thm}{Theorem}[section]
\newtheorem{lem}[thm]{Lemma}
\newtheorem{prop}[thm]{Proposition}
\newtheorem{cor}[thm]{Corollary}
\newtheorem{NN}[thm]{}
\theoremstyle{definition}\newtheorem{df}[thm]{Definition}
\theoremstyle{definition}\newtheorem{rem}[thm]{Remark}
\theoremstyle{definition}\newtheorem{exm}[thm]{Example}
\renewcommand{\phi}{\varphi}
\newcommand{\N}{\mathbb{N}}
\newcommand{\Z}{\mathbb{Z}}
\newcommand{\R}{\mathbb{R}}
\newcommand{\C}{\mathbb{C}}
\newcommand{\T}{\mathbb{T}}
\newcommand{\I}{{\mathbb{I}}}
\newcommand{\hm}{homomorphism}
\newcommand{\dt}{\delta}
\newcommand{\ep}{\epsilon}
\newcommand{\la}{\langle}
\newcommand{\ra}{\rangle}
\newcommand{\andeqn}{\,\,\,{\rm and}\,\,\,}
\newcommand{\rforal}{\,\,\,{\rm for\,\,\,all}\,\,\,}
\newcommand{\CA}{$C^*$-algebra}
\newcommand{\SCA}{$C^*$-subalgebra}
\newcommand{\af}{{\alpha}}
\newcommand{\bt}{{\beta}}
\newcommand{\dist}{{\rm dist}}
\newcommand{\D}{\mathbb D}
\newcommand{\beq}{\begin{eqnarray}}
\newcommand{\eneq}{\end{eqnarray}}
\newcommand{\tforal}{\,\,\,\text{for\,\,\,all}\,\,\,}
\newcommand{\tand}{\,\,\,\text{and}\,\,\,}
\newcommand{\Wlog}{Without loss of generality}
\newcommand{\diag}{{\rm diag}}
\newcommand{\td}{\tilde}
\title{Almost commuting self-adjoint operators   and measurements
}
\author{Huaxin Lin}
\date{}
\begin{document}

\maketitle

\begin{abstract}
We study the problem when an 
$n$-tuple  of self-adjoint operators 
in an infinite dimensional separable Hilbert space $H$ with small commutators 
 is close to an $n$-tuple  of commuting 
self-adjoint operators on $H.$ 
We give an affirmative answer  to the problem when the synthetic-spectrum and 
the essential synthetic-spectrum are close.
Examples are also exhibited that, in general, the answer to the problem  when $n\ge 3$ is negative
even the associated Fredholm index vanishes.
This is an attempt to solve a   problem proposed by David Mumford related to quantum theory and measurements. 
%
\end{abstract}

\section{Introduction}
At the First International Congress of Basic Science held in 2023,  David Mumford delivered 
the opening  plenary lecture on {\it Consciousness, Robots and DNA} (\cite{Mf}).
He began by  cautioning  the audience  that there would be no mathematics in his talk.
Nevertheless, by the very end of this magnificent lecture---which culminated with  the idea of 
DNA as  a measuring instrument opening a Pandora's box--- he proposed an unexpected  problem in
\CA s: determining  when almost-commuting self-adjoint operators can be  approximated by commuting ones.

A version of this  problem proposed  by Mumford may be reformulated as follows:
Let $\ep>0$ and $n\in \N$ be a positive integer.
 When is there a constant $\dt>0$ such that  the following statement holds?
 For any  separable  Hilbert space  $H$ and  self-adjoint operators 
 $T_1, T_2,...,T_n$ (with $\|T_j\|\le 1$) satisfying
 \beq\label{mfp-1}
 \|T_iT_j-T_jT_i\|<\dt,\,\,\, 1\le i,j\le n,
 \eneq
there exist self-adjoint operators $S_1,S_2,...,S_n$ on $H$
such that
\beq\label{mfp-2}
S_iS_j=S_jS_i\andeqn \|S_j-T_j\|<\ep,\,\, \, 1\le i,j\le n.
\eneq

In the case that $n=2$ and $H$  is any finite dimensional Hilbert space  (no bound on the dimension)
 the  exactly the same problem is known 
as von-Neumann-Kadison-Halmos problem for almost commuting self-adjoint matrices
(see \cite{Halmos1} and \cite{Halmos2}).  In this setting, the problem has an affirmative solution 
\cite{Linmatrices} (see also \cite{FR}  and  \cite{Has}, and \cite{LT} for  fixed dimensions).
However, in an infinite dimensional Hilbert space $H,$  as Mumford noted in his recent book (p.182) --
and as was previously known -- the answer is  negative  in general (see also Example 4.6 of \cite{FR}). 

 The affirmative solution for almost commuting self-adjoint matrices has spurred further research in 
 the study 
 of weak semi-projectivity in \CA s. It also has  a deep impact on the Elliott 
program of classification 
of amenable \CA s (see \cite{EGLP}, \cite{LinTAMS99}, \cite{Linann03} and \cite{Linduke}). 
Moreover,  Mumford's problem opens  new avenues for applying this branch of \CA\, theory 
to previously unexpected fields, 
such as DNA research and artificial intelligence (\cite{Mf2}).

The first main result  of this paper is  as follows:
 
 \begin{thm}\label{TTTmul}
Let $n\in \N$ and  $\ep>0.$  There exists $\dt(n, \ep)>0$ satisfying the following:
Suppose that $H$ is an infinite dimensional separable Hilbert space
and 
$T_1, T_2,...,T_n$ are  self-adjoint linear operators on $H$ with $\|T_i\|\le 1$ 
($1\le i\le n$) such that
\beq\label{TTmul00-1}
\|T_iT_j-T_jT_i\|<\dt,\,\,\, 1\le i,j\le n\tand
d_H(X, Y)<\ep/8,
\eneq
where $X=s{\rm Sp}^{\ep/8}(T_1, T_2,...,T_n)$ and $Y=s{\rm Sp}^{\ep/8}_{ess}((T_1, T_2,...,T_n)).$
Then there are bounded self-adjoint  linear operators $S_1, S_2,...,S_n$ on $H$
such that
\beq
S_iS_j=S_jS_i,\,\,\, 1\le i,j\le n\tand \|T_j-S_j\|<\ep,\,\,\, 1\le j\le n.
\eneq
\end{thm}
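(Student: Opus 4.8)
The plan is to recast the hypothesis as an approximate multiplicativity statement for a map out of a commutative \CA, and then to remove the obstruction to perturbing that map into a genuine $*$-homomorphism, using that the synthetic spectrum is uniformly close to its essential part. Since $\|T_i\|\le 1$, the tuple $(T_1,\dots,T_n)$ induces a linear map $\phi\colon C([-1,1]^n)\to B(H)$ (for instance $\phi(f)=\int \hat f(\xi)\,e^{i\xi_1T_1}\cdots e^{i\xi_nT_n}\,d\xi$ on Schwartz functions $f$), sending the coordinate functions to $T_1,\dots,T_n$, which once $\dt$ is small is $\eta$-multiplicative and $\eta$-selfadjoint on a prescribed finite set $\mathcal F$ of functions (the coordinate functions together with enough bump functions), with $\eta=\eta(n,\dt)\to 0$ as $\dt\to 0$. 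The two synthetic spectra translate, up to an error governed by $\eta$, into spectral data of $\phi$ and of its composition with the Calkin quotient: roughly, $X$ records the $\lambda$ for which $\sum_j(T_j-\lambda_j)^{2}$ is not bounded below by $(\ep/8)^{2}$, and $Y$ is its essential analogue. The goal is a genuine unital $*$-homomorphism $\psi\colon C([-1,1]^n)\to B(H)$ with $\|\psi(t_j)-T_j\|<\ep$ for $1\le j\le n$.

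\emph{Absorption into a model.} Choose a fine $\rho$-net $F\subseteq Y$ with $\rho\ll\ep$, and let $\psi_0$ be the representation of $C([-1,1]^n)$ whose joint spectrum is exactly $F$, each point of $F$ carried with infinite multiplicity; identify $H$ with $H\oplus H_0$ where $\psi_0$ acts on $H_0$. Two one-sided absorptions drive the argument. First, because every point of $F$ lies in the essential synthetic spectrum $Y$, a Voiculescu-type absorption theorem for $\eta$-multiplicative maps (uniform in $H$) yields a unitary $V$ with $\|V\phi(t_j)V^{*}-(\phi\oplus\psi_0)(t_j)\|<\eta_1$, where $\eta_1=\eta_1(n,\dt)\to 0$; this step does not use the closeness hypothesis. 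Secondly — and here the hypothesis enters — since $d_H(X,Y)<\ep/8$, every point of $X$ lies within $\ep/8$ of $Y$, hence within $\ep/8+\rho$ of $F$, so $\psi_0$ absorbs $\phi$ with tolerance $\ep/8+\rho$: there is a unitary $W'$ with $\|W'(\psi_0\oplus\phi)(t_j)(W')^{*}-\psi_0(t_j)\|<\ep/8+\rho+\eta_1$. Feeding these into an approximate two-sided (Elliott-type) intertwining produces a single unitary $W$ with $\|W\phi(t_j)W^{*}-\psi_0(t_j)\|<\ep/8+O(\rho)+\eta_1$ for all $j$. Then $S_j:=W^{*}\psi_0(t_j)W$ commute, and $\|T_j-S_j\|<\ep$ provided $\rho$, and then $\dt$, are chosen small enough; the factor $8$ between the hypothesis and the conclusion comfortably accommodates the intertwining error together with the cleanup error below.

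\emph{The main obstacle.} The heart of the matter is the second absorption, carried out for the \emph{merely} $\eta$-multiplicative map $\phi$ rather than a genuine representation; equivalently, it amounts to producing a nearly commuting joint spectral decomposition of $H$ adapted to $(T_1,\dots,T_n)$ with small oscillation on each piece, and then correcting the resulting almost-orthogonal almost-idempotents into honest projections. This is precisely the point at which $C([-1,1]^n)$ fails to be weakly semiprojective once $n\ge 3$, and where the counterexamples of the final section reside: a finite-multiplicity ``chunk'' of the joint spectrum sitting at distance $\ge\ep/8$ from $Y$ can carry a genuine $K$-theoretic obstruction (not, in general, a Fredholm index when $n\ge 3$) to any such decomposition. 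The hypothesis $d_H(X,Y)<\ep/8$ forbids exactly such chunks — every approximate joint eigenspace is, to within $\ep/8$, an essential one — so $\phi$ is ``essentially absorbing'' and the obstruction vanishes. Turning this into a quantitative absorption theorem, with constants depending only on $n$ and $\ep$ and not on $H$ (most plausibly bootstrapped from the first, unconditional absorption, which plants a large genuine summand $\psi_0$ inside $\phi$), is the substantial work; the functional-calculus estimates tying together $\dt$, $\eta$ and the moduli of continuity of $\mathcal F$, and the bookkeeping of the intertwining errors, are routine. (For $n=1$ the statement is vacuous, a single self-adjoint operator commuting with itself, consistent with the hypothesis playing no role there.)
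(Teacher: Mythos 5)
There is a genuine gap here, and you flag it yourself: the conditional absorption $W'(\psi_0\oplus\phi)(t_j)(W')^{*}\approx\psi_0(t_j)$ for a merely $\eta$-multiplicative $\phi$, with tolerance controlled by $d_H(X,Y)$ and constants uniform in $H$, is exactly where the whole difficulty lives, and you do not prove it — you call it ``the substantial work.'' What makes the claim nontrivial is that no Voiculescu-type argument applies directly to approximate homomorphisms: to absorb $\phi$ into $\psi_0$ you would essentially need to know already that $\phi$ is approximately a genuine homomorphism away from a finite-rank piece sitting near $Y$, which is what the theorem is trying to establish. The proposed bootstrap from the first, unconditional absorption does not close the circle, because planting a large genuine summand $\psi_0$ inside $\phi$ does nothing to repair the non-multiplicativity of the complementary part, and for $n\ge 3$ the lack of weak semiprojectivity of $C(\I^n)$ means that correction step cannot be done by bare-hands perturbation of almost-idempotents.

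The paper's engine is in fact different, precisely at the point where your sketch goes silent. After planting a model $\sum_k e_j(\lambda_k)p_k$ inside each $T_j$ with the $\lambda_k$ a fine net in the essential synthetic spectrum $Y$ and the $p_k$ infinite-rank projections (Lemma \ref{Ldig}, via the Choi--Effros lifting theorem and a quasi-central approximate identity — not a Voiculescu statement for approximate maps), the paper invokes a Gong--Lin uniqueness theorem (Theorem \ref{Tgl99}, a variant of Theorem 3.1 of \cite{GL}): because $K_i(B(H)\otimes C(Y))=0$ for every compact metric $Y$ by Cuntz--Higson, the nearly-multiplicative c.p.c.\ map $L$ and a trivial point evaluation $\Phi_0$ become approximately unitarily equivalent after adding a homomorphism $\sigma$ with \emph{finite-dimensional} range. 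The hypothesis $d_H(X,Y)<\ep/8$ then enters not through an absorption of all of $\phi$, but through absorption of the much smaller $\sigma$: its finitely many eigenvalues all lie within roughly $\ep/8$ of the net $\{\lambda_k\}\subset Y$, so $\sigma$ can be regrouped and folded into the infinite-rank model $\sum_k e_j(\lambda_k)p_k$. Your intuition about what the hypothesis is for — killing a finite-rank chunk sitting far from $Y$ — is exactly right, but realizing it quantitatively requires the $KK$-triviality of $B(H)$ together with the Gong--Lin theorem; this cannot be bypassed by approximate two-sided intertwining of a map that is not yet known to be close to a $*$-homomorphism.
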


\noindent
(See Remark \ref{Rcomp} for the case that each $T_j$ is compact.)

Here   $s{\rm Sp}^\eta((T_1, T_2, ...,T_n))$ and 
$s{\rm Sp}_{ess}^\eta((T_1, T_2,...,T_n))$ are   the  sets of $\eta$-synthetic-spectrum, 
and essential $\eta$-synthetic-spectrum, respectively
(see Definition \ref{Dpseudosp}).
In \eqref{TTmul00-1}, $d_H(X,Y)$ is the Hausdorff distance between the two compact subsets 
of $\I^n,$ the $n$-dimensional unit cube.  In the special case 
that $s{\rm Sp}^{\ep/8}_{ess}((T_1, T_2, ...,T_n))$ is full, i.e., it contains the whole $\I^n,$
then the second part of \eqref{TTmul00-1} is automatically satisfied.

Remark \ref{Finalrem} provides some justification  of the  second condition 
related to macroscopic observables and  measurements, and a discussion of  
the Mumford problem.

More generally, let $H$ be a Hilbert (right) module over a \CA\, $A,$ $L(H)$ be 
the \CA\, of all bounded module maps on $H$  with adjoints and let $F(H)$ be the linear span 
of rank one module maps $T$ of the form $T(x)=z\la y,x\ra $ (for all $x\in H$), where 
$y, z\in H.$  Denote by $K(H)$ the closure of $F(H)$ which is a 
\CA.   We also have the following:

\begin{thm}\label{TTTmodule}
Let $n\in \N$ and  $\ep>0.$  There exists $\dt(n, \ep)>0$ satisfying the following:
Suppose that  $H$ is a countably generated Hilbert-module over a $\sigma$-unital  purely infinite simple \CA\,
$A$ and  $T_1, T_2,...,T_n\in L(H)$ are self-adjoint bounded module maps with $\|T_i\|\le 1$ 
($1\le i\le n$) such that
\beq\label{TTmul000-1}
\|T_iT_j-T_jT_i\|<\dt,\,\,\, 1\le i,j\le n.
\eneq
Suppose also

\noindent 
 (i)   $K(H)=L(H),$
 or 
 
 \noindent
(ii)   $K(H)\not=L(H),$ and 
$
d_H(X, Y)<\ep/8,$
where $X=s{\rm Sp}^{\ep/8}(T_1, T_2,...,T_n)$ and\\ $Y=s{\rm Sp}^{\ep/8}((\pi_c(T_1), \pi_c(T_2),...,\pi_c(T_n))),$
and $\pi_c: L(H)\to L(H)/K(H)$ is the quotient map.

Then there are  $S_1, S_2,...,S_n\in L(H)_{s.a.}$
such that
\beq
S_iS_j=S_jS_i,\,\,\, 1\le i,j\le n\tand \|T_j-S_j\|<\ep,\,\,\, 1\le j\le n.
\eneq
\end{thm}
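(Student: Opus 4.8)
The plan is to reduce Theorem \ref{TTTmodule} to Theorem \ref{TTTmul}, or more precisely to the method behind it, by exploiting the structural theory of Hilbert modules over $\sigma$-unital purely infinite simple \CA s. The starting observation is that by Kasparov's stabilization theorem, a countably generated Hilbert module $H$ over $A$ embeds as a complemented submodule of the standard module $H_A=\ell^2(A)$, and when $K(H)=L(H)$ the module $H$ is finitely generated projective, so $L(H)\cong p M_\infty(A) p$ for a projection $p$; in case (ii) one has the genuine corona-type quotient $L(H)/K(H)$, which plays the role that the Calkin algebra $B(H)/\mathcal K$ played in Theorem \ref{TTTmul}. The key point is that for $A$ purely infinite simple and $\sigma$-unital, the algebra $K(H)$ is again purely infinite simple (or a matrix algebra over such), $L(H)$ is properly infinite, and the quotient $L(H)/K(H)$ inherits enough of the $K$-theoretic and projection-comparison properties (real rank zero-type behavior, weak divisibility, surjectivity of the index map) that made the original argument run.

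The steps, in order, would be as follows. First I would establish the algebraic framework: realize $L(H)$ and $K(H)$ concretely via the stabilization theorem, verify that $K(H)$ is purely infinite simple (in case (i) finite-dimensional-over such, i.e.\ $L(H)=K(H)$ is a matrix algebra over a purely infinite simple algebra) and that $L(H)$ is its multiplier-type or matrix algebra. Second, I would transcribe the approximation machinery underlying Theorem \ref{TTTmul} into this abstract setting: the proof of \ref{TTTmul} does not really use $B(H)$ qua operators on a Hilbert space, but rather uses (a) that $B(H)$ is properly infinite with $B(H)/\mathcal K$ a simple algebra whose comparison theory is understood, and (b) a semiprojectivity / local approximation statement for the universal \CA\ generated by $n$ almost-commuting self-adjoint contractions — this is where the synthetic-spectrum condition and the Hausdorff closeness $d_H(X,Y)<\ep/8$ enter, allowing one to lift a commuting $n$-tuple from the quotient $L(H)/K(H)$ after matching up (essential) spectra. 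Third, in case (i), where there is no quotient to speak of, $L(H)=K(H)=pM_\infty(A)p$ with $A$ purely infinite simple: here the relevant fact is that $pM_\infty(A)p$ has the same ``absorbing'' properties as $B(H)$ itself (it is properly infinite, has stable rank one failing but real-rank-zero-type projection abundance), so the matrix argument of \cite{Linmatrices} combined with an Elliott-intertwining / one-sided approximate-unitary-equivalence step yields the commuting $S_j$ directly, with no index obstruction because $K_0$-valued indices vanish automatically in a properly infinite algebra of this kind — this is why condition (ii)'s spectral hypothesis is simply dropped in case (i).

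The main obstacle I expect is Step 2: verifying that the specific homotopy-lifting / semiprojectivity input used for $B(H)$ — namely that one can perturb an $n$-tuple of almost-commuting self-adjoints in $L(H)$ to an exactly commuting one once the obstruction in $L(H)/K(H)$ is removed — genuinely survives the passage from the Calkin algebra to the abstract corona $L(H)/K(H)$. Concretely, the delicate ingredient is the quasidiagonality-type or ``absorbing a commuting tuple'' argument: in $B(H)$ one uses that $\mathcal K$ is an essential ideal with a well-behaved approximate unit of projections and that the quotient map admits a completely positive splitting with good multiplicativity on finite sets; in the module setting one must check that $K(H)\trianglelefteq L(H)$ has the analogous property, which follows from $A$ being purely infinite simple (so $K(H)$ has an approximate unit of projections and $L(H)$ is its ``multiplier-like'' algebra in the sense of corona extendibility), but the uniform control of $\dt(n,\ep)$ independent of $A$ and $H$ requires care. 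I would handle this by isolating the finitely many relations involved, applying a semiprojectivity statement for the relevant universal \CA\ (as in the classification-program literature cited, e.g.\ \cite{LinTAMS99}), and then invoking the uniqueness/existence theorems for maps into purely infinite simple \CA s to absorb and correct; once that is in place, the conclusion $S_iS_j=S_jS_i$ with $\|T_j-S_j\|<\ep$ follows by the same bookkeeping as in Theorem \ref{TTTmul}.
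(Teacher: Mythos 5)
Your high-level reduction is the same as the paper's: use Kasparov's stabilization and his $L(H)\cong M(K(H))$ theorem to identify $K(H)$ with a hereditary subalgebra of $B\otimes{\cal K}$ (hence purely infinite simple), split into the case $K(H)$ unital (so $L(H)=K(H)=pM_\infty(B)p$ is unital purely infinite simple) and the case $K(H)$ non-unital (so $L(H)/K(H)$ is a Calkin-like corona), and then verify that the module setting has the structural properties needed to run the $B(H)$ argument. The paper's proof of this theorem (item \ref{47}) is in fact nothing more than that verification, because the core approximation argument was already packaged as the abstract Theorem \ref{TTmul} (the $(R,r,b,T,s)$/real-rank-zero-ideal version in Section 4), so that Theorem \ref{TTTmodule} (ii) follows by citing Kasparov, Zhang's real-rank-zero and pure-infiniteness results, Cuntz--Higson, and Proposition \ref{Pkkr}. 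You do not appear to realize this, and instead propose to ``transcribe the machinery''---unnecessary work once the abstract theorem is in hand.

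There are, however, two concrete errors in your choice of ingredients, precisely in the places you single out as the main obstacles. First, in case (i) you invoke ``the matrix argument of \cite{Linmatrices}'', but that result is specific to a \emph{pair} of almost-commuting self-adjoint matrices ($n=2$); for $n\ge 3$ it is false even for matrices (Proposition \ref{PLast}, or Theorem 4.1 of \cite{GL2}). What actually makes case (i) go through for arbitrary $n$ is that $L(H)$ is unital purely infinite simple and one can apply Corollary 1.20 of \cite{Linalm97} (repackaged in the paper as Theorem \ref{s2-T1}), which approximates an almost-commuting $n$-tuple by a commuting one in any unital purely infinite simple \CA\ with no hypothesis on synthetic spectra. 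Second, you frame the middle step as applying ``a semiprojectivity statement for the relevant universal \CA''. The universal \CA\ generated by $n$ commuting self-adjoint contractions is $C(\I^n)$, and for $n\ge 2$ this is not (even weakly) semiprojective---the Voiculescu and Gong--Lin counterexamples are exactly the obstruction. The tool that actually replaces semiprojectivity here is Gong--Lin's stable uniqueness theorem (Theorem 3.1 of \cite{GL}, quoted as Theorem \ref{Tgl99}), used together with the ``digonalization'' Lemma \ref{Ldig} that peels off a finite-rank-like piece over the ideal; your ``isolating finitely many relations'' sketch does not recover this. So the skeleton of your plan is correct but the two load-bearing lemmas you would reach for are the wrong ones.
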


While we are mostly interested in the case that $n>2,$ we also looks the special case when 
$n=2.$ 
For a pair of self-adjoint operators, 
we have a more definite answer
for some Hilbert modules.

\begin{thm}\label{MT-pair}
Let $\ep>0.$ There exists $\dt(\ep)>0$ satisfying the following:
Suppose that\\
(i) $H$ is an infinite dimensional separable  Hilbert space, or \\
(ii) $H$ is a countably generated Hilbert module over a $\sigma$-unital  purely infinite simple \CA\, $A$
such that  $L(H)$ has real rank zero, or \\
  (iii) $H$ is  a countably generated Hilbert module over a $\sigma$-unital simple
\CA\, $B$ of stable rank one such that $L(H)$ has real rank zero and $L(H)/K(H)$ is simple.

Suppose  also
that $T_1, T_2\in L(H)$ 
are self-adjoint bounded module maps on $H$  with $\|T_i\|\le 1$ ($i=1,2$) such that
\beq\label{MT-pair-00}
\|T_1T_2-T_2T_1\|<\dt\tand \kappa_1(\lambda-\pi(T_1+iT_2))=0
\eneq
for all $\lambda\not\in s{\rm Sp}^\dt((\pi(T_1), \pi(T_2))),$
where $\pi: L(H)\to L(H)/K(H)$ is the quotient map.
Then there are self-adjoint bounded module maps $S_1, S_2\in L(H)$
such that 
\beq
S_1S_2=S_2S_1\tand \|S_j-T_j\|<\ep,\,\,\, j=1,2.
\eneq
Moreover, the condition that $L(H)$ has real rank zero can be removed if we assume 
that\\ $s{\rm Sp}^\dt((\pi(T_1), \pi(T_2)))$ is connected.
\end{thm}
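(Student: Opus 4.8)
The plan is to reduce the problem to a known $K$-theoretic criterion for lifting a single normal element with prescribed essential spectrum, then glue the pieces together. Set $T = T_1 + iT_2$, so that $\|TT^*-T^*T\| < 2\dt$ and $T$ is within $2$ of the unit disc in norm; finding a commuting pair $(S_1,S_2)$ within $\ep$ of $(T_1,T_2)$ is equivalent to finding a normal $N \in L(H)$ with $\|N - T\|$ small. The image $\pi(T)$ in the quotient $Q := L(H)/K(H)$ is, up to $2\dt$, a normal element whose spectrum is (approximately) $s{\rm Sp}^\dt((\pi(T_1),\pi(T_2)))$ viewed as a subset of $\C \cong \R^2$; after a small perturbation inside $Q$ we may assume $\pi(T)$ is exactly normal. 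In cases (i)--(iii), $Q$ is a unital, purely infinite (or at least suitably large) simple \CA\ — for (i) it is the Calkin algebra, for (ii) the quotient of $L(H)$ by $K(H)$ is purely infinite simple, for (iii) it is assumed simple and, being a quotient of a real-rank-zero algebra by its ideal in the stable-rank-one setting, is again nicely structured. The hypothesis $\kappa_1(\lambda - \pi(T)) = 0$ for $\lambda$ outside $s{\rm Sp}^\dt$ says precisely that the (generalized Fredholm) index obstruction vanishes on every bounded component of the complement of the essential spectrum.

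The main body of the argument is then to run an Anderson--Brown--Pedersen--Schultz / Lin-type lifting: since the index obstruction vanishes, $\pi(T)$ lifts to an element $T' \in L(H)$ with $\|T' - T\|$ small and with $T'$ \emph{itself} normal modulo $K(H)$ in a controlled way, and more importantly we can arrange a lift whose spectrum essentially coincides with that of $\pi(T)$. At this point I would invoke the real-rank-zero hypothesis on $L(H)$ (cases (i), (ii)): in a unital \CA\ of real rank zero, an element that is normal modulo a closed ideal with vanishing index data is close to a normal element of the algebra — this is the module/real-rank-zero analogue of the matrix result quoted in the introduction (\cite{Linmatrices}, and its operator-algebraic generalizations \cite{LinTAMS99}, \cite{Linann03}), applied here with the ideal $K(H)$. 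Concretely: choose an increasing approximate identity of projections $\{p_m\}$ in $K(H)$ (available by real rank zero), write $T$ in block form, and inductively correct the off-diagonal coupling between $p_m H$ and its complement so that in the limit the corrections are summable of total norm $<\ep/2$ and produce a genuinely normal $N$; the index-zero hypothesis is exactly what lets each correction step succeed without accumulating a topological obstruction. Finally set $S_1 = (N+N^*)/2$ and $S_2 = (N-N^*)/2i$.

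For the "moreover" clause, where $L(H)$ need not have real rank zero but $s{\rm Sp}^\dt((\pi(T_1),\pi(T_2)))$ is connected: connectedness of the essential spectrum means the complement of the essential spectrum in $\I^2$ is connected (it has no bounded component), so the index hypothesis is vacuous, and one can instead use a direct functional-calculus / path-connectedness argument. Here I would realize the essential-spectrum constraint by conjugating $\pi(T)$ to a standard normal element supported on a connected set and then lift without needing projections: connectedness allows one to connect $T$ to a normal element through a norm-continuous path of almost-normal elements, and a partition-of-unity argument over a fine grid on $\I^2$ (using that $Q$ absorbs $\mathcal O_\infty$-type summands in cases (ii)--(iii), and using a Weyl--von Neumann argument in case (i)) builds the approximant directly.

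The step I expect to be the main obstacle is the second one — producing, from the vanishing of $\kappa_1$, an actual normal lift in $L(H)$ rather than merely a normal element in the quotient. This is where the genuine content of the almost-commuting-matrices circle of ideas enters: one must control the perturbation quantitatively ($\dt$ depending only on $\ep$, uniformly over all admissible $H$ and $A$), which forces a careful inductive scheme and an appeal to weak (semi)projectivity-type stability of the relations defining a normal element with a given spectrum. The bookkeeping needed to keep the cumulative perturbation below $\ep$ while simultaneously respecting the spectral constraint encoded by $s{\rm Sp}^\dt$ is the delicate part; everything else (reduction to a single normal element, passage to the quotient, the trivial "moreover" case under connectedness) is comparatively routine given the machinery cited in the introduction.
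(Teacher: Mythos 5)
Your overall reduction is sound: pass to $T=T_1+iT_2$, observe $\pi(T)$ is close to a normal element in the simple purely infinite quotient $Q=L(H)/K(H)$, and interpret the $\kappa_1$-hypothesis as vanishing of the generalized Fredholm index across the bounded holes of the essential synthetic-spectrum. That much matches the paper's set-up, which factors through the more general Theorem~\ref{MT-pair+} with $A=L(H)$, $J=K(H)$.

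However, the heart of your argument — the lift from $Q$ to $L(H)$ — diverges from the paper and has a genuine gap. You propose a Weyl--von Neumann style inductive block correction along an increasing approximate unit of projections in $K(H)$, absorbing off-diagonal pieces summably. The paper instead does a one-shot argument: it first replaces $\pi(T)$ in $Q$ by a normal element with spectrum a $1/k$-brick combination $Y$ (Corollary~\ref{C1C}) with the same $K_1$-data, shows $[\psi]=0$ in $KK(C(Y),Q)$ using the index hypothesis, approximates by a normal element $r_0$ with \emph{finite} spectrum in $Q$ (Theorem~\ref{T-1fs}), lifts the finitely many spectral projections to $L(H)$ (Lemma~\ref{Lprojlift}, which needs real rank zero or the $K_0$-class control), and is then reduced to a genuinely compact almost-normal corner where property (IR) — via Theorem 4.4 of \cite{FR}, the abstract form of the almost-commuting-matrices theorem — finishes the job. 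Your inductive scheme would have to re-derive all of this: the off-diagonal corrections produce finite blocks that are still only almost normal and must each be corrected by Lin's theorem with errors accumulating uncontrolled across infinitely many blocks, and it is precisely not clear that the index-zero hypothesis alone lets each correction step succeed without a finer bookkeeping of $K$-theory classes (this is what the reduction to finite spectrum in $Q$ and the explicit projection lifting accomplish). You flagged this step as the hard one, which is honest, but as written it is not an argument.

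Your explanation of the ``moreover'' clause is also incorrect. You assert that connectedness of $s{\rm Sp}^\dt((\pi(T_1),\pi(T_2)))$ forces its complement in $\I^2$ to be connected, hence the index hypothesis is vacuous. That is false: an annulus is connected, yet its complement has a bounded component, and the index hypothesis is genuinely needed there. The actual point of the connectedness assumption has nothing to do with removing the index hypothesis; it is used to remove the \emph{real rank zero} hypothesis. Specifically, if the spectrum is connected, then the brick combination $Y$ can be taken connected, and the ``Moreover'' part of Theorem~\ref{T-1fs} lets one arrange the finite spectral projections $p_k$ of $r_0$ to have $[p_k]=0$ or $[p_k]=[1]$ in $K_0(Q)$; these particular classes lie in the image of $K_0(L(H))$ and can therefore be lifted to projections in $L(H)$ via Lemma~\ref{Lprojlift} without any real rank zero assumption. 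You have misidentified which hypothesis connectedness replaces and why.
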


The second condition in \eqref{MT-pair-00} means that a Fredholm index vanishes (see Definition \ref{Dind}).
Note that both cases (ii) and  (iii)  generalize case (i). 
In case (ii), $K(H)$ is stable (whenever $H$ is not a finitely generated projective Hilbert module),
and $L(H)/K(H)$ is simple (analogous to the Calkin algebra).  
Moreover, in case (ii),  
$L(H)/K(H)$ retains its simiplicity while 
$L(H)$ has real rank zero.  

Case (i) in Theorem \ref{MT-pair} is a  more definite result  than that of  Theorem \ref{TTTmul}
for $n=2.$  In fact, 
we prove a slightly more general result (see Theorem \ref{MT-pair+} below).
One should also compare (i) in Theorem \ref{MT-pair} to Theorem 1.1 
of \cite{KS} (see Remark \ref{RR}).   Theorem 1.1 of \cite{KS}
provides a more  definitive estimate of ${\rm dist}(a, N_f(A)),$ 
the distance from an element 
to the set of normal elements with finite spectrum in \CA s with real rank zero. 
However, 
when  $K_1(K(H))\not=0,$  normal elements $N=S_1+iS_2$ may not be 
approximated by normal elements  in $K(H)$ with finite spectrum. 
Thus,  cases (ii) and (iii) in Theorem \ref{MT-pair} cover scenarios where 
almost normal elements are close to normal elements that cannot be approximated 
normal elements with finite spectra --- complementing the results in \cite{KS}. 

We further demonstrate   that 
the vanishing  index  condition for the 
operator pair  is necessary. 
Given 
well-known instablity of the 
spectrum (and essential spectrum) of bounded operators under perturbations, it becomes essential to 
consider some version 
of $\dt$-spectrum (typically  larger than the spectrum).  For $n\ge 2,$ we introduce the concept of 
$\dt$-synthetic-spectrum for $n$-tuples of self-adjoint elements in a unital \CA\, 
(see 
Definition \ref{Dpseudosp}). 
%
%
We will also show that,  unfortunately, the  version  of Mumford's problem  formulated  above has a negative solution in general--- 
even the associated Fredholm index vanishes when $n>2.$

In quantum mechanics, one often encouters  the statement,  ``when the commutators go to zero, we recover 
the classical system".   If ``recover" means the observables are approximated by commuting 
observables, then
example in (iii) of Chapter 14 of \cite{Mf2} (Example 4.6 of \cite{FR}),  Proposition \ref{Lnn-0}  and Proposition \ref{PLast} all reveal significnat topological obstructions to such claims.
Nevertheless Theorem \ref{TTTmul}, interpreted as  
in Remark \ref{Finalrem}, may  offer 
an alternative perpective.

The paper is organized as follows.
Section 2 contains some notations used in the paper and 
gives the definition of $\dt$-synthetic-spectrum and $\dt$-near-spectrum for $n$-tuples 
of self-adjoint operators, and discusses the relationship between them (see also Proposition \ref{Pesspsp}). 
Section 3 is a study of $n$-tuples of almost commuting self-adjoint elements 
in a unital purely infinite simple \CA. 
Section 4 presents the proof of  Theorem \ref{TTTmul} and Theorem \ref{TTTmodule}.
In Section 5, we will discuss  the case of a pair of almost commuting self-adjoint  bounded module 
maps and provide a proof of 
 Theorem \ref{MT-pair}.
 %
%
In section 6,  the last section, 
we show why the Fredholm index does not appear in the statement of Theorem \ref{TTTmul}.
We then present an example that an $n$-tuple version of Theorem \ref{MT-pair} 
does not hold in general. A final remark was added which  contains the justification of 
the second condition in \eqref{TTmul00-1} and its connection to the Mumford original question.

\section{Preliminaries}

\begin{df}\label{D1}
Let $A$ be a \CA. Denote by $A_{s.a.}$ the set of all self-adjoint elements in $A.$
Denote by $M(A)$ the multiplier algebra (the idealizer of $A$ in $A^{**}$). 
If $x, y\in A$ and $\ep>0,$ we write 
\beq
x\approx_\ep y,\,\,\, {\rm if}\,\,\, \|x-y\|<\ep.
\eneq

For unital \CA\, $A,$ dnote by $GL(A)$ and $GL_0(A),$ the group of invertible elements and 
the path connected component of $GL(A)$ containing the identity of $A,$ respectively.
\end{df}

\begin{df}\label{Dpic}
Let $H$ be an infinite dimensional separable Hilbert space.
Denote by $B(H)$ the \CA\, of all bounded linear operators on $H$ 
and by ${\cal K}$ the \CA\, of all compact operators on $H.$
Denote by $\pi_c: B(H)\to B(H)/{\cal K}$ the quotient map. 

Let $T\in B(H).$ Recall that the essential spectrum of $T,$ ${\rm sp}_{ess}(T)={\rm sp}(\pi_c(T)),$
is the spectrum of $\pi_c(T).$
\end{df}

\begin{df}\label{Dmodule}
Let $A$ be a \CA\, and $H$ a Hilbert (right)  module  over $A$ (or Hilbert $A$-module; see \cite{Kas}).
Denote by $L(H)$ the \CA\, of all bounded module 
maps on $H$ with adjoints.  A rank-one module map 
$T\in L(H)$ is a bounded module map of the form 
$T(x)=z\la y,x\ra $ for all $x\in H$ (and fixed $y, z\in H$), where 
$\la \cdot, \cdot\ra$ is the $A$-valued inner product on $H.$ 
Denote by $F(H)$ the linear span of rank-one module maps and $K(H)$
the (norm) closure of $F(H).$  Following Theorem 1 of \cite{Kas}, 
we identify $L(H)$ with $M(K(H)).$
\end{df}

\begin{df}\label{Dind}
Let $A$ be a unital \CA\, and $x\in A$ be an invertible element.
Define $u_x=x(x^*x)^{-1/2}.$ Note that $u_x$ is a unitary.
Define 
\beq
\kappa_1(x)=[u_x]\in K_1(A).
\eneq
Then $\kappa_1(x)=0$ if and only if $x\in GL_0(A),$ when $K_1(A)=U(A)/U_0(A).$
Let $T\in B(H)$ be such that $\pi_c(T)$ is invertible in $B(H)/{\cal K}.$
Then 
\beq
{\rm Ind}(T)=\kappa_1(\pi_c(T))=[u_{\pi_c(T)}]\,\,\, {\rm in}\,\,\, K_1(B(H)/{\cal K})\cong\Z.
\eneq
\end{df}

\begin{df}
In $\R^n,$ by ${\rm dist}(x,y),$ we mean the euclidian distance between $x$ and $y,$
i.e., $\|x-y\|_2.$
Denote by $\I^n=\{(r_1, r_2, ...,r_n)\in \R^n: |r_i|\le 1\}.$ If $\eta>0$ and $x\in \I^n,$ define\\
$B(x, \eta)=\{y\in \I^n: {\rm dist}(x, y)<\eta\}.$
Let $e_0(\xi)=1$ for all $\xi\in \I^n$ be the constant function,
$e_i\in C(\I^n)$ be defined by
$e_i((r_1, r_2,...,r_n))=r_i$ for $(r_1, r_2,...,r_n)\in \I^n,$ $i=1,2,...,n.$
{\bf This notation will be used throughout this paper.}
Note that $C(\I^n)$ is generated by $\{e_i: 0\le i\le 1\}.$


\end{df}

\begin{df}\label{n-net}
Fix $M\ge 1.$
Fix an integer $k\in \N.$   Define
%
$$P_k=\{\xi=(x_1,x_2,...,x_n)\in \R^n: x_j=m_j/k,  |m_j|\le M k, m_j\in \Z,\,\,1\le j\le n\}.$$
$P_k$ has only finitely many points.  Most of the time, we choose $M=1.$
\end{df}

\begin{df}\label{Dpseudosp}
Let $n\in \N$ and $M>0.$ 
In what follows,  for each $0<\eta<1,$ we  choose a fixed integer $k=k(\eta)\in \N$
such that $k=\inf\{l\in \N: (M+1)/l<{\eta\over{1+2\sqrt{n}}}\}.$ Denote $D^\eta=P_k.$
We write $D^\eta=\{x_1,x_2,...,x_m\}.$ Then $D^\eta$ is $\eta/2$-dense  in $M^n=\{(t_1,t_2,...,t_n)\in \R^n: 
|t_i|\le M\}.$
Moreover, $D^\eta\subset D^\dt$ if $0<\dt<\eta.$

Let $A$ be a unital \CA\, and $(a_1,a_2,...,a_n)$ be an $n$-tuple  of self-adjoint elements in 
$A$ with $\|a_i\|\le M$ ($1\le i\le n$). 
Fix $\xi=(\lambda_1, \lambda_2,...,\lambda_n)\in \I^n.$
Let 
$\theta_{\lambda_i,\eta}\in C([-M, M])$ be such that 
$0\le \theta_{\lambda_i, \eta}\le 1,$ $\theta_{\lambda_i, \eta}(t)=1,$ if $|t-\lambda_i|\le 3\eta/4,$ 
$\theta_{\lambda_i,\eta}(t)=0$
if $|t-\lambda_i|\ge  \eta,$  and $\theta_{\lambda_i \eta}$ is linear in $(\lambda_i-\eta, \lambda_i-3\eta/4)$
and in $(\lambda_i+3\eta/4, \lambda_i+\eta),$
 $i=1,2,...,n.$ 
 Define, for $(t_1, t_2,...,t_n)\in M^n,$
\beq
\Theta_{\xi, \eta}(t_1,t_2,...,t_n)&=&\prod_{i=1}^n\theta_{\lambda_i, \eta}(t_i)\andeqn\\\label{Dpsp-5}
\Theta_{\xi, \eta}(a_1,a_2,...,a_n)&=&\theta_{\lambda_1,\eta}(a_1)\theta_{\lambda_2,\eta}(a_2)\cdots 
\theta_{\lambda_n,\eta}(a_n).
\eneq
Note that  we do not assume that $a_1, a_2,...,a_n$ mutually commute and  the product in \eqref{Dpsp-5} has 
a fixed order.

For $x_j\in  D^\eta,$  write $x_j=(x_{j,1}, x_{j,2},...,x_{j,n})\in D^\eta.$
We may  sometime write $\theta_{j,i, \eta}:=\theta_{x_{j,i},\eta}$ and 
$\Theta_{j, \eta}:=\Theta_{x_j, \eta},$ $1\le i\le n,$ $j=1,2,...,m.$
%
%
Set 
\beq
s{\rm Sp}^\eta((a_1,a_2,..., a_n)=
\bigcup{}_{_{\|\Theta_{x_j, \eta}(a_1,a_2,...,a_n)\|\ge 1-\eta}} \overline{B(x_j, \eta)}. 
\eneq
This  union of finitely many closed balls
$s{\rm Sp}^\eta((a_1,a_2,...,a_n))$ is called $\eta$-synthetic-spectrum of the $n$-tuple 
$(a_1,a_2,...,a_n)$ which, by the definition, is compact.  

Let $0<d<\eta.$ Suppose $x_j\in D^\eta.$ Then $x_j\in D^d.$
Note that $\theta_{j,i, d}\le \theta_{j, i,\eta}$ ($1\le i\le n$). 
Hence $\Theta_{j, d}\le \Theta_{j,\eta}.$ 
It follow that, if $\|\Theta_{j,d}(a_1,a_2,...,a_n)\|\ge 1-d,$ then 
$\|\Theta_{j,\eta}(a_1,a_2,...,a_n)\|\ge \|\Theta_{j,d}(a_1,a_2,...,a_n)\|\ge 1-d>1-\eta.$
Then, if $0<d<\eta,$
\beq\label{pSp<}
s{\rm Sp}^{d}((a_1,a_2,...,a_n))=\bigcup{}_{_{\|\Theta_{j, \eta}(a_1,a_2,...,a_n)\|\ge 1-\eta}} \overline{B(x_j, \eta)}
\subset s{\rm Sp}^\eta((a_1, a_2,...,a_n)).
\eneq
Note also that, if $d\ge 2\dt,$  then
\beq\label{pSpdteta}
\{x\in M^n: {\rm dist}(x,s{\rm Sp}^{\dt/2}((a_1, a_2,...,a_n)))<\dt/2\}\subset s{\rm Sp}^{d}((a_1,a_2,...,a_n)).
\eneq
%
To see this, 
let ${\rm dist}(x, s{\rm Sp}^{\dt/2}((a_1, a_2,...,a_n)))<\dt/2.$ 
There is $y_i\in D^{\dt/2}$ with $\|\Theta_{y_i,\dt/2}(a_1,a_2,...,a_n)\|\ge 1-\dt/2$
such that $x\in \overline{B(y_i, \dt/2)}.$
There is $x_j\in D^{d}$ such that ${\rm dist}(x_j, y_i)<d/2.$
Write $y_i=(\lambda_1,\lambda_2,...,\lambda_n)$ and 
$x_j=(\lambda_1', \lambda_2',...,\lambda_n').$
If $|t_l-\lambda_l|<\dt/2,$ then 
$$|t_l-\lambda_l'|\le \dt/2+|\lambda_l-\lambda_l'|<\dt/2+d/2<3d/4,$$ $1\le l\le n.$ Therefore
$\Theta_{y_i, \dt/2}\le \Theta_{x_j, d}.$ 
Hence $\|\Theta_{j, 2\dt}(a_1,a_2,...,a_n)\|\ge 1-\dt/2\ge 1-d.$ 
Therefore $x\in B(x_j, \dt+\dt/2)\subset s{\rm Sp}^{d}((a_1,a_2,...,a_n)).$

In what follows, we are particularly interested in the case that $M=1.$

If $A=B(H)$ for some infinite dimensional separable Hilbert space, 
define 
\beq
s{\rm Sp}^\eta_{ess}((a_1,a_2,...,a_n))=s{\rm Sp}^\eta((\pi_c(a_1), \pi_c(a_2),...,\pi_c(a_n)))
\eneq
which will be called essential $\eta$-synthetic-spectrum.
It is clear that 
\beq
s{\rm Sp}^\eta_{ess}((a_1,a_2,...,a_n))\subset s{\rm Sp}^\eta((a_1,a_2,...,a_n)).
\eneq
\end{df}

\begin{df}\label{Dcpc} 
Let $A$ and $B$ be \CA s.  A linear map $L: A\to B$
is a c.p.c. map if it is completely positive and contractive.

\end{df}

\begin{df}\label{Dappsp}
Let $A$ be a  unital  \CA\, and $a_1,a_2,...,a_n\in A_{s.a.}$ for some $n\in \N.$
Let us assume that $\|a_i\|\le 1,$ $i=1,2,...,n.$ 
Fix $0<\eta<1/2.$ Suppose  that $X\subset \I^n$ is a compact subset 
and $L: C(X)\to A$ is a unital c.p.c. map
such that 
\beq
%
\hspace{-2.1in}{\rm (i)} &&\|L(e_j|_X)-a_j\|<\eta,\,\,\, 1\le j\le n, \\
\hspace{-2.1in}{\rm (ii)}&&\|L((e_ie_j)|_X)-L(e_i|_X)L(e_j|_X)\|<\eta, \,\,\, 1\le i,j\le n,\andeqn\\
\hspace{-2.1in}{\rm (iii)} &&  \|L(f)\|\ge 1-\eta
\eneq
for any $f\in C(X)_+$  which has 
value $1$ on an open ball with the center $x\in X$ (for some $x$) and the radius $\eta.$
Then we say that $X$ is an $\eta$-near-spectrum of  the $n$-tuple 
$(a_1, a_2,...,a_n).$ 
We may write 
$$
X=nSp^\eta((a_1,a_2,...,a_n))
$$
for convenince. The reader is recommended to read Remark \ref{Rsp} for clarification.

If $L$ is, additionally,  a unital {\it \hm,}  then  we say $X$ is 
an $\eta$- spectrum of the $n$-tuple $(a_1,a_2,...,a_n).$

Let $H$ be an infinite dimensional separable Hilbert space and let 
$T_1, T_2,...,T_n\in B(H)_{s.a.}.$ 
If $X$ is an $\eta$-spectrum of $(\pi_c(T_1), \pi_c(T_2),...,\pi_c(T_n)),$
we say that $X$ is an  essential $\eta$-spectrum for $(T_1, T_2,...,T_n).$
In this case we write 
$$
Sp_{ess}^\eta((a_1,a_2,...,a_n)):=X.
$$

\end{df}

\begin{rem}\label{Rsp}   Here are some clarifications.
%

1. 
It should be noted that $nSp^\dt((a_1, a_2,...,a_n))$ is not uniquely determined and 
depends on the choice of the c.p.c.~map.  
Writing $X=nSp^\dt((a_1,a_2,...,a_n))\not=\emptyset$ {\it simply means that there exists a   non-empty 
compact subset $X$ and a unital c.p.c. map $L: C(X)\to A$ satisfying  conditions (i), (ii) and (iii) of Definition \ref{Dappsp}.}
This convention is adopted for notational convenience.
Moreover, if we write $n{\rm Sp}^\eta((a_1, a_2,...,a_n))=\emptyset,$
we mean that no such $X$ and $L$ exist which satisfy (i), (ii) and (iii).

%
%

2. 
However, we  will later show that $\dt$-near spectra are {\it unique} up to an $\eta$- neighborhood when the
$n$-tuple selfadjoint operators are almost commuting within $\dt$ as elaborated in Proposition \ref{Pspuniq}.
%

3.
On the other hand, 
the $\dt$-synthetic-spectrum is always  uniquely defined and---importantly--
computable in the context of computations.
Their relationship is described in Proposition \ref{Pspuniq} below (see also Proposition \ref{Pesspsp}).

4. Let $X$ be an $\eta$-near-spectrum  and 
$Y$ be  a $\dt$-near -spectrum  for $(a_1,a_2,...,a_n),$  respectively.
Suppose that $\dt<\eta,$ then, by the definition, 
$Y$ is also an $\eta$-near-spectrum. 
 In particular, if $nSp^\dt((a_1,a_2,...,a_n))\not=\emptyset,$ then 
$nSp^\eta((a_1,a_2,...,a_n))\not=\emptyset.$

5.
In practice, computing the spectrum of an operator 
$T_1+i T_2$ (where $T_1, T_2\in B(H)_{s.a.}$) may be more challenging  than computing  the norm 
of $\Theta_{j, \eta}((T_1, T_2)).$ 
%
\end{rem}

\begin{prop}\label{Pappsp}
Fix $n\in \N.$ For any $\eta>0,$ there exists $\dt(n,\eta)>0$ satisfying the following:
Suppose that $A$ is a  unital \CA\, and $a_i\in A_{s.a.}$ with $\|a_i\|\le 1,$ $1\le i\le n,$
such that
\beq
\|a_ia_j-a_ja_i\|<\dt,\,\,\, 1\le i,j\le n.
\eneq
Then 

(1) 
$X:=s{\rm Sp}^\eta((a_1,a_2,...,a_n))\not=\emptyset$ and,

(2) $Y:=nSp^\eta((a_1,a_2,...,a_n))\not=\emptyset,$  i.e., there exists 
a non-empty compact subset $Y\subset \I^n$ and a c.p.c. map
$L: C(Y)\to A$ satisfies condition (i), (ii) and (iii) in Definition \ref{Dappsp}.

%
%
\end{prop}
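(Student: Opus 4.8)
The plan is to prove both parts simultaneously by a single compactness/ultraproduct argument, exploiting the fact that the two objects in question ($\eta$-synthetic-spectrum and $\eta$-near-spectrum) are, morally, "the set of points where a character-like map on $C(\I^n)$ survives." First I would fix $n$ and $\eta$ and argue by contradiction: suppose no $\dt(n,\eta)$ works, so there is a sequence of unital \CA s $A_k$ and $n$-tuples $a^{(k)}=(a_1^{(k)},\dots,a_n^{(k)})$ of self-adjoint contractions with $\|a_i^{(k)}a_j^{(k)}-a_j^{(k)}a_i^{(k)}\|\to 0$, yet for each $k$ either $s{\rm Sp}^\eta(a^{(k)})=\emptyset$ or $nSp^\eta(a^{(k)})=\emptyset$. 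Passing to a subsequence, one of the two failures happens infinitely often; handle that case. Form the C$^*$-algebra $B=\prod_k A_k / \bigoplus_k A_k$ (the norm-ultraproduct-type quotient, or an actual free-ultrafilter ultraproduct), and let $a_i\in B$ be the class of $(a_i^{(k)})_k$. Since the commutators tend to $0$, the $a_i$ mutually commute in $B$; being self-adjoint contractions, they generate a commutative unital \CA\ isomorphic to $C(Z)$ for some nonempty compact $Z\subset\I^n$, namely $Z$ is the joint spectrum of $(a_1,\dots,a_n)$.

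For part (2): pick any point $z_0\in Z$ and let $X=\{z_0\}$, or more usefully $X=Z$ itself; the evaluation (restriction) map $C(\I^n)\to C(Z)=C^*(a_1,\dots,a_n)\subset B$ is a unital $*$-homomorphism, hence certainly a unital c.p.c.\ map, and it lifts to a sequence of unital c.p.c.\ maps $L_k\colon C(\I^n)\to A_k$ (using e.g.\ the Choi–Effros lifting theorem, or just functional calculus composed with a continuous partition-of-unity approximation since $C(\I^n)$ is nuclear and the domain is commutative). Restricting $L_k$ to $C(X)$ for a suitable compact $X$ close to $Z$, one checks that for all large $k$ the three conditions (i)–(iii) in Definition~\ref{Dappsp} hold with tolerance $\eta$: (i) and (ii) follow because $L_k(e_i)\to a_i$ and $L_k$ is asymptotically multiplicative on the commuting generators; (iii) follows because if $f\in C(X)_+$ is $1$ on a ball of radius $\eta$ around some point of $X$, then $\|L(f)\|$ is bounded below by evaluating the corresponding element of $C(Z)$ and using that $Z$ meets that ball (after taking $X$ to be, say, the $\eta/2$-neighbourhood of $Z$ intersected with $\I^n$, so that $X$ is a genuine $\eta$-near-spectrum). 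This contradicts $nSp^\eta(a^{(k)})=\emptyset$ for all large $k$.

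For part (1): I would relate $s{\rm Sp}^\eta$ to the joint spectrum $Z$ of the limit tuple. For a lattice point $x_j\in D^\eta$, the element $\Theta_{x_j,\eta}(a_1^{(k)},\dots,a_n^{(k)})$ converges (in $B$, as $k\to\infty$) to $\Theta_{x_j,\eta}(a_1,\dots,a_n)=\prod_i\theta_{x_{j,i},\eta}(a_i)$, which under the isomorphism $C^*(a_1,\dots,a_n)\cong C(Z)$ is just the function $\prod_i\theta_{x_{j,i},\eta}(e_i)|_Z=\Theta_{x_j,\eta}|_Z$. Since $D^\eta$ is $\eta/2$-dense in $\I^n$ and $Z$ is nonempty, there is a lattice point $x_j$ within $\eta/2$ of some point of $Z$; at that point $\Theta_{x_j,\eta}|_Z$ equals $1$ (because $\theta_{x_{j,i},\eta}\equiv 1$ on the $3\eta/4$-ball, which contains the $\eta/2$-ball), so $\|\Theta_{x_j,\eta}|_Z\|_{C(Z)}=1>1-\eta$. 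Hence for all large $k$, $\|\Theta_{x_j,\eta}(a^{(k)})\|\ge 1-\eta$, which puts $\overline{B(x_j,\eta)}$ into $s{\rm Sp}^\eta(a^{(k)})$, contradicting emptiness. The main obstacle I anticipate is purely bookkeeping in part (2): ensuring that the lifted c.p.c.\ maps $L_k$ can be taken \emph{unital} and that the chosen compact set $X$ (which should not depend on $k$, or can be allowed to depend on $k$ as long as it is nonempty) simultaneously satisfies all of (i)–(iii) with the \emph{same} $\eta$ — this forces a careful choice, e.g.\ taking $X$ to be a fixed small closed neighbourhood of the limiting spectrum $Z$ and shrinking the working tolerance from $\eta$ to $\eta/3$ in the intermediate estimates. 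Everything else is a routine limiting argument in the ultraproduct.
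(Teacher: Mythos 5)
Your proposal is correct and follows essentially the same route as the paper: argue by contradiction, form the sequence algebra $\prod_k A_k/\bigoplus_k A_k$, observe that the induced limits $a_i$ commute and generate $C(Z)$ for a nonempty compact $Z\subset\I^n$, and then for (1) push the norm of $\Theta_{x_j,\eta}|_Z=1$ back down to the $A_k$'s via the quotient map, and for (2) use the Choi--Effros lifting theorem to pull the embedding $C(Z)\hookrightarrow B/\bigoplus A_k$ back to asymptotically multiplicative c.p.c.\ maps $L_k$ and check conditions (i)--(iii) of Definition~\ref{Dappsp} for large $k$. The only minor deviation is that the paper takes $X=Z$ directly in part (2) (together with a fixed finite $\eta/4$-dense subset and bump functions $f_j$) rather than a small neighbourhood of $Z$, but this is inessential bookkeeping.
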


\begin{proof}
Suppose that the lemma is false. 
Then we obtain an $\eta>0$ and a  sequence of unital \CA s $\{A_k\}$ and 
a sequence of $n$-tuples $\{a_{j,k}\}\subset \{(A_k)_{s.a.}\}$ with $\|a_{j,k}\|\le 1$
($1\le j\le n$), $k\in \N$ such that
\beq
\lim_{k\to\infty}\|a_{i,k}a_{j,k}-a_{j,k}a_{i,k}\|=0,\,\,\, 1\le j\le n,
\eneq
and, 
for case (1),
$s{\rm Sp}^\eta((a_{1, k}, a_{2,k},...,a_{n,k}))=\emptyset,$ $k\in \N,$
(or, for case (2),
$nSp^\eta((a_{1,k},a_{2,k},...,a_{n,k}))=\emptyset,$ $k\in \N.$)

Consider $B=\prod_kA_k$ and quotient map $\Pi: B\to B/\bigoplus_k\{A_k\}).$ 
Let $s_j=\Pi(\{a_{j,k}\}),$ $j=1,2,...,n.$
Then $s_is_j=s_js_i,$ $1\le i,j\le n.$
Let $C$ be the \SCA\, of $B/\bigoplus_k A_k$ generated by $s_0=1, s_1,s_2,...,s_n.$
Then $C=C(X_0)$ for some non-empty compact subset $X_0$ of $\I^n.$ 
Let $\phi: C(X_0)\to B/\bigoplus_kA_k$ be the monomorphism given by $C$
with
 $\phi(e_j|_X)=s_j,$ $j=0,1,2,...,n.$

To show (1),
consider $D^\eta=\{x_1,x_2,..., x_m\}$ (see Definition \ref{Dpseudosp}).
Fix $x\in X_0.$ Choose $x_j\in D^\eta$ such that   ${\rm dist}(x_j, x)<\eta/2,$ 
then $\|\Theta_{x_j, \ep}|_{X_0}\|=1.$ Hence, since $\phi$ is a monomorphism,  
\beq
\|\phi(\Theta_{x_j,\eta}|_{X_0})\|=1.
\eneq
Recall that $\Pi(\{\Theta_{x_j,\eta}(a_{1,k}, a_{2,k},...,a_{n,k}))=\phi(\Theta_{x_j,\eta}|_X).$ Hence
there   is an infinite subset $S\subset  \N$ such that, for all $k\in S,$
\beq
\|\Theta_{x_j,\eta}(a_{1,k}, a_{2,k},...,a_{n,k})\|\ge 1-\eta.
\eneq
It follows that  $X_0\subset \bigcup_{\|\Theta_{x_j,\ep}(a_{1,k},a_{2,k},...,a_{n,k})\|\ge 1-\eta}B(x_j, \eta)$
for all $k\in S.$
In other words, $(a_{1,k}, a_{2,k},...,a_{n,k})$ has $\eta$-synthetic-spectrum containing $X_0\not=\emptyset$ for all $k\in S.$  This contradicts ``$s{\rm Sp}^\eta((a_{1, k}, a_{2,k},...,a_{n,k}))=\emptyset,$ $k\in \N$". Thus (1) follows.

For (2), 
we obtain, by  the Choi-Effros Theorem  (\cite{CE}),  a c.p.c.  map
$L: C(X_0)\to B$ such that $\Pi\circ L=\phi.$ 
Write $L=\{L_k\},$ where each $L_k: C(X_0)\to A_k$ is a c.p.c map, $k\in \N.$
Then 
\beq\label{Pappsp-10}
\lim_{k\to\infty}\|L_k(e_j|_{X_0})-a_{j,k}\|=0.
\eneq

Let 
$\{x_1, x_2,...,x_l\}\subset X_0$ be an $\eta/4$-dense subset, and 
$f_1, f_2,...,f_l\in C(X_0)_+$ with $0\le f_j\le 1,$ $f_j(x)=1$ if ${\rm dist}(x, x_j)\le \eta/4$
and $f_j(x)=0$ if ${\rm dist}(x, x_j)\ge \eta/2,$ $j=1,2,...,l.$

Since $\Pi\circ L=\phi$  is injective, 
there exists a subsequence $\{k(l)\}\subset \N$ such that
\beq\label{Pappsp-11}
\|L_{k(l)}(f_j)\|\ge (1-\eta/4),\,\,\,j=1,2,...,m.
\eneq
Let  $f\in C(X_0)_+$ and $x\in X_0$  be such that $f(y)\ge 1$ for all $y\in \overline{B(x, \eta)}.$
Since $\{x_1,x_2,...,x_l\}$ is $\eta/4$-dense in $X_0,$ 
there is $x_j$ such that ${\rm dist}(x, x_j)<\eta/4.$   Then $f\ge f_j.$ 
It follows that 
\beq\label{Pappsp-12}
\|L_{k(l)}(f)\|\ge \|L_{k(l)}(f_j)\|\ge (1-\eta/4),\,\,\, j=1,2,...,m.
\eneq
Thus, since $\{e_j|_{X_0}: 0\le j\le n\}$
generates $C(X_0),$ by \eqref{Pappsp-10} and \eqref{Pappsp-12},
 for all large $k(l),$ $X_0$ is an $\eta$-near-spectrum of $(a_{1,k},a_{2,k},...,a_{n,k}).$
A contradiction. 
\end{proof}

\begin{lem}\label{Lpurb}
Let $X$ be a compact metric space and ${\cal G}\subset C(X)$ be 
a finite generating subset.

Then, for any $\ep>0$ and any finite subset ${\cal F}\subset C(X),$ 
there exists $\dt({\cal F}, \ep)>0$ satisfying the following:
Suppose that $A$ is a unital \CA\, and $\phi_1, \phi_2: C(X)\to A$ are 
unital c.p.c. maps such that
\beq
\|\phi_1(g)-\phi_2(g)\|<\dt \andeqn 
\|\phi_i(gh)-\phi_i(g)\phi_i(h)\|<\dt \tforal g, h\in {\cal G},\,\,\, i=1,2.
\eneq
Then, for all $f\in {\cal F},$ 
\beq
\|\phi_1(f)-\phi_2(f)\|<\ep.
\eneq
Moreover, if $X\subset \I^n$ and  ${\cal G}=\{1, e_1|_X,e_2|_X,...,e_n|_X\},$
then, for any $\ep>0$ and 
any finite subset  ${\cal H}\subset C(\I^1),$ 
we may also require
that, for all $h\in {\cal H},$ 
\beq
\|\phi_1(h(e_j)|_X)-h(\phi_1(e_j|_X))\|<\ep,\,\,\, 1\le j\le n.
\eneq
\end{lem}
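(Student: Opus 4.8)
The plan is to prove this by a standard compactness (diagonal sequence) argument, exactly in the spirit of the proof of Proposition \ref{Pappsp} just given. Suppose the first assertion fails. Then there exist $\ep_0>0$, a finite subset $\mathcal F\subset C(X)$, a fixed $f\in\mathcal F$, a sequence of unital \CA s $\{A_k\}$, and pairs of unital c.p.c. maps $\phi_{1,k},\phi_{2,k}: C(X)\to A_k$ with
\beq
\|\phi_{1,k}(g)-\phi_{2,k}(g)\|\to 0 \tand \|\phi_{i,k}(gh)-\phi_{i,k}(g)\phi_{i,k}(h)\|\to 0
\eneq
for all $g,h\in\mathcal G$ and $i=1,2$, but $\|\phi_{1,k}(f)-\phi_{2,k}(f)\|\ge\ep_0$ for all $k$. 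Form $B=\prod_k A_k$, the quotient $\Pi: B\to B/\bigoplus_k A_k$, and the induced maps $\Phi_i=\Pi\circ(\{\phi_{i,k}\}_k): C(X)\to B/\bigoplus_k A_k$. Each $\Phi_i$ is a unital c.p.c. map, and by the hypotheses $\Phi_i$ is multiplicative on $\mathcal G$; since $\mathcal G$ generates $C(X)$ as a \CA, a routine induction (products and norm-limits of polynomials in $\mathcal G$) shows $\Phi_i$ is a unital $*$-homomorphism on all of $C(X)$. Moreover $\Phi_1(g)=\Phi_2(g)$ for all $g\in\mathcal G$, hence $\Phi_1=\Phi_2$ on $C(X)$ by the same generation argument. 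In particular $\Phi_1(f)=\Phi_2(f)$, i.e. $\lim_k\|\phi_{1,k}(f)-\phi_{2,k}(f)\|=0$, contradicting $\|\phi_{1,k}(f)-\phi_{2,k}(f)\|\ge\ep_0$.

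For the ``moreover'' clause, with $X\subset\I^n$ and $\mathcal G=\{1,e_1|_X,\dots,e_n|_X\}$, suppose the additional conclusion fails: there are $\ep_0>0$, $h\in\mathcal H\subset C(\I^1)$, some index $j$, and sequences $A_k$, $\phi_k=\phi_{1,k}$ as above (only $\phi_1$ is needed here) with $\|\phi_k(h(e_j|_X))-h(\phi_k(e_j|_X))\|\ge\ep_0$. Passing to $B/\bigoplus_k A_k$ as before, $\Phi:=\Pi\circ\{\phi_k\}$ is a unital $*$-homomorphism, so $\Phi(e_j|_X)$ is a self-adjoint element with spectrum contained in $[-1,1]$, and $\Phi(h(e_j|_X))=h(\Phi(e_j|_X))$ by the continuous functional calculus. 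On the other hand $\Pi(\{h(\phi_k(e_j|_X))\}_k)=h(\Phi(e_j|_X))$ as well — this uses that $h$ is a uniform limit of polynomials on $[-1,1]$ together with the fact that $\{\phi_k(e_j|_X)\}_k$ has uniformly bounded norm and $\Pi$ is a $*$-homomorphism. Hence $\lim_k\|\phi_k(h(e_j|_X))-h(\phi_k(e_j|_X))\|=0$, a contradiction.

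The main technical point — really the only thing that needs care — is the passage ``multiplicative on a generating set $\implies$ genuinely multiplicative'' in the quotient $B/\bigoplus_k A_k$, which relies on the generating set being finite so that, given $f$, one approximates $f$ uniformly by a single polynomial $p$ in the generators and pushes the approximation through the (contractive, unital) maps $\Phi_i$; the error terms then vanish in the quotient because each $\Phi_i$ is exactly multiplicative and exactly $*$-preserving there. I do not expect the quantitative constant $\dt(\mathcal F,\ep)$ to require any explicit estimate: the compactness argument produces it automatically, and this is consistent with how $\dt$ is extracted throughout the paper (cf. the proof of Proposition \ref{Pappsp}). One small bookkeeping remark: the two displays in the hypothesis involve only the finite set $\mathcal G$, so finitely many conditions are tested at each stage, which is what makes the product-algebra construction legitimate.
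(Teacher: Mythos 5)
Your proof is correct and takes essentially the same route as the paper's own proof: a sequence/ultraproduct argument passing to $\prod_k A_k / \bigoplus_k A_k$, where the limiting maps become genuine $*$-homomorphisms that agree on the generators and hence everywhere, contradicting the supposed uniform failure. The only implicit step you and the paper both lean on — that a unital c.p.c. map multiplicative on a self-adjoint generating set is a $*$-homomorphism (the multiplicative-domain argument) — is standard and treated as known in the source as well.
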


\begin{proof}
Suppose the lemma is false. Then there exist $\ep_0>0,$ 
a finite subset ${\cal F}\subset C(X)$ and 
a sequence of unital \CA s $\{A_n\},$ and two sequences 
of unital c.p.c. maps 
$\phi_{1,n}, \phi_{2, n}: C(X)\to A_n$ such that
\beq\label{Lpur-5}
&&\lim_{n\to\infty}\|\phi_{1,n}(g)-\phi_{2, n}(g)\|=0\rforal g\in {\cal G},\\\label{Lpur-6}
&&\lim_{n\to\infty}\|\phi_{i,n}(gh)-\phi_{i,n}(g)\phi_{i,n}(h)\|=0\rforal g,h\in {\cal G},\,\,i=1,2,\andeqn\\\label{Lpur-7}
&&\max\{\|\phi_{1,n}(f)-\phi_{2,n}(f)\|: f\in {\cal F}\}\ge \ep_0\rforal n\in \N.
\eneq
Moreover,  in the case that  $X\subset \I^n$ and ${\cal G}=\{e_j|_X: 0\le j\le n\},$ 
there exists $\ep_1$ and a finite subset ${\cal H}_0$ such that  
\beq
\sup\{\|\phi_{1,n}(h(e_j))-h(\phi_{1,n}(e_j))\|: h\in {\cal H}_0,1\le j\le n\}\ge \ep_1
\eneq
for all $n\in \N.$

Put  $B=\prod_nA_n.$ Let $\Pi: B\to B/\bigoplus_nA_n$ the  quotient map. 
Then, since ${\cal G}$ is a generating set of $C(X),$ by \eqref{Lpur-6},
 $\psi_i:=\Pi\circ (\{\phi_{i,n}\}): C(X)\to B/\bigoplus_nA_n$ is a unital \hm,
$i=1,2,$ and by \eqref{Lpur-5}, 
$\psi_1=\psi_2.$ 
Hence 
\beq
\psi_1(f)=\psi_2(f)\rforal f\in C(X).
\eneq
But this implies that, for all sufficiently large $n,$
\beq
\|\phi_{1,n}(f)-\phi_{2,n}(f)\|<\eta_0/2\rforal f\in {\cal F}.
\eneq
This leads a contradiction to \eqref{Lpur-7}.  The first part of the lemma then follows.

For the second part, since $\psi_1$ is a unital \hm,
for any $h\in {\cal H}_0,$
\beq
\psi_1(h(e_j|_X))=h(\psi_1(e_j|_X)),\,\,\, 1\le j\le n.
\eneq
It follows that, for all large $n$ and $h\in {\cal H}_0,$ 
\beq
\|\phi_{1,n}(h(e_j|_X))-h(\phi_{1,n}(e_j|_X)\|<\ep_1,\,\,\,  1\le j\le n.
\eneq
Another contradiction. So the lemma follows.
\end{proof}

\begin{cor}\label{Ctheta}
Fix $n\in \N$ and $0<\eta<1.$ 
There exists $\dt(n, \eta)>0$ satisfying the following:
Suppose that $A$ is a unital \CA,
$a_1, a_2,...,a_n\in A_{s.a.}$ with $\|a_i\|\le 1$ ($1\le i\le n$), $Y\subset \I^n$ is a 
compact subset,  and $L: C(Y)\to A$ is a unital c.p.c. map
such that
\beq\label{Ctheta-1}
\|L(e_ie_j|_Y)-L(e_i|_Y)L(e_j|_Y)\|<\dt\andeqn \|L(e_i|_Y)-a_i\|<\dt, \,\,\, i,j=1,2,...,n.
\eneq
Then, for  $D^\eta=\{x_1, x_2,...,x_m\},$
\beq
\|L(\Theta_{x_j, \eta}|_Y)-\Theta_{x_j, \eta}(a_1,a_2,...,a_n)\|<\eta, \,\,\, 1\le j\le m.
\eneq
\end{cor}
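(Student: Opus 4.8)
The plan is to derive this from Lemma \ref{Lpurb} by an appropriate choice of the two c.p.c.\ maps. The key observation is that $\Theta_{x_j,\eta}$ is, by construction, a polynomial-type function in the coordinate functions $e_1|_X,\dots,e_n|_X$: namely $\Theta_{x_j,\eta}|_X = \prod_{i=1}^n \theta_{j,i,\eta}(e_i|_X)$, where each $\theta_{j,i,\eta}\in C([-1,1])$ is one of finitely many fixed continuous functions determined by $D^\eta$ (which itself depends only on $n$ and $\eta$). So $\Theta_{x_j,\eta}|_X$ lies in a finite subset ${\cal F}$ of $C(\I^n)$ that depends only on $n$ and $\eta$, and the claim is precisely a statement comparing $L$ evaluated on ${\cal F}$ with the ``functional calculus'' value $\Theta_{x_j,\eta}(a_1,\dots,a_n)=\theta_{j,1,\eta}(a_1)\cdots\theta_{j,n,\eta}(a_n)$.

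The concrete execution is as follows. First I would fix $n$ and $\eta$, let $D^\eta=\{x_1,\dots,x_m\}$, and collect the finite set ${\cal H}_0=\{\theta_{j,i,\eta}: 1\le i\le n,\ 1\le j\le m\}\subset C([-1,1])$ together with the finite set ${\cal F}=\{\Theta_{x_j,\eta}|_X: 1\le j\le m\}\subset C(\I^n)$ (strictly, ${\cal F}$ depends on $X$, but the relevant quantitative input from Lemma \ref{Lpurb} only needs ${\cal F}$ through the modulus of continuity of the fixed functions $\theta_{j,i,\eta}$, which is uniform in $X$ since $X\subset \I^n$; alternatively one runs the compactness argument directly). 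Next, apply Lemma \ref{Lpurb} with $X$, the generating set ${\cal G}=\{1, e_1|_X,\dots,e_n|_X\}$, the finite set ${\cal F}$ above, the additional finite set ${\cal H}_0\subset C(\I^1)$, and tolerance $\eta/2$ (say): this produces $\dt_1(n,\eta)>0$ such that whenever $\phi_1,\phi_2:C(X)\to A$ are unital c.p.c.\ maps that agree to within $\dt_1$ on ${\cal G}$ and are each $\dt_1$-multiplicative on ${\cal G}$, then $\|\phi_1(f)-\phi_2(f)\|<\eta/2$ for all $f\in{\cal F}$, and moreover $\|\phi_1(h(e_j))-h(\phi_1(e_j))\|<\eta/2$ for all $h\in{\cal H}_0$ and all $j$.

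The remaining point is to manufacture the second map $\phi_2$. Given the hypothesis \eqref{Ctheta-1} on $L$ (with $\dt$ to be chosen), we would like a unital c.p.c.\ map $\phi_2:C(X)\to A$ that is genuinely multiplicative enough to realize honest functional calculus $e_i|_X \mapsto a_i$. Here one uses the standard fact that an $n$-tuple of self-adjoint contractions that is sufficiently almost commuting (which $(a_1,\dots,a_n)$ is, for $\dt$ small, by \eqref{Ctheta-1} combined with $\|L(e_i|_X)-a_i\|<\dt$ and the multiplicativity estimate, giving $\|a_ia_j-a_ja_i\|<3\dt+2\dt$-type bounds) admits, after possibly shrinking $\dt$, a unital c.p.c.\ map from $C$ of a nearby compact set — or more simply: one takes $\phi_2$ to be any unital c.p.c.\ extension to $C(X)$ of the almost-multiplicative assignment $e_i|_X\mapsto a_i$ obtained by restricting the functional-calculus-type map, and checks directly that $\phi_2$ is $\dt'$-multiplicative on ${\cal G}$ with $\dt'\to 0$ as $\dt\to 0$ and $\phi_2(e_i|_X)=a_i$; then $\phi_2(\Theta_{x_j,\eta}|_X)$ is within $\eta/2$ of $\theta_{j,1,\eta}(a_1)\cdots\theta_{j,n,\eta}(a_n)$ using the ${\cal H}_0$-part of the conclusion of Lemma \ref{Lpurb} applied to $\phi_2$. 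Combining, $\|L(\Theta_{x_j,\eta}|_X)-\Theta_{x_j,\eta}(a_1,\dots,a_n)\|<\eta$. I expect the main obstacle to be precisely this step: cleanly producing $\phi_2$ and verifying that ``$\phi_2(\Theta_{x_j,\eta}|_X)$ equals the ordered product $\theta_{j,1,\eta}(a_1)\cdots\theta_{j,n,\eta}(a_n)$ up to small error'' — one must be careful that $C(X)$ is commutative so $\phi_2$ cannot be an honest homomorphism onto the non-commuting $C^*(a_1,\dots,a_n)$, so the argument is genuinely quantitative and the cleanest route is probably just to re-run the compactness/ultraproduct argument of Lemma \ref{Lpurb} directly with one map being $L$ and the other being a c.p.c.\ lift built from the limiting commuting tuple, rather than to invoke the lemma as a black box. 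Everything else (the combinatorics of $D^\eta$, the modulus-of-continuity bookkeeping) is routine.
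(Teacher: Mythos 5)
Your plan correctly identifies the key ingredients — the factorization $\Theta_{x_j,\eta}|_X=\prod_i\theta_{j,i,\eta}(e_i|_X)$, the finiteness and $X$-independence of ${\cal H}_0\subset C(\I^1)$, and the ``Moreover'' part of Lemma \ref{Lpurb} — but it routes everything through a second map $\phi_2$, and that detour is where the gap lives. Your primary version requires a unital c.p.c.\ map $\phi_2:C(X)\to A$ with $\phi_2(e_i|_X)=a_i$ \emph{exactly} and which is almost-multiplicative on ${\cal G}$. Nothing in the hypotheses provides this: there is no canonical c.p.c.\ extension of the partial assignment $1\mapsto 1$, $e_i|_X\mapsto a_i$ from $\mathrm{span}\{1,e_1|_X,\dots,e_n|_X\}$ to $C(X)$ (and in degenerate cases no extension at all — e.g.\ $e_i|_X$ could vanish while $a_i\neq 0$), and even granting some unital c.p.\ extension by Arveson, there is no reason for it to be almost-multiplicative on the products $e_ie_j|_X$. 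You sense the problem and propose re-running the ultraproduct argument ``with one map being $L$ and the other being a c.p.c.\ lift built from the limiting commuting tuple'', but this misdescribes the argument: in the limit $\Pi(\{L_k\})$ is \emph{itself} a homomorphism sending $e_i$ to the limiting commuting $a_i$'s, so nothing is being compared to a second sequence of maps. The fallback is a gesture, not a proof.

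The cleaner observation — and what the paper actually does — is that no $\phi_2$ is needed. Apply the ``Moreover'' part of Lemma \ref{Lpurb} with $\phi_1=\phi_2=L$: the agreement hypothesis on ${\cal G}$ becomes trivial, and the conclusion reads $\|L(\theta_{j,i,\eta}(e_i|_X))-\theta_{j,i,\eta}(L(e_i|_X))\|$ small. Combine this with (i) a $\dt$-controlled estimate showing that almost-multiplicativity of $L$ on ${\cal G}$ forces $\|L(\Theta_{x_j,\eta}|_X)-\prod_i L(\theta_{j,i,\eta}(e_i|_X))\|$ small, and (ii) uniform continuity of each $\theta_{j,i,\eta}$ giving $\|\theta_{j,i,\eta}(L(e_i|_X))-\theta_{j,i,\eta}(a_i)\|$ small from $\|L(e_i|_X)-a_i\|<\dt$. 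The chain $L(\Theta_{x_j,\eta}|_X)\approx\prod_i L(\theta_{j,i,\eta}(e_i|_X))\approx\prod_i\theta_{j,i,\eta}(L(e_i|_X))\approx\prod_i\theta_{j,i,\eta}(a_i)=\Theta_{x_j,\eta}(a_1,\dots,a_n)$ then proves the corollary, with $\dt$ depending only on $n$ and $\eta$. Your pieces (i) and (ii) are essentially already in your sketch; dropping the unconstructible $\phi_2$ and running the chain directly on $L$ closes the gap.
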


\begin{proof}
Let  $D^\eta=\{x_1,x_2, ...,x_m\}\subset \I^n$ be as defined in \ref{Dpseudosp},
where $x_j=(x_{j,1}, x_{j,2},...,x_{j,n}\}$ ($1\le j\le m$).
There is $\dt_1(\eta)>0$ 
satisfying the following: for any unital \CA\, $A$ and any 
pair of self-adjoint elements $a, b\in A_{s.a.}$ with $\|a\|, \|b\|\le 1,$
if $\|a-b\|<\dt_1,$ then 
\beq\label{Ctheta-3}
\|\theta_{j,i, \eta}(a)-\theta_{j,i,\eta}(b)\|<\eta/2(n+1),\,\,\, 1\le j\le m,
\eneq
where $\theta_{j,i,\eta}$ is defined in \ref{Dpseudosp}. 

Let ${\cal G}=\{e_i: 1\le i\le n\}\cup\{1\}$ and 
${\cal H}=\{\theta_{j,i,\eta}: 1\le i\le m, \,\, 1\le j\le n\}.$ 
For $X=\I^n,$ choose $\dt_2(n, \eta/4)>0$ such that (the second part of) Lemma \ref{Lpurb} holds for 
${\cal H},$ ${\cal G}$ above and 
$\eta/4(n+1)$ (in place of $\eta$).
Choose $\dt_3>0$ such that, for any unital c.p.c. map
$\Phi: C(Y)\to A$ (any unital \CA\, $A$),  that 
\beq
\|\Phi(e_ie_j|_Y)-\Phi(e_i|_Y)\Phi(e_j|_Y)\|<\dt_3\,\,\, (1\le i, j\le n)
\eneq
implies, for $1\le j\le m,$ 
\beq\label{Ctheta-4}
\|\Phi(\Theta_{x_j,\eta}|_Y)-\Phi(\theta_{j,1, \eta}|_Y)\Phi(\theta_{j,2, \eta}|_Y)\cdots \Phi(\theta_{j,n, \eta}|_Y)\|
<\eta/4.
\eneq
Choose $\dt=\min\{\dt_1, \dt_2, \dt_3\}.$ 

Suppose that $L$ and $\{a_i: 1\le i\le n\}$ satisfy the assumption of this lemma for 
$\dt.$ 
Since $\Theta_{x_j,\eta}=\theta_{j,1,\eta}\theta_{j,2,\eta}\cdots \theta_{j,n,\eta}$
and 
\beq\label{Ctheta-5-}
\Theta_{x_j,\eta}(b_1, b_2, ...,b_n)=\theta_{j,1,\eta}(b_1)\theta_{j,2,\eta}(b_2)\cdots \theta_{j,n,\eta}(b_n)
\eneq
for any $n$-tuple $(b_1,b_2,...,b_n)$ of self-adjoint elements in $A$ with $\|b_i\|\le 1$ ($1\le i\le n$),
by the choice of $\dt$ (and $\dt_1$) and  by the second part of \eqref{Ctheta-1} and \eqref{Ctheta-3}, we have,
for each $1\le j\le m,$ 
\beq\label{Ctheta-6}
\|\Theta_{x_j,\eta}(L(e_1|_X), L(e_2|_X), ..., L(e_n|_X))-\Theta_{x_j,\eta}(a_1, a_2,...,a_n)\|<\eta/2.
\eneq
By the choice of $\dt_3,$ 
we also have 
\beq\label{Ctheta-7}
\|L(\Theta_{x_j,\eta}|_X)-L(\theta_{j,1, \eta}(e_1|_X))L(\theta_{j,2, \eta}(e_2|_X))\cdots 
L(\theta_{j,n, \eta}(e_n|_X))\|
<\eta/4.
\eneq
Define $\tilde L: C(\I^n)\to A$ by 
$\tilde L(f)=L(f|_Y)$ for all $f\in C(Y).$ 
By the choice $\dt$ and applying (the second part of) Lemma  \ref{Lpurb} to 
$\tilde L_i=\tilde L,$ $i=1,2,$ 
we obtain
\beq
\|\tilde L(\theta_{j,i,\eta}(e_i))-\theta_{j,i,\eta}({\tilde L}(e_i))\|<\eta/4(n+1),\,\,\,1\le i\le n, \,\,\, 1\le j\le m.
\eneq
In other words,
\beq
\|L(\theta_{j,i,\eta}(e_i)|_Y)-\theta_{j,i,\eta}(L(e_i|_Y))\|<\eta/4(n+1),\,\,\,1\le i\le n, \,\,\, 1\le j\le m.
\eneq
Note that $\theta_{j,i,\eta}(e_i)|_Y=\theta_{j,i,\eta}(e_i|_Y).$
It follows that  (keeping in mind of \eqref{Ctheta-5-})
\beq\nonumber
&&\hspace{-1in}L(\theta_{j,1, \eta}(e_1|_Y))L(\theta_{j,2, \eta}(e_2|_Y))\cdots 
L(\theta_{j,n, \eta}(e_n|_Y))\\\label{Ctheta-8}
&&\hspace{1.3in}\approx_{n\eta\over{4(n+1)}}\Theta_{x_j,\eta}(L(e_1|_Y), L(e_2|_Y), ..., L(e_n|_Y)).
\eneq
Combining \eqref{Ctheta-8}  with \eqref{Ctheta-7}, we obtain
\beq\label{Ctheta-9}
\|L(\Theta_{x_j, \eta}|_Y)-\Theta_{x_j,\eta}(L(e_1|_Y), L(e_2|_Y), ..., L(e_n|_Y))\|<\eta/2,\,\,\, 1\le j\le n.
\eneq
We conclude that, by \eqref{Ctheta-9} and 
\eqref{Ctheta-6},
\beq
&&\hspace{-0.8in}\|L(\Theta_{x_j, \eta}|_Y)-\Theta_{x_j,\eta}(a_1, a_2,...,a_n)\|
\le \|L(\Theta_{x_j, \eta}|_Y)-\Theta_{x_j,\eta}(L(e_1|_Y), L(e_2|_Y), L(e_n|_Y))\|\\
&&\hspace{0.8in}+
\|\Theta_{j,\eta}(L(e_1|_Y), L(e_2|_Y), L(e_n|_Y))-\Theta_{x_j,\eta}(a_1, a_2,...,a_n)\|\\
&&\hspace{1.4in}<\eta/2+\eta/2<\eta.
\eneq

\end{proof}

\begin{df}\label{DHd}
Let $M$ be a metric space (we only consider the case that $M\subset \R^n$ is compact 
such as the case that $M=\I^n$).
Recall that the Hausdorff distance of two compact subsets of $X, Y\subset M$ 
is defined by
\beq
d_H(X, Y)=\max\{\sup_{x\in X} \{{\rm dist}(x, Y)\}, \sup_{y\in Y}\{{\rm dist}(y, X)\}\}.
\eneq
Let $F(M)$ be the set of all non-empty compact subsets of $M.$ 
Then $(F(M), d_H)$ is a compact metric space with the metric $d_H$ when $M$ is compact.
\end{df}

\begin{prop}\label{Pspuniq}
Fix $k\in \N.$  For any $\eta>0,$ there exits $\dt(k, \eta)>0$ satisfying the following:

(1) Suppose that $A$ is a unital \CA\, and $a_1, a_2,...,a_k\in A_{s.a.}$ with $\|a_i\|\le 1$ 
($1\le i\le k$)
such that $(a_1, a_2,...,a_k)$ has 
a non-empty $\dt$-near-spectrum $X=nSp^\dt((a_1,a_2,...,a_n)).$ 
If 
$Y$ is also a non-empty  $\dt$-near-spectrum of $(a_1,a_2,...,a_k),$  then
$Z=s{\rm Sp}^\eta((a_1,a_2,...,a_k))\not=\emptyset,$ 
and 
\beq
d_H(X, Y)<\eta\andeqn  X, Y\subset Z\subset X_{2\eta}.
\eneq

(2) Suppose,  in  addition to (1), $A$ has a (closed two-sided) ideal $J\subset A,$
$\pi: A\to A/J$ is the quotient map, 
and $\Omega_1=s{\rm Sp}^\eta((\pi(a_1), \pi(a_2),...,\pi(a_n))$ and $\Omega_2=nSp^\dt((\pi(a_1), \pi(a_2),...,\pi(a_n))\not=\emptyset.$
Then
\beq
\Omega_1\subset Z,\,\,\, \Omega_2\subset \Omega_1\subset (\Omega_2)_{2\eta}\andeqn \Omega_2\subset \overline{X_\eta}.
\eneq

\end{prop}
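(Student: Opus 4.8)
\textbf{Proof proposal for Proposition \ref{Pspuniq}.}

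The plan is to run two compactness-and-reduction arguments in parallel: first extract the quantitative $\delta(k,\eta)$ by a contradiction/ultraproduct argument reducing everything to the commutative situation, and then read off all six inclusions from the one model case where all elements genuinely commute. For the contradiction setup, I would suppose the statement fails for some fixed $k$ and $\eta$, producing a sequence of unital \CA s $A_n$, $k$-tuples $(a_{1,n},\dots,a_{k,n})$ with $\|a_{i,n}\|\le 1$, near-spectra $X_n=nSp^{1/n}((a_{1,n},\dots))$, $Y_n$ another $(1/n)$-near-spectrum, and (in case (1)) $Z_n=s{\rm Sp}^\eta((a_{1,n},\dots))\ne\emptyset$, such that one of $d_H(X_n,Y_n)<\eta$, $X_n\subset Z_n$, $Y_n\subset Z_n$, $Z_n\subset (X_n)_{2\eta}$ fails for every $n$. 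Passing to $B=\prod_n A_n$, $\Pi\colon B\to B/\bigoplus_n A_n$, and $s_i=\Pi(\{a_{i,n}\})$, the near-spectrum conditions (i) and (ii) of Definition \ref{Dappsp} force the $s_i$ to commute in the quotient, so they generate $C(W)$ for some compact $W\subset\I^k$; the c.p.c.\ maps $L_n\colon C(X_n)\to A_n$ and $L'_n\colon C(Y_n)\to A_n$ witnessing the near-spectra lift, after composing with $\Pi$, to $\ast$-homomorphisms $C(X_n), C(Y_n)\to C(W)$ (using that $\Pi\circ L$ is multiplicative on the generators, hence everywhere). By the Gelfand picture these are surjections dual to inclusions $W\hookrightarrow X_n$ up to small perturbation — more precisely, condition (iii) forces every point of $X_n$ (resp.\ $Y_n$) to be within $\eta$ of $W$, while $L_n(e_j|_X)\approx a_{j,n}$ forces $W$ itself to sit in the relevant spectra. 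This is the mechanism by which $X_n$ and $Y_n$ both collapse to within $\eta$ of the common set $W$.

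The key steps, in order: (a) set up the product-algebra contradiction and identify the limiting commutative model $C(W)$, as in the proofs of Proposition \ref{Pappsp} and Lemma \ref{Lpurb}; (b) show $W\subset s{\rm Sp}^\eta$ in the limit — given $x\in W$, pick $x_j\in D^\eta$ with ${\rm dist}(x_j,x)<\eta/2$ so $\|\Theta_{x_j,\eta}|_W\|=1$, hence (via Corollary \ref{Ctheta}, whose hypotheses hold for $L_n$ with constant tending to $0$) $\|\Theta_{x_j,\eta}(a_{1,n},\dots)\|\ge 1-\eta$ eventually, so $x\in \overline{B(x_j,\eta)}\subset Z_n$; (c) show conversely every ball $\overline{B(x_j,\eta)}$ appearing in $Z_n$ has center within $2\eta$ of $W$ — otherwise $\Theta_{x_j,\eta}$ vanishes on a neighborhood of $W$, so by Corollary \ref{Ctheta} again $\|\Theta_{x_j,\eta}(a_{1,n},\dots)\|$ is eventually $<1-\eta$, contradicting its inclusion; this gives $Z_n\subset W_{2\eta}\subset (X_n)_{2\eta}$ for large $n$; (d) show $X_n\subset W_\eta$ and $W\subset X_n$-neighborhood in the limit from conditions (iii) and (i) of Definition \ref{Dappsp} respectively, giving $d_H(X_n,W)<\eta$, and symmetrically $d_H(Y_n,W)<\eta$, whence $d_H(X_n,Y_n)<\eta$ by the triangle inequality in $(F(\I^k),d_H)$ — wait, this gives $2\eta$; to land at $\eta$ I would instead run the argument with $\eta/2$ in step (d), i.e.\ choose $\delta$ so small that the near-spectra are $(\eta/2)$-close to $W$, which is harmless since $\delta$ is ours to pick. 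Then $X_n,Y_n\subset W_{\eta/2}\subset Z_n{}_{\eta}$ — actually the cleaner route is $X_n,Y_n\subset W_{\eta/2}$ and $W\subset Z_n$ with $Z_n$ closed, combined with $Z_n\subset W_{2\eta}$, to extract exactly $X,Y\subset Z\subset X_{2\eta}$.

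For part (2), the same product-algebra $B$ carries the ideal $\prod_n J_n$ (taking $J_n=J$ constant when $A$ is fixed, or more honestly: here $A,J$ are fixed, so only a single algebra is involved and the argument is a direct application of part (1) to the pair $(A,\text{id})$ and to $(A/J,\text{quotient})$). The point is that $\pi$ is a surjective $\ast$-homomorphism, so $\|\Theta_{x_j,\eta}(\pi(a_1),\dots)\|\le\|\Theta_{x_j,\eta}(a_1,\dots)\|$, which immediately gives $\Omega_1=s{\rm Sp}^\delta((\pi(a_1),\dots))\subset s{\rm Sp}^\delta((a_1,\dots))\subset s{\rm Sp}^\eta((a_1,\dots))$; for the last inclusion I use the monotonicity \eqref{pSp<} already established, and then $s{\rm Sp}^\delta((a_1,\dots))\subset Z$ follows since $Z$ is also a $\delta$-near-spectrum after shrinking $\delta$, via part (1). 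The relation $\Omega_2\subset\Omega_1\subset(\Omega_2)_{2\eta}$ is exactly part (1) applied inside $A/J$. Finally $\Omega_2\subset\overline{X_\eta}$: a near-spectrum of $(\pi(a_1),\dots)$ lifts, using Choi--Effros on the quotient map, to a near-spectrum of $(a_1,\dots)$ with slightly worse constant, and by part (1) any two near-spectra of the latter are within $\eta$, so $\Omega_2\subset X_\eta$; closure is cosmetic.

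The main obstacle I expect is bookkeeping the constants so that the final inequalities come out with the stated numerology ($\eta$, not $C\eta$): the near-spectrum definition already has slack built into it (the $3\eta/4$ versus $\eta$ plateaus of $\theta_{\lambda,\eta}$, and the mismatch between $D^\eta$ being $\eta/2$-dense and balls of radius $\eta$), and one must track through Corollary \ref{Ctheta} which converts ``$L$ almost multiplicative'' into ``$\|L(\Theta_{x_j,\eta})-\Theta_{x_j,\eta}(a)\|<\eta$'' exactly. The honest way is to pick $\delta(k,\eta)$ to simultaneously satisfy Corollary \ref{Ctheta} for parameter $\eta/4$, Lemma \ref{Lpurb}, and a uniform continuity estimate on the $\theta_{\lambda,\eta}$, then verify each of the six inclusions uses only $\eta$-sized errors; there is nothing conceptually deep here but it is where a careless argument would produce $4\eta$ instead of $\eta$.
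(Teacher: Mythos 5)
Your proposal takes a genuinely different route from the paper. The paper's proof is a direct quantitative argument: it fixes the test functions $f_j$ and their $\eta/8$-net up front, invokes Lemma \ref{Lpurb} (with parameter $\eta/32$) and Corollary \ref{Ctheta} (with parameter $\eta/2$) to get an explicit $\dt$, and then derives each of the containments by a chain of norm estimates, never passing to a product algebra. You instead propose a compactness-and-contradiction argument, extracting a limiting commutative model $C(W)$ inside $B/\bigoplus A_n$ and comparing all of $X_n,Y_n,Z_n$ to $W$. That framework can be made to work for part (1), but note two places where it is less innocent than it looks: the c.p.c.\ maps $L_n$ have \emph{changing domains} $C(X_n)$, so ``$\Pi\circ\{L_n\}$'' does not literally parse — you must first precompose with the restrictions $r_n\colon C(\I^n)\to C(X_n)$ (and then use Lemma \ref{Lpurb} to propagate approximate multiplicativity from the generators $e_j|_{X_n}$ to finite subsets of $C(\I^n)$); and the points $x_n\in X_n$ one needs to control are moving targets while the balls in condition (iii) of Definition \ref{Dappsp} shrink at rate $1/n$, so a subsequence argument and a dominating fixed test function are required before the limit detects anything. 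Both are fixable, but they are precisely the kind of bookkeeping the paper's direct route avoids.

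There is, however, a real gap in your treatment of the inclusion $\Omega_2\subset\overline{X_\eta}$ in part (2). You claim that a near-spectrum map $\Phi\colon C(\Omega_2)\to A/J$ for $(\pi(a_1),\dots,\pi(a_k))$ lifts by Choi--Effros to a near-spectrum map for $(a_1,\dots,a_k)$ ``with slightly worse constant.'' It does not: if $\Psi\colon C(\Omega_2)\to A$ is a c.p.c.\ lift with $\pi\circ\Psi=\Phi$, then from $\|\Phi(e_j|_{\Omega_2})-\pi(a_j)\|<\dt$ you only conclude $\|\pi(\Psi(e_j|_{\Omega_2})-a_j)\|<\dt$, i.e.\ $\Psi(e_j|_{\Omega_2})-a_j$ is \emph{close to the ideal} $J$, not small in norm. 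Condition (i) of Definition \ref{Dappsp} for $(a_1,\dots,a_k)$ can fail by a full unit, so $\Psi$ need not be a near-spectrum map at all, and the appeal to part (1) in $A$ collapses. The paper argues in the opposite direction: it pushes the near-spectrum map $L_1\colon C(X)\to A$ for $(a_1,\dots,a_k)$ \emph{down} to $\pi\circ L_1\colon C(X)\to A/J$, which is automatically a near-spectrum-type map for $(\pi(a_1),\dots,\pi(a_k))$, compares $\pi\circ L_1$ with $\Phi$ via Lemma \ref{Lpurb}, and then uses $\|L_1(f_j)\|\ge\|\pi\circ L_1(f_j)\|$ to transport the lower bound back up to $A$. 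That one-sided estimate is the whole point; the quotient map gives you $\|\pi(x)\|\le\|x\|$ for free, whereas a Choi--Effros lift gives you no norm control at all on the difference $\Psi(e_j)-a_j$.
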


\begin{proof}
We may assume that $0<\eta<1.$
We first prove (1).

Let $\{x_1, x_2,..., x_m\}$  be an $\eta/8$-dense subset of $\I^n.$ 
Choose $f_j\in C(\I^n)_+$ with $0\le f_j\le 1,$
$f_j(x)=1$ if ${\rm dist}(x, x_j)\le \eta/4,$ and $f_j(x)=0,$ if 
${\rm dist}(x, x_j)\ge \eta/2,$ $j=1,2,...,m.$
Put ${\cal F}=\{f_j: 1\le j\le m\}.$ 

Note that $\{e_j: 0\le j\le n\}$ is a generating set of $C(\I^n).$
Let  $\dt_1:=\dt({\cal F}, \eta/32)>0$ be  given  by Lemma \ref{Lpurb} (for 
$\eta/32$ and ${\cal F}$ as well as $\I^n$) and 
$\dt_2:=\dt(n,\eta/2)>0$  be given by Corollary \ref{Ctheta}. 
Choose $\dt=\min\{\dt_1/2, \dt_2/2, \eta/33\}.$

Now suppose that $X$ and $Y$ are $\dt$-near-spectra of the $n$-tuple
$(a_1,a_2,...,a_n).$ 
Let $L_1,\, L_2: C(X)\to A$  be  unital c.p.c maps such that
\beq
&&\|L_1(e_ie_j|_X)-L_1(e_i|_X)L_1(e_j|_X)\|<\dt,\,\,\,1\le i,\, j\le n, \,\,\, \\\label{XY-7}
&&\|L_2(e_ie_j|_Y)-L_2(e_i|_Y)L_2(e_j|_Y)\|<\dt,\,\,\,1\le i,\, j\le n, \,\,\, \\\label{XY-7-1}
&&\|L_1(e_j|_X)-a_j\|<\dt, \,\,\, \|L_2(e_j|_Y)-a_j\|<\dt, \,\,\, 1\le j\le n,\andeqn\\\label{XY-8}
&&\|L_1(f|_X)\|\ge (1-\dt) \andeqn \|L_2(f|_Y)\|\ge (1-\dt)
\eneq 
for any $f\in C(\I^n)_+$ with $f(x)=1$ for all $x\in B(\xi, \dt),$ whenever $\xi\in X$ (for  $L_1$),
or $\xi\in Y$ (for $L_2$). 

Let $\xi\in X.$  We may assume that $\xi\in B(x_j, \eta/8).$
Then $B(x_j, \dt)\subset B(\xi, \eta/4).$
By \eqref{XY-8},
\beq\label{Pspuniq-10}
\|L_1(f_j|_X)\|\ge (1-\dt).
\eneq
Define  $L'_l: C(\I^n)\to A$ ($l=1,2$) by 
$L'_1(f)=L_1(f|_X)$ and $L_2'(f)=L_2(f|_Y)$ for all $f\in C(\I^n).$
By \eqref{XY-7} and applying   Lemma \ref{Lpurb}, we have, for $1\le j\le m,$ 
\beq
\|L_1'(f_j)-L_2'(f_j)\|<\eta/32,\,\,\, {\rm or}\,\,\,
\|L_1(f_j|_X)-L_2(f_j|_Y)\|<\eta/32.
\eneq
Then,  by \eqref{Pspuniq-10},  we obtain, for $1\le j\le m,$ 
\beq
\|L_2(f_j|_Y)\|\ge \|L_1(f_j|_X)\|-\eta/32\ge 1-\dt-\eta/32>3/4.
\eneq
Since $f_j(x)=0,$ if ${\rm dist}(x, x_j)\ge \eta/2,$  we must have  ${\rm dist}(x_j, Y)<\eta/2.$
Therefore 
\beq
{\rm dist}(\xi, Y)\le {\rm dist}(\xi, x_j)+{\rm dist}(x_j, Y)<\eta/8+\eta/2=5\eta/8.
\eneq
This  holds for all $\xi\in X.$ 
Exchanging the role of $X$ and $Y,$ we obtain
\beq
d_H(X, Y)<\eta.
\eneq

Let $D^\eta=\{\xi_1,\xi_2,...,\xi_N\}$ and 
 $Z=\bigcup_{\|\Theta_{\xi_j, \eta}(a_1,a_2,...,a_k)\|\ge 1-\eta} \overline{B(\xi_j, \eta)}.$ 

By the choice of  $\dt$ and applying  Corollary \ref{Ctheta}, we have that 
\beq\label{XY-15-}
\|L_1(\Theta_{\xi_j,\eta}|_X)-\Theta_{\xi_j,\eta}(a_1,a_2,...,a_k)\|<\eta/2,\,\,\, j=1,2,...,N.
\eneq
Pick $\xi\in X.$ There is $\xi_j\in D^\eta$ such that
${\rm dist}(\xi, \xi_j)<\eta/2.$  Recall that $\Theta_{\xi_j,\eta}(y)=1$
if ${\rm dist}(y, \xi_j)\le 3\eta/4.$  As $\dt<\eta/32,$  we have $\eta/2+\dt<3\eta/4.$ Hence 
 $\Theta_{\xi_j, \eta}(y)=1$ for all $y\in B(\xi, \dt)\subset B(\xi_j, 3\eta/4).$ 
It follows that 
\beq
\|L_1(\Theta_{\xi_j,\eta}|_X)\|\ge 1-\dt.
\eneq
Thus,  by \eqref{XY-15-},
\beq
\|\Theta_{\xi_j,\eta}(a_1,a_2,...,a_k)\|\ge 1-\dt-\eta/2\ge 1-\eta.
\eneq
This implies that 
\beq
X\subset \bigcup_{\|\Theta_{\xi_j,\eta}(a_1,a_2,...,a_k)\|\ge 1-\eta}\overline{B(\xi_j, \eta)}=s{\rm Sp}^\eta((a_1,a_2,...,a_k)).
\eneq
Since $X\not=\emptyset,$ we conclude that $Z\not=\emptyset.$
The same argument shows that $Y\subset s{\rm Sp}^\eta((a_1,a_2,...,a_k)).$ 

Next, let $\|\Theta_{\xi_j,\eta}(a_1,a_2,...,a_k)\|\ge 1-\eta.$ 
By \eqref{XY-15-},
\beq
\|L_1(\Theta_{\xi_j, \eta}|_X)\|\ge 1-\eta-\eta/2>1/4.
\eneq
Hence $\Theta_{\xi_j, \eta}|_X\not=0.$ 
Thus 
\beq
X\cap B(\xi_j, \eta)\not=\emptyset. 
\eneq
It follows that
\beq
B(\xi_j, \eta)\subset X_{2\eta}. 
\eneq
Hence 
\beq
Z\subset X_{2\eta}.
\eneq
This completes the proof of part (1). 

For (2),  let us keep notation above.  Then $\Omega_1\subset Z$ 
is immediate. 

Suppose that $\Omega_2\not=\emptyset.$ 
%
Then  that $\Omega_2\subset \Omega_1
\subset (\Omega_2)_{2\eta}$ follow  from part (1). 
It remains to show that $\Omega_2\subset \overline{X_\eta}.$


There is a unital c.p.c. map
$\Phi: C(\Omega_2)\to A/J$ such that
\beq
&&\|\Phi(e_ie_j|_{\Omega_2})-\Phi(e_i|_{\Omega_2})\Phi(e_j|_{\Omega_2})\|<\dt,\,\,\, 1\le i,j\le k;\\\label{XY-30}
&&\|\Phi(e_j|_{\Omega_2})-\pi(a_j)\|<\dt,\,\,\, 1\le j\le k,\andeqn\\\label{XY-20}
&&\|\Phi(f)\|\ge 1-\dt
\eneq
for any $f\in C(\Omega_2)_+$ with $f(x)=1$ if $x\in B(\zeta, \dt)$ for some $\zeta\in \Omega_2.$ 

Choose $y\in \Omega_2.$ 
We may assume that ${\rm dist}(y, x_j)<\eta/8.$  
Hence $B(y, \dt)\subset {\rm dist}(x_j,\eta/4).$ 
It follows from \eqref{XY-20} that
\beq
\|\Phi(f_j)\|\ge 1-\dt.
\eneq
Let $\tilde L_1: C(\I^n)\to A/J$ be defined $\tilde L_1(f)=\pi\circ L_1(f|_X)$ 
and $\tilde \Phi: C(\I^n)\to A/J$ by $\tilde \Phi(f)=\Theta(f|_{\Omega_2})$
for all $f\in C(\I^n).$
By the choice of $\dt,$  \eqref{XY-7}, \eqref{XY-30} and \eqref{XY-7-1}, and applying Lemma \ref{Lpurb}
to $\pi\circ \tilde L_1$ and $\tilde \Phi,$  we have  
\beq
\|\pi\circ \tilde L_1(f_j)-\tilde \Phi(f_j)\|<\eta/32,\,\,\, 1\le j\le m.
\eneq
In other words,
\beq
\|\pi\circ L_1(f_j|_X)-\Phi(f_j|_{\Omega_2})\|<\eta/32, \,\,\, 1\le j\le m.
\eneq
Thus
\beq
\|L_1(f_j|_X)\|\ge \|\pi\circ L_1(f_j|_X)\|\ge 1-\dt-\eta/32\ge 3/4.
\eneq
Hence $f_j|_X\not=0.$ It follows that ${\rm dist}(x_j, X)<\eta/2.$
\beq
{\rm dist}(y, X)<\eta/2+\eta/8<\eta.
\eneq
Hence $\Omega_2\subset \overline{X_\eta}.$
\end{proof}

\begin{prop}\label{Pesspsp}
Let $\eta>0.$ There exists $\dt(\eta)>0$ satisfying the following:
Suppose that $A$ is a unital \CA\, and 
$a_1, a_2\in A_{s.a.}$ such that $\|a_i\|\le 1$ ($i=1,2$), and 
\beq\label{Pesspsp-1}
\|a_1a_2-a_2a_1\|<\dt.
\eneq
Then  ${\rm sp}(a_1+ia_2)\subset s{\rm Sp}^\eta((a_1,a_2)).$
\end{prop}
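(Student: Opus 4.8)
The plan is to argue by contradiction, following the scheme of the proof of Proposition \ref{Pappsp}: fix $\eta\in(0,1)$ and assume the conclusion fails, producing a sequence of unital \CA s $\{A_k\}$, self-adjoint $a_{1,k},a_{2,k}\in A_k$ with $\|a_{i,k}\|\le 1$ and $\|a_{1,k}a_{2,k}-a_{2,k}a_{1,k}\|\to 0$, and points $\mu_k\in {\rm sp}(a_{1,k}+ia_{2,k})$ with $\mu_k\notin s{\rm Sp}^\eta((a_{1,k},a_{2,k}))$ for every $k$. Since the spectrum of any element of a unital \CA\, is contained in the closure of its numerical range $\{(\phi(a_{1,k}),\phi(a_{2,k})):\phi\ \text{a state}\}$ and $|\phi(a_{i,k})|\le 1$, we get $\mu_k\in\I^2$; passing to a subsequence we may assume $\mu_k\to\mu\in\I^2$. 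Put $B=\prod_kA_k$, let $\Pi:B\to B/\bigoplus_kA_k$ be the quotient map, and set $s_j=\Pi(\{a_{j,k}\}_k)$, $j=1,2$. Then $s_1,s_2$ are commuting self-adjoints of norm $\le 1$, the unital \CA\, they generate is $C(X_0)$ for a compact $X_0\subseteq\I^2$ via an isomorphism $\phi:C(X_0)\to C^*(1,s_1,s_2)$ with $\phi(e_j|_{X_0})=s_j$, and (by spectral permanence) $X_0={\rm sp}(s_1+is_2)$ computed in $B/\bigoplus_kA_k$.

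The crucial step is to show $\mu\in X_0$. If not, $s_1+is_2-\mu$ is invertible in $B/\bigoplus_kA_k$; lifting an inverse to a bounded sequence $\{b_k\}_k$ in $B$ and using $\sup_k\|b_k\|<\infty$, one sees that $(a_{1,k}+ia_{2,k})-\mu$ is invertible in $A_k$ with uniformly bounded inverse for all large $k$, whence, since $\mu_k\to\mu$, the small perturbation $(a_{1,k}+ia_{2,k})-\mu_k$ is invertible as well for large $k$, contradicting $\mu_k\in {\rm sp}(a_{1,k}+ia_{2,k})$. I expect this to be the only genuinely delicate point, precisely because $a_{1,k}+ia_{2,k}$ is merely almost normal, so its spectrum is not governed by a spectral measure; this is also why the numerical-range inclusion is invoked at the outset, just to place $\mu_k$ (hence $\mu$) in $\I^2$ at all.

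Once $\mu\in X_0$, the remainder is the commutative computation already encoded in the definition of $s{\rm Sp}^\eta$. Since $D^\eta$ is $\eta/2$-dense in $\I^2$, pick $x_{j_0}=(x_{j_0,1},x_{j_0,2})\in D^\eta$ with ${\rm dist}(\mu,x_{j_0})\le\eta/2$; then $|\mu_i-x_{j_0,i}|\le\eta/2<3\eta/4$, so $\theta_{x_{j_0,i},\eta}(\mu_i)=1$ for $i=1,2$ and hence $\Theta_{x_{j_0},\eta}(\mu)=1$. As $0\le\Theta_{x_{j_0},\eta}\le 1$ on $\I^2$ and $\mu\in X_0$, this gives $\|\Theta_{x_{j_0},\eta}|_{X_0}\|=1$. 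Applying the isometric $*$-homomorphism $\phi$ together with compatibility of continuous functional calculus with $*$-homomorphisms yields $\|\Theta_{x_{j_0},\eta}(s_1,s_2)\|=1$; and since $\Theta_{x_{j_0},\eta}(s_1,s_2)=\Pi(\{\Theta_{x_{j_0},\eta}(a_{1,k},a_{2,k})\}_k)$ and $\|\Pi(\{z_k\}_k)\|=\limsup_k\|z_k\|$, we obtain $\limsup_k\|\Theta_{x_{j_0},\eta}(a_{1,k},a_{2,k})\|=1$. Thus $\|\Theta_{x_{j_0},\eta}(a_{1,k},a_{2,k})\|\ge 1-\eta$ for infinitely many $k$, while ${\rm dist}(\mu,x_{j_0})\le\eta/2$ and $\mu_k\to\mu$ force $\mu_k\in\overline{B(x_{j_0},\eta)}$ for all large $k$. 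For any $k$ at which both hold, $\mu_k\in s{\rm Sp}^\eta((a_{1,k},a_{2,k}))$, contradicting the choice of $\mu_k$, and this completes the proof.
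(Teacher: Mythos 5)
Your argument is correct, and it takes a cleaner route than the paper's. The paper derives its contradiction on the \emph{spectrum} side: after passing to the ultraproduct $x=\Pi(\{c_n\})$, it invokes Choi--Effros to produce c.p.c.\ lifts and then Proposition~\ref{Pspuniq} to show $X={\rm sp}(x)\subset Z_n$, which together with $\operatorname{dist}(\lambda,Z_n)\ge\eta_0/4$ gives $\lambda\notin X$; the resolvent-lifting argument then yields $\lambda_n\notin{\rm sp}(c_n)$ for large $n$, contradicting the choice of $\lambda_n$. You turn this around: you use the same resolvent-lifting argument, but in its contrapositive form, to place $\mu$ \emph{inside} $X_0={\rm sp}(s_1+is_2)$, and then you close the loop on the \emph{synthetic-spectrum} side by a direct computation with the test functions $\Theta_{\xi,\eta}$, using the $\eta/2$-density of $D^\eta$, the fact that $\Theta_{\xi,\eta}\equiv 1$ on the $3\eta/4$-cube, and compatibility of functional calculus with the $*$-homomorphisms $\phi$ and $\Pi$ together with $\|\Pi(\{z_k\})\|=\limsup_k\|z_k\|$. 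This sidesteps Choi--Effros and Proposition~\ref{Pspuniq} entirely and also avoids the paper's auxiliary bookkeeping with the intermediate sets $Z_n=s{\rm Sp}^{\eta_0/4}$ and their $d_H$-limit. What the paper's approach buys in exchange is that it reuses machinery (the near-spectrum/synthetic-spectrum comparison) already developed for later use, whereas your version is more self-contained; both rely on the ultraproduct trick and on placing $\mu_k$ in $\I^2$ (your appeal to the numerical range is correct and needed, since $a_{1,k}+ia_{2,k}$ is not normal). The one step you should be careful to state explicitly if writing this up is the identification $\Theta_{x_{j_0},\eta}(s_1,s_2)=\phi(\Theta_{x_{j_0},\eta}|_{X_0})$, which is where commutativity of $s_1,s_2$ is used and which has no analogue for the noncommuting $a_{j,k}$; your phrasing already makes this clear.
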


\begin{proof}
Otherwise, one obtains 
$\eta_0>0,$ 
a sequence of unital \CA s $\{A_n\},$ a sequence of elements 
$a_n, b_n\in (A_n)_{s.a.}$ 
with $\|a_n\|, \|b_n\|\le 1$
such that
\beq\label{esspsp-2}
\lim_{n\to\infty}\|a_nb_n-b_na_n\|=0,
\eneq
and 
 a sequence of numbers $\lambda_n\in {\rm sp}(a_n+ib_n)
 \setminus s{\rm Sp}^{\eta_0}((a_n, b_n)).$
 Put $c_n=a_n+ib_n$ and  $Z_n=s{\rm Sp}^{\eta_0/4}((a_n, b_n)),$ $n\in \N.$
 Note that $\lambda_n\in \I^2\subset \C$  and $Z_n\subset \I^2,$ $n\in \N.$
 Since $(F(\I^2), d_H)$ is compact, without loss of generality,  by passing to a subsequence,
 we may assume that $Z_n\to Z$ for some compact subset $Z\subset \I^n$
 in $(F(\I^2), d_H)$ as $n\to\infty,$ and 
 $\lambda_n\to \lambda\in \I^2.$ 
 
 By \eqref{pSpdteta}, $(s{\rm Sp}^{\eta_0/4}((a_n, b_n)))_{\eta_0/4}\subset s{\rm Sp}^{\eta_0}((a_n, b_n)),$
 $n\in \N.$  It follows that 
 \beq\label{esspsp-5}
 {\rm dist}(\lambda_n, Z_n)\ge \eta_0/4,\,\,\,n\in \N.
 \eneq
 Hence, there exists $n_0\in \N$ 
 such that
 \beq\label{esspsp-6}
 \dist(\lambda, Z_{n})\ge \eta_0/5 \rforal n\ge n_0.
 \eneq
 
Let  $B=\prod_nA_n$ and $\Pi: B\to B/\bigoplus_nA_n$  be the quotient map. 
Put $x=\Pi(\{c_n\}).$ Then, by \eqref{esspsp-2}, $x$ is normal.
Let ${\rm sp}(x)=X.$ 
Then $\phi: C(X)\to B/\bigoplus_nA_n$ defined by $\phi(f)=f(x)$
for $f\in C(X)$ is a unital monomorphism. 
By the Choi-Effros Lifting Theorem (\cite{CE}), there is a sequence 
of unital c.p.c. maps $L_n: C(X)\to A_n$
such that $\Pi\circ \{L_n\}=\phi.$ 
Let $\dt:=\dt(2, \eta_0/4)$ be as in Proposition \ref{Pspuniq}. Then, 
since $\phi$ is a \hm, for all large $n,$
$X$ is a (non-empty) $\dt$-near-spectrum for $(a_n, b_n).$ 
Applying Proposition \ref{Pspuniq}, we may assume that, for all $n,$ 
\beq\label{esspsp-7}
X\subset Z_n.
\eneq
Note that, by \eqref{esspsp-6} and \eqref{esspsp-7},  $\lambda\not\in X.$
Let $y=(\lambda-x)^{-1}.$ 
Note that $\Pi(\{\lambda_n-(a_n+ib_n)\})=\lambda-x.$
Hence there are $y_n\in A_n$ such that ($\|y_n\|\le \|y\|+1$)
\beq
\Pi(\{y_n\})=y\andeqn \lim_{n\to\infty}\|y_n(\lambda_n-(a_n+ib_n))-1\|=0.
\eneq
Hence, for all large $n,$ 
\beq
\lambda_n\not\in {\rm sp}(a_n+ib_n).
\eneq
This is a contradiction. 
So the lemma follows.
%
\end{proof}

\section{Purely infinite simple \CA s}

Recall that the Calkin algebra $B(H)/{\cal K}$ (when $H$ is infinite dimensional separable Hilbert space)
is purely infinite and simple. The results in the section will be used in the next four sections.

\begin{df}\label{Dbrick}
An even $1/k$-
brick $\mathtt{b}=\mathtt{b}^k$  with corner $\xi=(x_1, x_2,...,x_n)\in P_k$ (with $M=1$ -- see  Definition \ref{n-net})  is a subset of $\I^n$ of the form
\beq
\{(y_1,y_2,...,y_n): x_i\le y_i\le x_i+1/k,\, \,x_i, x_i+1/k\in P_k\}.
\eneq
For the remaining of this paper, {\em all $1/k$-bricks are assumed to be even and have
 their corner $\xi\in P_k.$} An open $1/k$-brick $\mathtt{b}^o$ is the interior of the $1/k$-brick
 $\mathtt{b}.$  The  boundary $\partial(\mathtt{b})$ of a $1/k$-brick $\mathtt{b}$ is defined as 
 $\mathtt{b}\setminus \mathtt{b}^o.$
 
{\it A $1/k$-brick combination} $\mathtt{B}^{k}$ is a finite union of $1/k$-bricks. 
Since two distinct  $1/k$ - bricks have distinct corners, 
a $1/k$-brick combination $\mathtt{B}^k$ has the following easy property:
If $\mathtt{b}_1, \mathtt{b}_2$ are two distinct  $1/k$-bricks of $\mathtt {B}^k,$ 
then $\mathtt{b}_1^o\cap \mathtt{b}_2^o=\emptyset.$


\end{df}

We believe  that
Proposition \ref{PKfinite} to be  well known.

\begin{prop}\label{PKfinite}
If $Y$ is a union of finitely many compact convex subsets  in $\R^n$ (for some integer $n\in \N$),  then 
$K_i(C(Y))$ is finitely generated, $i=0,1.$
\end{prop}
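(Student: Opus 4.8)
The plan is to proceed by induction on the number of compact convex pieces and, for the base case, to reduce to a single compact convex set, for which one can invoke contractibility. First I would recall that a nonempty compact convex subset $C\subset\R^n$ is contractible: it deformation retracts onto any of its points by the straight-line homotopy $h_t(x)=tx_0+(1-t)x$, which stays inside $C$ by convexity. Hence $C(C)$ is homotopy equivalent (in the sense relevant for $K$-theory, via the induced maps on $C(-)$) to $\C$, so $K_0(C(C))=\Z$ and $K_1(C(C))=0$; in particular both are finitely generated. If $C=\emptyset$ then $C(C)=0$ and there is nothing to prove. This settles $Y$ when it is a single convex piece.

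For the inductive step, write $Y=Y'\cup C$ where $Y'$ is a union of $m-1$ compact convex sets and $C$ is compact convex, and both are closed. The tool is the Mayer--Vietoris sequence in $K$-theory associated to the pullback square of $C^*$-algebras
\begin{equation}
\begin{CD}
C(Y) @>>> C(Y')\\
@VVV @VVV\\
C(C) @>>> C(Y'\cap C),
\end{CD}
\end{equation}
which is a pullback because $Y=Y'\cup C$ with $Y'$, $C$ closed, restriction maps surjective, and $C(Y'\cap C)$ the amalgam. This yields a cyclic six-term exact sequence
\begin{equation}
K_0(C(Y))\to K_0(C(Y'))\oplus K_0(C(C))\to K_0(C(Y'\cap C))\to K_1(C(Y))\to\cdots
\end{equation}
Now $Y'\cap C$ is again a finite union of compact convex sets (intersections of convex sets are convex, and $(A\cup B)\cap C=(A\cap C)\cup(B\cap C)$), with at most $m-1$ pieces, so by induction $K_i(C(Y'\cap C))$ is finitely generated; and $K_i(C(Y'))$, $K_i(C(C))$ are finitely generated by the inductive hypothesis and the base case. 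Since in a six-term exact sequence a group sandwiched between two finitely generated groups (as a quotient of a subgroup of the middle term, mapping onto a subgroup of the next) is itself finitely generated — $\Z$ is Noetherian, so subgroups and quotients of finitely generated abelian groups are finitely generated — it follows that $K_i(C(Y))$ is finitely generated for $i=0,1$.

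The only genuine point requiring care — the ``hard part,'' though it is standard — is verifying the hypotheses of Mayer--Vietoris: that the square above is a pullback of $C^*$-algebras with at least one of the two maps to $C(Y'\cap C)$ surjective (here both are, by the Tietze extension theorem), so that the excision/Mayer--Vietoris six-term sequence applies. One must also keep track that ``finite union of compact convex sets'' is a class closed under the operations used (finite intersections, and intersecting with a fixed convex set), which is elementary distributivity together with convexity of intersections. Everything else is bookkeeping with finitely generated abelian groups. Alternatively, one could bypass the induction entirely by noting that such a $Y$ is a finite CW complex (a union of finitely many convex polytopes can be triangulated, or more simply: a finite union of compact convex sets is an absolute neighborhood retract with the homotopy type of a finite complex), whence $K^*(Y)$ is finitely generated by the Atiyah--Hirzebruch spectral sequence over the finitely generated groups $H^*(Y;\Z)$; but the Mayer--Vietoris induction is the most self-contained route and is the one I would write up.
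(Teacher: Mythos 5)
Your proposal is correct and takes essentially the same route as the paper: induction on the number of convex pieces, contractibility of a single compact convex set for the base case, and the Mayer--Vietoris six-term exact sequence arising from the pullback $C(Y)\to C(Y')\oplus C(C)\to C(Y'\cap C)$ together with Noetherianity of finitely generated abelian groups for the inductive step. The extra care you take with the hypotheses of Mayer--Vietoris (surjectivity of the restriction maps, closure of the class under intersection) and the alternative Atiyah--Hirzebruch remark are sound additions, but the core argument coincides with the paper's.
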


\begin{proof}

We prove this by induction on the number of convex sets.

If $Y$ is convex, then $Y$ is contractive. Hence $K_0(C(Y))=\Z$ and 
$K_1(C(Y))=\{0\}.$

Suppose that the proposition  is proved for $m$ many closed convex sets, $m\ge 1.$

Now suppose that $Y$ is a union of $m+1$ many compact convex sets in $\R^n.$
Write $Y=\cup_{i=1}^{m+1} Y_i,$
where each $Y_i$ is a compact convex subset of $\R^n.$ 

Put $X_1=\cup_{i=1}^m Y_i.$ Then 
$Y_{m+1}\cap X_1=\cup_{i=1}^m (Y_i\cap Y_{m+1})$ 
which is a union of $m$ many compact convex subsets. 
By the induction assumption, $K_i(C(X_1\cap Y_{m+1}))$ is finitely generated ($i=0,1$). 

Then 
\beq
C(Y)=\{(f, g): (f, g)\in C(X_1)\oplus C(Y_{m+1}): f|_{X_1\cap Y_{m+1}}=g|_{X_1\cap Y_{m+1}}\}
\eneq
is a pull back. By the Mayer-Vietoris  sequence in $K$-theory for \CA s (see \cite{Hj86}),
we obtain the following commutative diagram. 

\beq
\begin{array}{ccccc}
{\small{K_0(C(Y))}} &
{\longrightarrow}&  K_0(C(X_1))\oplus K_0(C(Y_{m+1})) &
{\rightarrow}
& K_0(C(X_1\cap Y_{m+1}))\\
\uparrow
& && & \downarrow\\
K_1(C(X_1\cap Y_{m+1})) &
{\leftarrow}&  K_i(C(X_1))\oplus K_1(C(Y_{m+1})) &
{\leftarrow}
& K_i(C(Y))\,.
\end{array}
\eneq

Since we have established that $K_i(C(X_1)),$ $K_i(C(Y_{m+1}))$ and 
$K_i(C(X_1\cap Y_{m+1}))$ are all finitely generated abelian groups, we conclude that
$K_i(C(Y))$ is also finitely generated.  This ends the induction.

\end{proof}

\begin{df}\label{Dbrickcover}
Let $X$ be a compact subset of $\I^n.$    Define  $\mathtt{B}_X^k$ to be the union of all 
even $1/k$-bricks  (in $\I^n$ with corners at $P_k$ -- see  Definition \ref{n-net}) which has a non-empty intersection 
with $X.$

We will use the following easy facts:


(i) $X\subset \mathtt{B}_X^k;$
(ii) If $\mathtt{b}^k$ is one of  the bricks in $\mathtt{B}_X^k,$ then $\mathtt{b}^k\cap X\not=\emptyset;$
(iii) For any $\zeta\in \mathtt{B}_X^k,$ ${\rm dist}(\zeta, X)\le \sqrt{n}/k;$ and 
(iv) $K_i(C(\mathtt{B}_X^k)$ is finitely generated (this follows from Proposition \ref{PKfinite}).
%

%
In what follows $\mathtt{B}_X^k$ is called the $1/k$-brick cover of $X.$
(The introduction of even $1/k$-bricks is for convenience not essential.)
\end{df}

\begin{df}\label{DN}
Let $X$ be a compact metric space. We say that $X$ has property (F), if 
$X=\sqcup_{i=1}^m X_i$  is a disjoint union of finitely many path connected compact metric spaces
$X_i$ such that  $K_i(C(X_j))$ is finitely generated ($i=0,1$), $j=1,2,...,m.$
Choose a base point $x_i\in X_i.$ Write 
$Y_i=X_i\setminus \{x_i\},$ $i=1,2,...,m.$
Denote by $\Omega(X)=\sqcup_{i=1}^m Y_i.$ 

Let $B$ be a \CA\, and $\phi: C(X)\to B$ 
a \hm. Denote by $\phi^\omega=\phi|_{C_0(\Omega(X))}: C_0(\Omega(X))\to B$ the restriction.  

Every finite CW complex $X$ has property (F). 
If $X$ is a $1/k$-brick combination in $\I^n,$  then (by Proposition \ref{PKfinite}) $X$ also has property (F). 
%
\end{df}

Recall that if $\psi: C(X)\to B$ has finite dimensional range, then there are 
mutually orthogonal projections $p_1, p_2, ...,p_l\in B$ and 
$x_1, x_2,...,x_l\in X$ such that
\beq
\psi(f)=\sum_{i=1}^l f(x_i)p_i\rforal f\in C(X).
\eneq

\begin{thm}\label{T-1fs}
Let $X$ be a compact metric space  with property (F).
Suppose that $B$ is a unital purely infinite simple \CA\, and $\phi: C(X)\to B$
is a unital  injective \hm\, such that 
\beq\label{T-1fs-1}
[\phi^\omega]=0\,\,\,{\rm in}\,\,\, KK(C_0(\Omega(X)), B)
\eneq
(see Definition \ref{DN} for $\phi^\omega$).
Then, for any $\ep>0$ and any finite subset ${\cal F},$ there exists 
a set of finitely many mutually orthogonal projections $p_1,p_2,...,p_k$
with $\sum_{i=1}^kp_i=1$ and 
$x_1, x_2,...,x_k\in X$ such that 
\beq\label{T-1fs-2}
\|\psi(f)-\phi(f)\|<\ep \tforal f\in {\cal F},
\eneq
where $\psi(f)=\sum_{j=1}^k f(x_j)p_j$ for all $f\in C(X).$
Moreover,  if $X$ is connected, 
we may require that either $[p_j]=0,$ or $[p_j]=[1_B],$ in $K_0(B),$ $j=1,2,...,k.$
\end{thm}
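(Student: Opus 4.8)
The plan is to reduce the problem to a standard ``uniqueness'' statement for maps from commutative $C^*$-algebras into a unital purely infinite simple $C^*$-algebra, using the property (F) decomposition to treat one path-connected piece at a time and the $KK$-triviality hypothesis \eqref{T-1fs-1} to control the $K$-theory. First I would observe that it suffices to handle the case $X$ connected: if $X=\sqcup_{i=1}^m X_i$, then $C(X)=\bigoplus_{i=1}^m C(X_i)$, and $\phi$ decomposes as $\phi=\bigoplus \phi_i$ with each $\phi_i\colon C(X_i)\to p_iBp_i$ (where $p_i=\phi(1_{X_i})$ is a projection). Each $p_iBp_i$ is again unital purely infinite simple, and $[\phi_i^\omega]=0$ in $KK(C_0(Y_i),p_iBp_i)$ follows by restricting \eqref{T-1fs-1} to the summand. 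A finite-dimensional-range approximation $\psi_i$ of $\phi_i$ on ${\cal F}|_{X_i}$, together with the matching $[p_j]=0$ or $[p_j]=[p_i]=[1_{p_iBp_i}]$ inside $K_0(p_iBp_i)$, then assembles into the desired $\psi$; the normalization of the $K_0$-classes will come for free once we know the connected case, since $[1_{p_iBp_i}]$ maps to $[1_B]$ only when $[p_i]=[1_B]$, and otherwise we absorb the discrepancy into the $[p_j]=0$ projections — this bookkeeping is routine and I would not belabor it.

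For $X$ connected, the strategy is the one from the classification-by-$K$-theory circle of ideas (Lin's work on approximately homomorphic maps into purely infinite simple algebras, cf. the references \cite{Linann03}, \cite{Linduke}). The point is that in a unital purely infinite simple $C^*$-algebra one has an abundance of projections and partial isometries, so a unital homomorphism $\phi\colon C(X)\to B$ can be perturbed, on a prescribed finite set ${\cal F}$ and within a prescribed tolerance $\ep$, to a homomorphism with finite-dimensional range provided the obstruction in $KK$-theory vanishes. Concretely I would: (1) pick a finite open cover $\{U_j\}$ of $X$ of diameter small enough that $\mathrm{diam}(f(U_j))<\ep/4$ for all $f\in {\cal F}$, with a subordinate partition of unity, and choose points $x_j\in U_j$; (2) this produces a ``cut-down'' data from which one wants to build mutually orthogonal projections $p_j\approx \phi(g_j)$ for suitable functions $g_j$ that are approximately indicators; the difficulty is that $\phi(g_j)$ are only positive elements, not projections, and their supports overlap; (3) use that $B$ is purely infinite simple to replace these overlapping positives by genuine mutually orthogonal projections summing to $1$, while keeping $\sum f(x_j)p_j$ close to $\phi(f)$. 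The $KK$-condition \eqref{T-1fs-1} enters precisely at the step where one must choose the $K_0$-classes $[p_j]$ consistently: the difference between the ``approximate multiplicative'' map and an honest finite-dimensional one is measured by a class in $KK(C_0(\Omega(X)),B)$ (the reduced theory, since constants are unconstrained), and vanishing of this class is what allows the projections to be chosen with the prescribed classes — in particular all equal to $0$ or $[1_B]$.

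The main obstacle, as I see it, is step (3): turning approximately orthogonal, approximately-projection positive elements $\phi(g_j)$ into exactly orthogonal exact projections $p_j$ with $\sum p_j = 1$, controlling simultaneously (a) the norm estimate $\|\psi(f)-\phi(f)\|<\ep$ and (b) the $K_0$-classes. Pure infiniteness and simplicity give the room to do this — one can always find a projection below a given nonzero positive element, absorb small ``error'' projections into large ones via comparison theory for projections in purely infinite simple algebras ($p\precsim q$ whenever $p\ne 0$ and $q\ne 0$ in the simple case, or $[p]=[q]$ in $K_0$ with appropriate stability), and use the injectivity of $\phi$ to guarantee the relevant elements are genuinely nonzero — but organizing the induction over the finitely many overlaps so that the accumulated error stays below $\ep$ and the $KK$-triviality is actually invoked correctly is the technical heart. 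I would expect this to go through by the kind of successive-approximation / semiprojectivity argument that is by now standard in this subject, invoking the finite generation of $K_*(C(X))$ (part of property (F)) to reduce the $KK$-group to something on which \eqref{T-1fs-1} can be used concretely.
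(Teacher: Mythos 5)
Your reduction to the connected case matches the paper's first step, and your high-level intuition about where the $KK$-hypothesis should enter is not wrong, but your step (3) contains a genuine gap that the rest of the proposal does not repair. The positive elements $\phi(g_j)$ built from a partition of unity subordinate to a small open cover of a \emph{connected} space $X$ are not approximately orthogonal: the supports of the $g_j$ must overlap (there is no way to partition a connected space into disjoint small closed pieces), so the $\phi(g_j)$ genuinely overlap. One cannot perturb overlapping positives into exactly orthogonal projections summing to $1$ by a small norm perturbation, and pure infiniteness alone does not fix this; indeed if it did, the hypothesis \eqref{T-1fs-1} would be superfluous, but the conclusion fails without it (e.g.\ $X=S^1$ and $\phi$ a unital embedding with nontrivial $K_1$-image).

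The deeper issue is that the $KK$-obstruction is a global invariant, and a local successive-approximation scheme has no mechanism to ``see'' or ``kill'' it. The paper handles this with a stable uniqueness theorem: it splits $KK(C(X),B)\cong KK(C_0(\Omega),B)\oplus KK(\C,B)$ using finite generation of $K_*(C(X))$, observes that \eqref{T-1fs-1} forces $[\phi]=[\phi_1]$ in $KK(C(X),B)$ where $\phi_1(f)=f(x)1_B$, and then invokes Theorem~A of Dadarlat \cite{Dd} to produce a unitary $u\in M_{k+1}(B)$ and a finite-dimensional homomorphism $\phi_0$ with
\[
\bigl\|\operatorname{diag}(\phi(f),\phi_0(f))-u^*\operatorname{diag}(\phi_1(f),\phi_0(f))u\bigr\|<\ep/3 \quad \text{for } f\in{\cal F}.
\]
This is still a \emph{stable} statement (one has added the summand $\phi_0$). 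The paper then de-stabilizes using Zhang's theorem that $B$ has real rank zero together with Lemma 4.1 of \cite{EGLP}, which lets one write $\phi(f)\approx(1-q)\phi(f)(1-q)+\sum f(x_i)q_i$ with the $q_i$ mutually orthogonal and nonzero, and finally absorbs the stabilizing summand via partial isometries (using that the classes $[p_i]$, $[q_i']$ can be taken to be $0$ in $K_0(B)$, which is where pure infiniteness enters). None of these three ingredients — the $KK$-splitting together with Dadarlat's Theorem~A, the EGLP digitalization lemma, and the partial-isometry absorption — appears in your proposal, and I do not see how to replace them by the local argument you sketch. You would need to supply a stable uniqueness input of this kind before the rest of your plan can proceed.
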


\begin{proof}
 Since $X$ has property (F), we may write 
 $C(X)=C(\sqcup_{i=1}^m X)=\bigoplus_{i=1}^m C(X_i),$ where each $X_i$ is path connected
 and $K_j(C(X_i))$ is finitely generated ($j=0,1$), $ 1\le i\le m.$
 Let $d_i\in C(X)$ be such that ${d_i}|_{X_i}=1$ and $d_i(x)=0$ if $x\not\in X_i,$ $1\le i\le m.$
 Put $E_i=\phi(d_i)$ ($1\le i\le m$). By considering $\phi|_{C(X_i)}\to E_iBE_i$ for each $i,$
 we reduce the 
general case to the case that $X$ has only one path connected component.
Note that, if originally, $X$ is not connected, then we do not need to consider the ``Moreover" part 
of the theorem. If $X$ is connected, then we need to point out that $\phi(1_{C(X)})=1_A.$

So now we assume that $X$ is connected. 
Put $\Omega:=\Omega(X)=X\setminus \{x\}$ for some 
$x\in X.$ 
Then we have a splitting short exact sequence 
$$
0\to C_0(\Omega)\to C(X)\to \C\to 0.
$$
Recall that $K_i(C(X))$ is finitely generated ($i=0,1$).
It follows  that, for any unital \CA\, $B,$
\beq
KK(C(X), B)=KK(C_0(\Omega), B)\oplus KK(\C, B).
\eneq
Define $\phi_1: C(X)\to B$ by $\phi_1(f)=f(x)\cdot 1_B$ for all $f\in C(X).$
Then one computes that, by \eqref{T-1fs-1},
\beq
[\phi_1]=[\phi]\,\,\,{\rm in}\,\, \, KK(C(X), B).
\eneq
Fix $\ep>0$ and finite subset ${\cal F}\subset C(X).$
Applying Theorem A of \cite{Dd}, 
we obtain an integer $k,$  a unitary $u\in M_{k+1}(B)$
and $x_1, x_2,...,x_k\in X$ 
such that, for all $f\in {\cal F},$
\beq\label{T1-9}
\|\diag(\phi(f), f(x_1), f(x_2),...,f(x_k))-u^*\diag(\phi_1(f), f(x_1),f(x_2),...,f(x_k))u\|<\ep/3.
\eneq

Define $\phi_0: C(X)\to M_k(B)$ by
$\phi_0(f)=\diag(f(x_1), f(x_2),...,f(x_k))$ for all $f\in C(X).$
Then we may rewrite $\phi_0(f)=\sum_{i=1}^k f(x_i)p_i$
for all $f\in C(X),$ where $\{p_1, p_2,...,p_k\}$ is a set of mutually orthogonal 
projections such that $\sum_{i=1}^k p_i=1_{M_k}.$ 
Choose mutually orthogonal  projections $p_{0,1}, p_{0,2},...,p_{0,k}\in M_{k+1}(B)$ such that 
$p_01_{M_k}=1_{M_k} p_0=0,$ where $p_0=\sum_{i=1}^k p_{0,i},$ and 
$[p_{0,i}+p_i]=0$ in $K_0(B)$ (recall that $B$ is purely infinite simple).
Define $\phi_{00}: C(X)\to (1_{M_k}+p_0)M_{k+1}(B)(1_{M_k}+p_0)\subset M_{k+1}(B)$
by $\phi_{00}(f)=\sum_{i=1}^k f(x_i)(p_i+p_{0,i})$ for all $f\in C(X).$
By replacing $\phi_0$ by $\phi_{00}$ and $u$ by
$u\oplus 1_B,$ we may assume, without loss of generality, 
 that $[p_i]=0,$ $i=1,2,...,k,$ and 
\beq\label{T1-10}
\|\diag(\phi(f), \phi_0(f))-u^*\diag(\phi_1(f), \phi_0(f))u\|<\ep/3\rforal f\in {\cal F}.
\eneq
(But then $u$ is a unitary in $M_{k+2}(B)$).

Recall that, by  S. Zhang's theorem (\cite{Zh}), $B$ has real rank zero.
Since $\phi$ is injective, by  applying Lemma 4.1 of \cite{EGLP}, we obtain a set of mutually orthogonal  non-zero 
projections $q_1, q_2, ...,q_k\in B$ with $q=\sum_{i=1}^kq_i<1_B$ 
such that
\beq
\|\phi(f)-((1-q)\phi(f)(1-q)+\sum_{i=1}^k f(x_i) q_i)\|<\ep/3\rforal f\in {\cal F}.
\eneq
There are $q_i'\le q_i$ such that  $[q_i']=0$ in $K_0(B),$ $i=1,2,...,k$
(recall again that $B$ is purely infinite  simple). 
We may then write 
\beq\label{T1-11}
\|\phi(f)-(\gamma(f)+\sum_{i=1}^kf(x_i)q_i')\|<\ep/3\rforal f\in {\cal F},
\eneq
where  $\gamma(f)=(1-q)\phi(f)(1-q)+\sum_{i=1}^k f(x_i)(q_i-q_i')$
for all $f\in C(X).$
There 
are $w_i\in M_{2k+3}(B)$
such that
\beq
w_i^*w_i=q_i'\andeqn w_iw_i^*=q_i'\oplus p_i,\,\,\, i=1,2,...,k.
\eneq
Let $w=(1-q)+\sum_{i=1}^k (q_i-q_i')+\sum_{i=1}^k w_i.$
Then $w^*w=1_B$ and $ww^*=1_{M_{k+1}}+\sum_{i=1}^k p_i.$ Moreover, 
\beq
w^*(\diag(\gamma(f)+\sum_{i=1}^k f(x_i)q_i', \phi_0(f))w=\gamma(f)+\sum_{i=1}^kf(x_i)q_i'
\eneq
Hence
\beq\label{T1-13}
\|\phi(f)-w^*(\diag(\gamma(f)+\sum_{i=1}^k f(x_i)q_i', \phi_0(f))w\|<\ep/3\rforal f\in {\cal F}.
\eneq
Define, for all 
$f\in C(X),$ 
\beq\label{T1-20}
\psi(f)=w^*u^*(\diag(\phi_1(f), \phi_0(f))uw=
f(x) w^*u^*1_Au^*w+\sum_{i=1}^k f(x_i)w^*u^*p_iuw
\eneq
Then $\psi$ is a unital \hm\, from $C(X)$ to $B$  with finite dimensional range
and, by  \eqref{T1-13}, \eqref{T1-11} and \eqref{T1-10},  for all $f\in {\cal F},$
\beq\nonumber
\hspace{-0.15in}\|\phi(f)-\psi(f)\| &\le& \|\phi(f)-w^*(\diag(\gamma(f)+\sum_{i=1}^k f(x_i)q_i', \phi_0(f)))w\|\\\nonumber
&& \hspace{0.2in}+\|w^*(\diag(\gamma(f)+\sum_{i=1}^k f(x_i)q_i', \phi_0(f)))w-w^*u^*\diag(\phi_1(f), \phi_0(f))uw\|\\\nonumber
 &<&\ep/3+\|\diag(\gamma(f)+\sum_{i=1}^k f(x_i)q_i', \phi_0(f))-u^*\diag(\phi_1(f), \phi_0(f))u\|\\\nonumber
 &<&\ep/3+\ep/3+\|\diag(\phi(f), \phi_0(f))-u^*\diag(\phi_1(f), \phi_0(f))u\|<2\ep/3+\ep/3=\ep.
\eneq
Finally, since $[p_i]=0$ in $K_0(B)$ as we arranged earlier,  by  \eqref{T1-20},
the  ``Moreover" part of the statement also holds.
\end{proof}

Let $X\subset \I^n$ be a compact subset. We use the usual Euclidian metric.
Recall that, for any $\eta>0,$ 
\beq
X_\eta=\{\xi\in \I^n: {\rm dist}(\xi, X)< \eta\}.
\eneq

\begin{thm}\label{C1}
Fix $n\in \N.$ For any $\ep>0$ and finite subset ${\cal F}\subset C(\I^n),$ 
there exists $\dt(n, \ep)>0$ satisfying the following:

Let $X$ be a compact subset of $\I^n.$ 
%
Suppose that $B$ is a unital purely infinite simple \CA\, and 
$\phi: C(X)\to B$ is a unital injective \hm. 
If $Y$ is any compact subset with property (F) such that
$X \subset Y\subset X_\dt,$ 
then there is  
a unital injective \hm\, $\psi: C(Y)\to B$ such that
\beq\label{C1-f1}
\|\phi(f|_X)-\psi(f|_Y)\|<\ep\tforal f\in {\cal F},
\,\,\, [\psi]=[\phi\circ \pi_X]\,\,\,in\,\,\, KK(C(Y), B),
\eneq
where  $\pi_X: C(Y)\to C(X)$ is the quotient map by restriction ($\pi_X(g)=g|_X$ for all $g\in C(Y)$). 
\end{thm}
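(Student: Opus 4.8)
The plan is to run the stabilization-and-absorption scheme used in the proof of Theorem~\ref{T-1fs}, but applied to the homomorphism $\phi\circ\pi\colon C(Y)\to B$ instead of to $\phi$. The point is that $\phi\circ\pi$ already has the right $KK$-class and that $(\phi\circ\pi)(f|_Y)=\phi(f|_X)$ for every $f$, so the only defect to repair is that $\phi\circ\pi$ is not injective (its kernel is $C_0(Y\setminus X)$). The constant $\delta=\delta(n,\ep)$ enters in exactly one place: I choose it so small that $|f(x)-f(y)|<\ep/16$ for all $f\in{\cal F}\cup\{1,e_1,\dots,e_n\}$ whenever $x,y\in\I^n$ with ${\rm dist}(x,y)\le\delta$. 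First I reduce to the connected case: since $Y$ has property (F), write $Y=\sqcup_{i=1}^m Y_i$ with $Y_i$ path connected and $K_*(C(Y_i))$ finitely generated; the sets $X_i:=X\cap Y_i$ are clopen in $X$, so $\phi=\bigoplus_i\phi_i$ with $\phi_i\colon C(X_i)\to E_iBE_i$, $E_i=\phi(1_{X_i})$, and $\sum_iE_i=1_B$. Treating each index separately (and, when $X_i=\emptyset$, carving a small $K_0$-trivial corner out of $B$ to carry a tiny injective copy of $C(Y_i)$), I may assume $Y$ is path connected, $X\neq\emptyset$, and $\phi\colon C(X)\to B$ is unital injective.

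Next I produce an injective ``carrier'' with the correct $KK$-class. Fix a nonzero projection $e\in B$ with $e<1_B$ and $[e]=0$ in $K_0(B)$. Since $C(Y)$ is nuclear, Kirchberg's embedding theorem gives a unital embedding $C(Y)\hookrightarrow{\cal O}_2$; composing with a unital embedding ${\cal O}_2\hookrightarrow eBe$ (available since $eBe$ is unital purely infinite simple) yields a unital injective $\lambda\colon C(Y)\to eBe$ which, factoring through ${\cal O}_2$ and having $[\lambda(1)]=[e]=0$, satisfies $[\lambda]=0$ in $KK(C(Y),B)$. Choosing a partial isometry $v\in B$ with $v^*v=1_B$ and $vv^*=1_B-e$, put $\psi_1:=\lambda(\cdot)+v\,(\phi\circ\pi)(\cdot)\,v^*$; this is a unital injective homomorphism with $[\psi_1]=[\lambda]+[\phi\circ\pi]=[\phi\circ\pi]$.

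Now I apply the stable uniqueness theorem (Theorem~A of \cite{Dd}) to the pair $\phi\circ\pi,\psi_1\colon C(Y)\to B$, which have equal $KK$-class, for tolerance $\ep/16$ and the finite set ${\cal G}=\{f|_Y:f\in{\cal F}\}\cup\{e_i|_Y:1\le i\le n\}\cup\{1\}$. This gives a finite-dimensional-range homomorphism $\sigma\colon C(Y)\to M_N(B)$, $\sigma(g)=\sum_j g(z_j)s_j$ with $z_j\in Y$, and a unitary $u$ over $B$ with $\|(\phi\circ\pi)(g)\oplus\sigma(g)-u^*(\psi_1(g)\oplus\sigma(g))u\|<\ep/16$ on ${\cal G}$. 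Enlarging $\sigma$ by mutually orthogonal projections of complementary $K_0$-class (and replacing $u$ by $u\oplus 1$) I may assume each $[s_j]=0$; then $[\sigma]=0$ in $KK(C(Y),B)$ and $e':=\sigma(1)$ has $[e']=0$. Finally, for each $j$ I pick $z_j'\in X$ with ${\rm dist}(z_j,z_j')<\delta$ and set $\sigma'(g)=\sum_j g(z_j')s_j$, so $\|\sigma(g)-\sigma'(g)\|<\ep/16$ on ${\cal G}$; crucially $\sigma'=\widetilde\sigma\circ\pi$ for a finite-dimensional $\widetilde\sigma\colon C(X)\to M_N(B)$ with $K_0$-trivial range, and hence $(\phi\circ\pi)(g)\oplus\sigma'(g)=(\phi\oplus\widetilde\sigma)(f|_X)$ with $\phi\oplus\widetilde\sigma$ now \emph{injective}.

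The final step absorbs $\widetilde\sigma$ into $\phi$ and collapses the amplification back into $B$, exactly as in the proof of Theorem~\ref{T-1fs}. Since $\phi$ is injective, $B$ and all matrix algebras over it are purely infinite simple with real rank zero (S.\ Zhang \cite{Zh}), and since all the relevant $K_0$-classes vanish, Lemma~4.1 of \cite{EGLP} plus the partial-isometry bookkeeping of that proof provide a partial isometry $W$ over $B$ with $W^*W=1_B$, $WW^*=1_B\oplus e'$, and an absorbing partial isometry $\omega$ with $\omega^*(\phi\oplus\widetilde\sigma)(\cdot)\omega\approx\phi(\cdot)$ on ${\cal G}|_X$; choosing $W$ so that $u^*W=\omega$, the homomorphism $\psi:=W^*(\psi_1(\cdot)\oplus\sigma(\cdot))W\colon C(Y)\to B$ is unital and injective (since $\psi_1$ is), has $[\psi]=[\psi_1]+[\sigma]=[\phi\circ\pi]$, and satisfies $\|\psi(f|_Y)-\phi(f|_X)\|<\ep$ for all $f\in{\cal F}$ by chaining the stable-uniqueness estimate, the $\delta$-perturbation $\sigma\rightsquigarrow\sigma'$, and the absorption identity. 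I expect this last step to be the only genuine obstacle: one must keep track of the $K_0$-classes and partial isometries carefully enough that the matrix amplification produced by the stable uniqueness theorem collapses into $B$ while unitality, injectivity, the $KK$-class and the norm bound are all preserved — but, having arranged the relevant classes to vanish, this is a faithful transcription of the corresponding part of the proof of Theorem~\ref{T-1fs}.
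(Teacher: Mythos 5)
Your argument is correct, but it runs along a genuinely different track than the paper's.  The paper keeps the uniqueness theorem on the domain $C(X)$: it builds $\phi_1 = w^*\phi w + \phi_0$ (still injective), applies Dadarlat's uniqueness theorem for \emph{monomorphisms} into purely infinite simple $C^*$-algebras to the pair $\phi,\phi_1$ on $C(X)$ (Theorem 1.7 of \cite{Dd}, no matrix amplification needed), and then separately approximates the Kirchberg embedding $\psi_0: C(Y)\to\Phi(\mathcal O_2)$ by a finite-dimensional homomorphism via Theorem \ref{T-1fs} and groups its points near an $\eta_0$-net of $X$ to build the intertwining unitary $v$.  You instead work entirely on $C(Y)$: you regard $\phi\circ\pi$ (not injective) as the given map, build the injective carrier $\psi_1 = \lambda + v(\phi\circ\pi)v^*$ with the right $KK$-class, and invoke the \emph{stable} uniqueness theorem (Theorem A of \cite{Dd}) for the non-injective pair $\phi\circ\pi,\,\psi_1$ at the cost of a matrix amplification by a finite-dimensional $\sigma$, which you then snap to points of $X$ and absorb into $\phi$ by the EGLP/\ref{T-1fs} mechanism.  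What the paper's route buys is avoiding the amplification and the ensuing collapse step — the final map is an honest unitary conjugate $v^*\psi_1 v$, so well-definedness, unitality, injectivity and the $KK$-class are immediate; what your route buys is a proof that is structurally a rerun of Theorem \ref{T-1fs} applied to $\phi\circ\pi$, which makes the absorption step feel canonical.  Two small remarks: (i) your preliminary reduction to connected $Y$ is unnecessary — nothing in steps 2--6 uses connectedness, and the case $X\cap Y_i=\emptyset$ that you gesture at (``carving a small $K_0$-trivial corner'') is in fact handled automatically once you drop the reduction, since the stable-uniqueness data and the $\delta$-perturbation of $\sigma$ already place every $z_j'$ in $X$ regardless of which component of $Y$ the $z_j$ lived in; (ii) the sentence ``choosing $W$ so that $u^*W=\omega$'' should read ``set $W:=u\omega$'' — $\omega$ is produced first by the EGLP absorption with $\omega^*\omega=1_B$ and $\omega\omega^*=1_{M_{N+1}(B)}$, and then $W=u\omega$ inherits these properties because $u$ is a genuine unitary of $M_{N+1}(B)$; this is also what guarantees $\psi=W^*(\psi_1\oplus\sigma)W$ is a homomorphism, since $WW^*$ is the full unit.
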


\begin{proof}
%
Let $\ep>0$ and finite subset ${\cal F}\subset C(\I^n)$ be given.
Choose $0<\dt<\min\{\ep/2, 1/8\}$ such that, for all $g\in {\cal F},$ 
\beq\label{C1-09}
|g(\xi)-g(\xi')|<\ep/4\rforal \xi, \xi'\in \I^n\,\,\,{\rm with}\,\,\, {\rm dist}(\xi, \xi')<2\dt.
\eneq

Suppose that $Y$ is a 
compact subset of $\I^n$ such that
$X\subset Y\subset X_\dt$ which has property (F).
Since $Y$ is compact, there is $\dt_1>0$ with $0<\dt_1<\dt$ such that
\beq
Y\subset X_{\dt_1}.
\eneq

Put $\eta_0={\dt-\dt_1\over{4}}>0$  and $r=\dt_1+\eta_0.$ Then 
$0<r=\dt_1+\eta_0<\dt_1+2\eta_0<\dt.$
There is a finite $\eta_0$-net $\{\xi_1, \xi_2,..., \xi_m\}\subset X$ such that 
$\cup_{i=1}^m B(\xi_i,\eta_0)\supset X,$
where $B(\xi_i, \eta_0)=\{\xi\in \R^n: {\rm dist}(\xi, \xi_i)< \eta_0\}.$  
Then $\cup_{i=1}^m B(\xi_i, r)\supset Y.$  Note that we assume that $\xi_i\not=\xi_j,$
if $i\not=j.$

Choose a non-zero projection $p\in B$ with $[p]=0$ in $K_0(B)$ and 
$1-p\not=0.$   Since $pBp$ is a purely infinite simple 
\CA, there is a unital embedding
$\Phi: O_2\to pBp.$  Applying Theorem 2.8 of \cite{KP}, one obtains 
a unital embedding 
$\psi_0: C(Y)\to \Phi(O_2)\subset pBp.$ Note  $[\psi_0]=0$ in $KK(C(Y),pBp).$
There is a partial isometry   $w\in B$ such that $w^*w=1-p$ and $ww^*=1.$ 
Define $\psi_1: C(Y)\to B$ by $\psi_1(f)=w^*(\phi(f|_X)w\oplus \psi_0(f)$
for all $f\in C(Y).$ Then $\psi_1$ is a unital injective \hm.

Choose mutually orthogonal non-zero projections $p_1, p_2,...,p_m\in \Phi(O_2)\subset pBp$ 
such that $\sum_{i=1}^m p_i=p.$ Define 
$\phi_0: C(X)\to \Phi(O_2)\subset pBp$ by $\phi_0(f)=\sum_{i=1}^mf(\xi_i)p_i.$
Then $[\phi_0]=0$ in $KK(C(X),B).$
Define $\phi_1: C(X)\to B$ by $\phi_1(f)=w^*\phi(f)w+\phi_0(f)$ for all $f\in C(X).$
  Then $\phi_1$ is injective.
One computes that
\beq
[\phi]=[\phi_1]\,\,\, {\rm in}\,\,\, KK(C(X), B).
\eneq
It follows from Theorem 1.7 of \cite{Dd}  that there is a unitary $u\in B$ such that
\beq\label{C1-10}
\|\phi(f|_X)-u^*\phi_1(f|_X)u\|<\ep/4\rforal f\in {\cal F}.
\eneq

Applying Theorem \ref{T-1fs}, there is a unital \hm\, $\psi_{00}: C(Y)\to \Phi(O_2)\subset pBp$ with finite dimensional 
range such that
\beq\label{C1-11}
\|\psi_0(g|_Y)-\psi_{00}(g|_Y)\|<\ep/4\rforal g\in {\cal F}.
\eneq
We may write $\psi_{00}(g)=\sum_{j=1}^K g(y_j) d_i$
for all $g\in C(Y),$ where $y_j\in Y$ and $d_1, d_2,...,d_K$
are mutually orthogonal non-zero projections  in $\Phi(O_2)$ with $p=\sum_{j=1}^K d_j.$
By choosing a better approximation if necessary, without loss of generality, 
we may assume that $\{y_1, y_2,...,y_K\}$ is $\eta_0$-dense in $Y,$
$K\ge m.$ 
Recall that $\{\xi_1, \xi_2,...,\xi_n\}$ is $\eta_0$-dense in $X$
and $r$-dense in $Y.$
For each $i,$ choose exactly one $y_{(i,1)}\subset \{y_1, y_2,...,y_K\}$
such that ${\rm dist}(y_{(i,1)}, \xi_i)<\eta_0.$
Then, we may assume 
$\cup_{i=1}^m\{y_{(i, j)}: 1\le j\le k(i)\}=\{y_1, y_2,...,y_N\},$
where $\{y_{(i,j)}: 1\le j\le k(i), 1\le i\le m\}$ is a set of distinct points 
such that 
\beq
{\rm dist}(y_{(i,j)}, \xi_i)<\dt_1+2\eta_0<\dt,\,\,\,1\le j\le k(i),\,\,\,1\le i\le n.
\eneq 
Note that we view $(i,j)\in \{1,2,..,N\}.$ Then set 
$q_i'=\sum_{j=1}^{k(i)}d_{(i,j)}.$ 
Put $\psi_{00}': C(Y)\to \Phi(O_2)\subset pBp,$ where 
$\psi_{00}'(g)=\sum_{i=1}^m g(\xi_i)q_i'$ for all $g\in C(Y).$

By the choice of $\dt,$ 
we have, for all $g\in {\cal F},$
\beq\label{C1-12}
\|\psi_{00}(g|_Y)-\psi_{00}'(g|_Y)\|<\ep/4.
\eneq
Recall that $p_i, q_i'\in \Phi(O_2).$ Hence $[p_i]=[q_i']=0$ in $K_0(B).$
There are partial isometries 
$w_i\in pB(u^*pu)$ such that $w_i^*w_i=u^*p_iu$ and $w_iw_i^*=q_i',$ $i=1,2,...,m.$ 
Define $v=(1-p)u\oplus \sum_{i=1}^m w_i.$ 
Then 
\beq\label{C1-13}
v^*v&=&u^*(1-p)u\oplus \sum_{i=1}^m u^*p_iu=u^*(1-p)u+u^*pu=1\andeqn\\
vv^*&=&(1-p)uu^*(1-p)\oplus \sum_{i=1}^m q_i'=(1-p)+\sum_{i=1}^Kd_i=1.
\eneq
So $v$ is a unitary in $B.$
Note that
\beq\label{C1-14}
v^*\psi_{00}'(g)v=v^*(\sum_{i=1}^n g(\xi_i)q_i')v=
\sum_{i=1}^n g(\xi_i) u^*p_iu=u^*\phi_0(g|_X)u \rforal g\in C(Y).
\eneq
Then, combining with \eqref{C1-11} and \eqref{C1-12}, for all $g\in {\cal F},$ 
\beq\nonumber
\|v^*\psi_0(g|_Y)v-u^*\phi_0(g|_X)u\|&=&\|v^*\psi_0(g|_Y)v-v^*\psi_{00}(g|_Y)v\|+
\|v^*\psi_{00}(g|_Y)v-v^*\psi_{00}'(g|_Y)v\|\\\label{C1-15+}
&<&\ep/4+\ep/4=\ep/2.
\eneq
Define $\psi: C(Y)\to B$ by 
\beq\label{C1-15}
\psi(g)=v^*(w^*\phi(g|_X)w+\psi_0(g|_Y))v
\rforal g\in C(Y).
\eneq  Then $\psi$ is a unital injective \hm.
Moreover, for all $g\in {\cal F},$ by \eqref{C1-10}, the definition of $\phi_1$
and $v$  above and by \eqref{C1-15+}, 
\beq\nonumber
\|\phi(g|_X)-\psi(g|_Y)\|&=&\|\phi(g|_X)-u^*\phi_1(g|_X)u\|+\|u^*\phi_1(g|_X)u-\psi(g|_Y)\|\\\nonumber
&<&\ep/4+\|u^*w^*\phi(g|_X)wu\oplus u^*\phi_0(g|_X)u-v^*(w^*\phi(g|_X)w\oplus \psi_0(g|_Y))v\|\\\nonumber
&=&\ep/4+\|u^*w^*\phi(g|_X)wu\oplus u^*\phi_0(g|_X)u-v^*((1-p)w^*\phi(g|_X)w(1-p)\oplus \psi_0(g|_Y))v\|\\\nonumber
&=&\ep/4+\|u^*w^*\phi(g|_X)wu\oplus u^*\phi_0(g|_X)u-u^*w^*\phi(g|_X)wu\oplus v^*\psi_0(g|_Y))v\|\\
&=&\ep/4+\|u^*\phi_0(g|_X)u-v^*\psi_0(g|_Y)v\|<\ep/4+\ep/2<\ep.
\eneq
To see the second formula in \eqref{C1-f1}, 
we may write, from \eqref{C1-15}, since $\psi_0$ factors through $\Phi(O_2),$
\beq
[\psi]=[\phi\circ \pi_X]+[\psi_0]=[\phi\circ \pi_X] \,\,\, {\rm in}\,\,\, KK(C(Y), B).
\eneq
\end{proof}

\begin{cor}\label{C1C}
Fix an integer $n\in \N$ and $1>\ep>0.$

Suppose that $X$ is  a compact subset of\, $\I^n,$ 
$B$ is a unital purely infinite simple \CA\, and 
$\phi: C(X)\to B$ is a unital injective \hm. 
If $Y$ is any compact subset with property (F) such that
$X\subset Y\subset X_\ep,$  then there is  
a unital injective \hm\, $\psi: C(Y)\to B$ such that
\beq\label{C1C-f1}
\|\phi(e_j|_X)-\psi(e_j|_X)\|<\ep\tand
\,\,\, [\psi]=[\phi\circ \pi_X]\,\,\,{\rm in}\,\,\, KK(C(Y), B),
\eneq
where  $\pi_X: C(Y)\to C(X)$ is the quotient map by restriction ($\pi(g)=g|_X$ for all $g\in C(Y)$).
\end{cor}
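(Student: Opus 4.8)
The plan is to derive Corollary \ref{C1C} from Theorem \ref{C1} by taking the finite set there to be ${\cal F} = \{e_1|_X, e_2|_X, \ldots, e_n|_X\}$ and exploiting that the coordinate functions $e_j$ are $1$-Lipschitz on $\I^n$. The one point that is not a black-box application is the tolerance: Theorem \ref{C1}, as proved, yields a $\delta$ strictly smaller than $\ep$ (its proof even requires $\delta < \min\{\ep/2, 1/8\}$), whereas the corollary asks for every $Y$ with $X \subset Y \subset X_\ep$. I would handle this by reopening the proof of Theorem \ref{C1} and checking that the only place the choice of ${\cal F}$ enters is a modulus-of-continuity comparison, which is vacuous for $1$-Lipschitz functions. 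First I would observe that, since $Y$ is compact and contained in the open set $X_\ep$, the maximum of the continuous function $y \mapsto {\rm dist}(y, X)$ on $Y$ is some number $< \ep$; picking $\delta_1$ strictly between that maximum and $\ep$, we get $X \subset Y \subset X_{\delta_1}$ with $\delta_1 < \ep$.

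Next I would run the construction in the proof of Theorem \ref{C1} with ${\cal F} = \{e_j|_X\}$, choosing the auxiliary parameters in terms of $\delta_1$: fix a net mesh $\eta_0 > 0$ and accuracies $\alpha > 0$ for the uniqueness/absorption step (Theorem~1.7 of \cite{Dd}, which produces the unitary $u$) and $\beta > 0$ for the finite-dimensional approximation of the $O_2$-valued map (Theorem \ref{T-1fs}), small enough that $\delta_1 + 2\eta_0 + \alpha + \beta < \ep$; this is possible precisely because $\delta_1 < \ep$. One then builds, verbatim as in Theorem \ref{C1}: a projection $p$ with $[p] = 0$ and a unital embedding $\psi_0 : C(Y) \to \Phi(O_2) \subset pBp$ (Theorem~2.8 of \cite{KP}); an $\eta_0$-net $\{\xi_i\} \subset X$; the finite-dimensional maps $\phi_0$ on $C(X)$ and $\psi_{00}, \psi_{00}'$ on $C(Y)$, whose supporting points lie within distance $\delta_1 + 2\eta_0$ of the $\xi_i$; a unitary $u$ from the uniqueness theorem (accuracy $\alpha$ on ${\cal F}$); a unitary $v$ assembled from partial isometries absorbing the finite-dimensional pieces; and finally $\psi(g) := v^*(w^*\phi(g|_X)w + \psi_0(g))v$ for $g \in C(Y)$, where $w$ is the coisometry with $w^*w = 1 - p$ and $ww^* = 1$.

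The estimate then follows as in Theorem \ref{C1}, with the simplification that, in the final chain of inequalities there, the only term depending on ${\cal F}$ is the comparison of $\psi_{00}$ with $\psi_{00}'$, which replaces each supporting point $y_j \in Y$ by a net point $\xi_{i(j)} \in X$ with ${\rm dist}(y_j, \xi_{i(j)}) < \delta_1 + 2\eta_0$; for $g = e_l$ this changes the value by at most that Euclidean distance, contributing at most $\delta_1 + 2\eta_0$, while the other two contributions are $\alpha$ and $\beta$. Hence $\|\phi(e_l|_X) - \psi(e_l|_Y)\| < \alpha + \beta + \delta_1 + 2\eta_0 < \ep$ for each $l$. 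The remaining assertions are exactly as in Theorem \ref{C1}: $\psi$ is unital since $\psi(1) = v^*((1-p)+p)v = v^*v = 1$, it is injective since $\psi_0$ is, and $[\psi] = [\phi \circ \pi]$ in $KK(C(Y), B)$ because $\psi_0$ factors through $O_2$ (hence is $KK$-trivial) while $g \mapsto w^*\phi(g|_X)w$ has the same $KK$-class as $\phi \circ \pi$, the coisometry $w$ realizing the corner isomorphism onto $(1-p)B(1-p)$.

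The main obstacle is purely bookkeeping: one must be certain that no step in the proof of Theorem \ref{C1} imposes a constraint on $\delta$ beyond $\delta < \ep$ — in particular that the auxiliary bound $\delta < 1/8$ there is a convenience (used only to keep various balls inside $\I^n$ for a general ${\cal F}$) and not essential, and that the error budget $\alpha + \beta + \delta_1 + 2\eta_0$ can be driven below $\ep$ while still leaving room to choose the $\eta_0$-net and the finite-dimensional approximations. Once this is verified, the corollary is immediate.
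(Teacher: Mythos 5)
Your argument is correct and is the paper's own approach: re-run the proof of Theorem \ref{C1} with ${\cal F}=\{e_j|_X:1\le j\le n\}$, using that each coordinate function $e_j$ is $1$-Lipschitz on $\I^n$. In fact your version is the more careful one. The paper's two-line proof simply sets $\dt=\ep/8$ and appeals to Theorem \ref{C1} ``with $\dt=\ep/8$,'' but that only covers compact sets $Y$ with $X\subset Y\subset X_{\ep/8}$, whereas the corollary as written allows $Y\subset X_\ep$. Your observation that the only ${\cal F}$-dependent error is the supporting-point perturbation $\psi_{00}\leadsto\psi_{00}'$, contributing at most $\dt_1+2\eta_0$ for the $1$-Lipschitz functions $e_l$, together with replacing the rigid choice $\eta_0=(\dt-\dt_1)/4$ and the two fixed $\ep/4$ tolerances (from Theorem~1.7 of Dadarlat and from Theorem \ref{T-1fs}) by freely chosen $\eta_0,\alpha,\beta$ with $\dt_1+2\eta_0+\alpha+\beta<\ep$, is exactly what is needed to close the error budget; it works because compactness of $Y$ in the open set $X_\ep$ lets you take $\dt_1$ strictly below $\ep$. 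Your caution about the ancillary bound $\dt<1/8$ in the proof of Theorem \ref{C1} is also resolved favorably: that bound is not actually used anywhere in the estimates, so nothing fails when $\dt_1$ is only bounded by $\ep<1$.
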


\begin{proof}
At the beginning of the proof of Theorem \ref{C1},  choose 
${\cal F}=\{e_j: 1\le j\le n\}.$
Then, with $\dt:=\ep/8,$ when ${\rm dist}(\xi, \xi')<2\dt,$
\beq
|e_j(\xi)-e_j(\xi')|<\ep/4.
\eneq
Then the rest of the proof of Theorem \ref{C1} applies (with $\dt=\ep/8$).
\end{proof}

In the proof of Theorem \ref{C1}, we may choose $Y$ to be a $1/k$-brick combination
with $\sqrt{n}/2k<\dt$ (the same $\dt$ in the proof of Theorem \ref{C1}).  Thus we may state a variation of Theorem \ref{C1} as follows.

\begin{cor}\label{Cc1}
Let  $n\in \N.$
Then, for any $\ep>0$ 
and any finite subset ${\cal F}\subset C(\I^n),$ there exists 
$0<\eta<\ep/2$ satisfying the following:
Suppose that  $B$ is a purely infinite simple \CA\, and $\phi: C(X)\to B$ is a unital injective \hm,
where $X\subset \I^n$ is a compact subset.   
For any  integer $k\in \N$ with $\sqrt{n}/k<\eta,$ 
there exists 
a unital injective \hm\, $\psi: C(\mathtt{B}_X^k)\to B$ such that
\beq
\|\phi(f|_X)-\psi(f)\|<\ep\rforal f\in {\cal G}\tand [\psi]=[\pi\circ \phi]\,\,\,{\rm in}\,\,\, KK(C(\mathtt{B}_X^k), B),
\eneq
where  $\pi: C(\mathtt{B}_X^k)\to C(X)$ is the quotient map by restriction, 
${\cal G}\subset C(\mathtt{B}_X^k)$ is a finite subset such that 
${\cal F}\subset \{g|_X: g\in {\cal G}\}$ (and where $\mathtt{B}_X^k$ is a $1/k$-brick cover of $X$).
\end{cor}

\begin{thm}\label{s2-T1}
Let $\ep>0$ and $n\in \N.$  There is $\dt=\dt(n,\ep)>0$ satisfying the following:
If $A$ is a unital purely infinite simple \CA\, and  $s_1, s_2,...,s_n\in A_{s.a.}$ with $\|s_j\|\le 1$ ($1\le j\le n$) 
such that 
\beq
\|s_is_j-s_js_i\|<\dt,\,\,\, i,j=1,2,...,n,
\eneq
then there is integer $k\in \N$  with $2\sqrt{n}/k<\dt$ and $1/k\le \eta<\dt,$ 
and a $1/k$-brick combination $X\subset I^n$ 
and a unital injective \hm\, 
$\phi: C(X)\to A$ such that
\beq
\|\phi(e_j|_X)-s_j\|<\ep,\,\,\, j=1,2,...,n.
\eneq
\end{thm}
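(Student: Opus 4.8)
The plan is to derive Theorem \ref{s2-T1} by combining the ``near-spectrum exists'' machinery of Proposition \ref{Pappsp}, the rigidity of the synthetic-spectrum from Proposition \ref{Pspuniq}, and the absorption/discretization theorem for maps into a unital purely infinite simple \CA\ (Corollary \ref{Cc1}, resting on Theorem \ref{C1} and Theorem \ref{T-1fs}). More precisely, given $\ep>0$ and $n$, I would first fix a small auxiliary $\eta_0>0$ (to be specified), invoke Corollary \ref{Cc1}/Theorem \ref{C1} to get a constant $\dt_1$ governing how well a \hm\ from $C(X)$ with $X\subset\I^n$ compact can be perturbed onto a \hm\ from $C(\mathtt B^k_X)$ — a $1/k$-brick combination, hence a space with property (F) by Proposition \ref{PKfinite} — and then feed the output of Proposition \ref{Pappsp} (applied with tolerance $\eta_0$) into it. The final $\dt(n,\ep)$ will be the minimum of $\dt_1$, the constant $\dt(n,\eta_0)$ from Proposition \ref{Pappsp}, the constant from Proposition \ref{Pspuniq}, and something like $\ep/8$.

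The key steps, in order. First, with $\|s_is_j-s_js_i\|<\dt$ small, Proposition \ref{Pappsp}(2) produces a nonempty $\eta_0$-near-spectrum $X:=nSp^{\eta_0}((s_1,\dots,s_n))\subset\I^n$, i.e.\ a compact $X$ and a unital c.p.c.\ map $L:C(X)\to A$ with $\|L(e_j|_X)-s_j\|<\eta_0$ and $\|L(e_ie_j|_X)-L(e_i|_X)L(e_j|_X)\|<\eta_0$. Second — and this is the crux — I need to replace the merely c.p.c.\ map $L$ by an honest unital \emph{injective} \hm\ $\phi_0:C(X_0)\to A$, where $X_0\subset \I^n$ is compact, possibly a slight thickening of $X$, with $\|\phi_0(e_j|_{X_0})-s_j\|$ small. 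This is where purely infinite simplicity enters decisively: $A$ has real rank zero (Zhang), contains copies of $O_2$, and the map $L$ being approximately multiplicative on the generators $\{1,e_1|_X,\dots,e_n|_X\}$ forces (via Lemma \ref{Lpurb}) that it is approximately multiplicative on a large finite set of functions, so that a standard $O_2$-absorption / semiprojectivity-type argument for $C(Z)$ with $Z$ a finite CW complex upgrades $L$ to a genuine \hm\ on some nearby $C(X_0)$ (absorbing a copy of $C(X_0)\hookrightarrow O_2\subset A$ à la Kirchberg--Phillips, cf.\ the use of Theorem 2.8 of \cite{KP} in the proof of Theorem \ref{C1}). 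Alternatively one passes through the sequence-algebra $B/\bigoplus_k A_k$ as in Proposition \ref{Pappsp} to get an exact \hm\ and lifts it using weak semiprojectivity of $C$ of a finite CW complex. Third, having $\phi_0:C(X_0)\to A$ injective unital \hm\ with $X_0$ close to $X$, apply Corollary \ref{Cc1} (equivalently Theorem \ref{C1} with $Y=\mathtt B^k_{X_0}$ a $1/k$-brick combination, $\sqrt n/k$ small) to obtain a unital injective \hm\ $\psi:C(\mathtt B^k_{X_0})\to A$ with $\|\psi(e_j)-\phi_0(e_j|_{X_0})\|$ small; since $X_0$ and $X$ are $\eta_0$-close and $\sqrt n/k$ is small, $\mathtt B^k_{X_0}$ sits within $\dt$ of itself as required, and one arranges $2\sqrt n/k<\dt$, $1/k\le\eta<\dt$. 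Set $X:=\mathtt B^k_{X_0}$, $\phi:=\psi$. Finally chase the triangle inequality: $\|\phi(e_j|_X)-s_j\|\le\|\psi(e_j)-\phi_0(e_j|_{X_0})\|+\|\phi_0(e_j|_{X_0})-s_j\|<\ep$, provided $\eta_0$ and the Corollary \ref{Cc1} tolerance were chosen $<\ep/2$ each.

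The main obstacle, I expect, is Step 2: manufacturing an actual injective unital \hm\ on $C(X_0)$ out of the approximately multiplicative c.p.c.\ map $L$. A general compact $X\subset\I^n$ need not be semiprojective (or even have property (F)), so one cannot lift directly; the trick is precisely to \emph{not} demand the homomorphism be defined on $C(X)$ but on a thickening $C(X_0)$ — indeed on a brick combination $\mathtt B^k_X$, which by Proposition \ref{PKfinite} has finitely generated $K$-theory and, being a finite CW complex, is weakly semiprojective — and to use that the Calkin-like target $A$ (purely infinite simple) absorbs $O_2$ so that $K$-theoretic obstructions vanish and any two unital \hm s with the same $KK$-class are approximately unitarily equivalent (Theorem 1.7 of \cite{Dd}, Theorem \ref{T-1fs}). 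So in effect Step 2 and Step 3 merge: one goes straight from $L:C(X)\to A$ to a unital injective \hm\ $\psi:C(\mathtt B^k_X)\to A$ approximating $L$ on $\{e_j|_X\}$, by building $\psi$ as $w^*(L\text{-part})w\oplus(O_2\text{-part})$, exactly mirroring the construction in the proof of Theorem \ref{C1} but with the initial ``$\phi$'' replaced by the c.p.c.\ $L$ and the approximate multiplicativity absorbed at the cost of enlarging $X$ to $\mathtt B^k_X$. The bookkeeping of constants — threading $\eta_0$ through Proposition \ref{Pappsp}, Lemma \ref{Lpurb}, Corollary \ref{Ctheta} (to control $\Theta$'s if needed), and Corollary \ref{Cc1} — is routine but must be done carefully so that the single output $\dt(n,\ep)$ works; I would state it as a compactness/sequence-algebra argument: if the theorem fails there are $A_m$, tuples $s_{j,m}$ with vanishing commutators in the limit but no such $(X,\phi)$ with tolerance $\ep$, pass to $B/\bigoplus A_m$ where the limit tuple generates some $C(X_\infty)$, $X_\infty\subset\I^n$ compact, lift/absorb to get $\psi_\infty$ on $C(\mathtt B^k_{X_\infty})$ for suitable $k$, and pull back to a contradiction for large $m$.
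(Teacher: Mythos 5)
Your plan and the paper's proof converge only at the final step (applying Theorem~\ref{C1}/Corollary~\ref{Cc1} to pass to a $1/k$-brick combination); up to that point you take a genuinely different — and, as written, incomplete — route. The paper's proof is a two-liner in spirit: it invokes Corollary~1.20 of \cite{Linalm97}, which says directly that in a unital purely infinite simple \CA\, an $n$-tuple of self-adjoints with small commutators is norm-close to an $n$-tuple of \emph{exactly} commuting self-adjoints $a_1,\dots,a_n$. The commutative \SCA\ generated by the $a_j$ is $C(X_0)$ for a compact $X_0\subset\I^n$, the inclusion is a unital injective \hm, and Theorem~\ref{C1} then replaces $X_0$ by a $1/k$-brick cover. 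There is no near-spectrum, no c.p.c.\ map, and no upgrade step.

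The gap in your argument is exactly the step you flag as ``the crux'': manufacturing a genuine unital injective \hm\ out of the merely approximately multiplicative c.p.c.\ map $L$ coming from Proposition~\ref{Pappsp}. Neither of the two routes you sketch for closing it works as stated. The ``alternatively one lifts using weak semiprojectivity of $C$ of a finite CW complex'' is false for $n\ge 2$: $C(Z)$ for a $2$-dimensional finite CW complex (e.g.\ $C(\I^2)$ or $C(\T^2)$, cf.\ Voiculescu's example \cite{DV}) is \emph{not} weakly semiprojective, so almost multiplicative maps into a general \CA\ do not perturb to \hm s. And ``mirroring the construction in Theorem~\ref{C1} with $\phi$ replaced by the c.p.c.\ $L$'' cannot be done verbatim: Theorem~\ref{C1} and its engine, Theorem~1.7 of \cite{Dd}, take \emph{homomorphisms} as input and compare their $KK$-classes; an approximately multiplicative c.p.c.\ map has only a partially defined $KK$-datum, and turning that into a genuine \hm\ into a purely infinite simple \CA\ is precisely the content of the $O_2$-absorption / vanishing-obstruction result you would need — which is, in effect, Corollary~1.20 of \cite{Linalm97} (or the Gong--Lin machinery of Theorem~\ref{Tgl99}, which the paper reserves for the later Theorem~\ref{TTmul}). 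So your proposal does not so much prove Theorem~\ref{s2-T1} as reduce it to the very lemma the paper cites; the remaining steps (Proposition~\ref{Pappsp} $\Rightarrow$ $L$, then Corollary~\ref{Cc1} to thicken to a brick cover) are sound but the first is unnecessary once one has the citation, and the second matches the paper.
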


\begin{proof}
Let $\ep>0.$  It follows from Corollary 1.20 of \cite{Linalm97} that there exists $\dt_1>0$ such that
when  $\|s_is_j-s_js_i\|<\dt_1,$ there are $a_1, a_2,...,a_n \in A_{s.a.}$ 
such that
\beq
a_ia_j=a_ja_i\andeqn \|s_i-a_i\|<\ep/2\rforal 1\le i,\,j\le n.
\eneq
Let $C$ be the commutative \SCA\, generated by $a_1,a_2,...,a_n.$
Then there exists a unital \hm\,
$\phi_0: C(\I^n)\to C$ such that
$\phi_0(e_j)=a_j,$ $j=1,2,...,n.$
Hence $C\cong C(X_0)$ for some compact subset of $\I^n.$ 
Denote $\phi_1: C(X_0)\to C$  the isomorphism given above. We may 
assume that $\phi_1(e_j|{X_0})=a_j,$ $j=1,2,...,n.$ 

Put $\dt=\min\{\ep/2, \dt_1/2\}.$
Choose any $k\in \N$ with $2\sqrt{n}/k<\dt$ and $1/k\le \eta<\dt.$
By  Corollary \ref{Cc1},
 we obtain  a $1/k$-brick combination  $X\subset \I^n$ 
and a unital injective
\hm\, $\phi: C(X)\to B$   such that
\beq
\|\phi(e_j|_X)-a_j\|<\ep/2, \,\,\, j=1,2,...,n.
\eneq
Hence 
\beq
\|\phi(e_j|_X)-s_j\|<\ep,\,\,\, j=1,2,..,n.
\eneq
\end{proof}

\section{Multiple self-adjoint operators}
The main purpose of this section is to prove Theorem \ref{TTmul}.


The following lemma is a known (see the proof of Theorem 3.15 of \cite{BP}). 

\begin{lem}\label{Lprojlift}
Let $A$ be a unital \CA\, and $J$ be  an ideal of $A$ which is a $\sigma$-unital 
\CA\, of real rank zero.   Suppose that $q\in A/J$ is a projection 
such that $[q]\in \pi_{*0}(K_0(A)),$ where $\pi: A\to A/J$ is the quotient map.
Then there is a projection $p\in A$ such that $\pi(p)=q.$
%
\end{lem}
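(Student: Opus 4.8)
The plan is to reduce to the standard fact that in a $\sigma$-unital \CA\ of real rank zero, projections can be "matched up" and lifted, modulo the usual excision arguments for ideals. First I would pick an arbitrary self-adjoint lift $a\in A$ of $q$, so that $\pi(a)=q$ and hence $a-a^2\in J$. Applying functional calculus, replace $a$ by $f(a)$ for a suitable continuous function $f$ vanishing near $0$ and equal to $1$ near $1$; since $J$ has real rank zero it has an approximate identity of projections, and one can then find a projection $e\in J$ which is "close to $1$ on the part of the spectrum of $a$ near $1$". Concretely, I would use that $a-a^2\in J$ and real rank zero of $J$ to produce a projection $e\in J$ with $\|(1-e)(a-a^2)(1-e)\|$ arbitrarily small and $e$ commuting approximately with $a$; cutting down by $1-e$ yields a self-adjoint element $b=(1-e)a(1-e)$ with $b-b^2\in J$ of small norm and still $\pi(b)=q$. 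Iterating / using a quasicentral approximate identity of projections for $J$ in $A$, one arrives at an element whose spectrum is genuinely split away from $1/2$, i.e.\ a projection.

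The point where the $K$-theoretic hypothesis $[q]\in\pi_{*0}(K_0(A))$ enters is exactly in controlling the homotopy/Murray--von Neumann class: the above excision produces a projection $p_0\in A$ (in some matrix amplification, or after a cutdown) whose image $\pi(p_0)$ is Murray--von Neumann equivalent to $q$ but possibly not equal, and more seriously the construction a priori only lifts $q$ up to stabilization. To descend from $M_k(A)$ back to $A$ one needs $[\pi(p_0)]=[q]$ to lift to an actual equality of classes in $K_0$, which is where $[q]\in\pi_{*0}(K_0(A))$ is used: choose $x\in K_0(A)$ with $\pi_{*0}(x)=[q]$, represent $x$ by a formal difference of projections over $A$, and use that $J$ (being $\sigma$-unital of real rank zero, hence with cancellation properties inherited appropriately, or at least enough of them after stabilizing by $J$ itself) allows one to absorb the correction term into $J$. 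Then a unitary in $A/J$ conjugating $\pi(p_0)$ to $q$ can be lifted to a unitary in $A$ (using that $J$ has real rank zero, so $U(A/J)$'s relevant component lifts), and conjugating $p_0$ by that lift gives the desired projection $p\in A$ with $\pi(p)=q$.

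The main obstacle I expect is the bookkeeping in the stabilization/destabilization: getting from a lift over $M_k(A)$ down to a lift over $A$ itself, which is precisely what forces the hypothesis on the $K_0$-class rather than just "$q$ lifts to a projection over the stabilization." This is handled in the literature (the cited proof of Theorem 3.15 of \cite{BP}) by exploiting that $J$ of real rank zero has the relevant projection-lifting and cancellation behaviour; I would follow that line rather than reprove it. The functional-calculus step and the quasicentral-approximate-identity-of-projections step are routine and I would not spell them out in detail. So the structure is: (1) self-adjoint lift and functional calculus; (2) excise using a projection approximate identity of $J$ to get an honest projection $p_0$ in a matrix algebra over $A$; (3) use $[q]\in\pi_{*0}(K_0(A))$ together with real rank zero of $J$ to replace $p_0$ by a projection over $A$ with image unitarily equivalent to $q$; (4) lift the conjugating unitary through $\pi$ and conjugate to land exactly on $q$.
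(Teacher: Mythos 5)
The paper does not prove this lemma; it simply cites the proof of Theorem~3.15 of Brown--Pedersen, so deferring to that reference is consistent with what the paper does. But your reconstruction of the argument has a real gap at the exact step you declare routine. You propose to find a projection $e\in J$ with $\|(1-e)(a-a^2)(1-e)\|$ small \emph{and} $\|[e,a]\|$ small, and then cut down by $1-e$. If such an $e$ could be found, the proof would be finished with no appeal to the $K_0$-hypothesis at all: since $\pi(e)=0$ one has $\pi(b)=q$ for $b=(1-e)a(1-e)$, the two smallness conditions together force $\|b-b^2\|$ small, and the spectral projection $\chi_{(1/2,1]}(b)\in A$ then lifts $q$ exactly. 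Since $[q]\in\pi_{*0}(K_0(A))$ is a genuinely necessary condition --- it is the obvious obstruction, and the lemma asserts it is the only one, not that it is vacuous --- this plan cannot work as stated. Where it breaks is precisely the ``quasicentral approximate identity of projections'': $\sigma$-unitality of $J$ gives a quasicentral approximate unit, and real rank zero of $J$ gives an approximate unit consisting of projections, but a single net that is simultaneously quasicentral for $\{a\}$ \emph{and} made of projections need not exist, and its existence is exactly the sort of relative quasidiagonality that $K$-theory obstructs. That step is not routine; it is essentially the content of the lemma.

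Your closing numbered outline compounds this by inverting the order of ideas: you have step~(2) produce a projection $p_0$ in a matrix algebra by excision alone, and step~(3) use $K_0$ only to descend from the amplification. In the Brown--Pedersen argument the $K_0$-hypothesis is used \emph{first}: one realizes $[q]$ by a formal difference of projections over $A$, stabilizes so that the Murray--von~Neumann equivalence in $A/J$ is implemented by a unitary in the connected component of the identity, lifts that unitary, and thereby produces an honest projection in a matrix algebra over $A$ whose image under $\pi$ is $q$ padded by projections that already lift. Only then is the excision with a projection approximate unit of $M_k(J)$ carried out, and it is applied to genuine projections on the $A$ side --- not to an arbitrary self-adjoint lift $a$ of $q$, where it is unavailable. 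If you reorder your steps along those lines the outline becomes sound; as written, the hard step is presented as free.
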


\begin{lem}\label{Ldig}
Fix $n\in \N.$ For any $\ep>0,$ there exists $\dt(n,\ep)>0$ satisfying the following:
Suppose that  $A$ is a unital \CA\, with an ideal $J$ such that:

1. $J$ is separable and has real rank zero;

2. the quotient $A/J$ is purely infinite ans simple.

3. 
$T_1, T_2,...,T_n\in A_{s.a.}$ with $\|T_i\|\le 1$ ($1\le i\le n$);

4. the $n$-tuple has 
 a $\dt$-near-spectrum  $X$ and  $(\pi(T_1), \pi(T_2),...,\pi(T_n))$ has a
$\dt$-spectrum $Y.$
Then,  there are mutually orthogonal non-zero projections 
$p_1, p_2,...,p_m\in A\setminus J,$ 
a $\dt$-dense  subset  $\{\lambda_1, \lambda_2,...,\lambda_m\}$ in $Y,$   and 
a c.p.c. map $L: C(X)\to (1-p)A(1-p),$ where $p=\sum_{k=1}^m p_k,$
such that
\beq
&&\|\sum_{k=1}^m e_j(\lambda_k)p_k+L(e_j|_X)-T_j\|<\ep,\\\label{dig-01}
&&\|L(e_ie_j|_X)-L(e_i|_X)L(e_j|_X)\|<\ep,\,\,\, 1\le i, j\le n,
\eneq
and $1-p\not\in J.$
\end{lem}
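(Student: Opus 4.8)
The plan is to combine the existing $\dt$-spectrum/$\dt$-near-spectrum machinery with the "finite-dimensional approximation in a purely infinite simple quotient" result of Theorem \ref{T-1fs}, and then lift the resulting finite set of projections from $A/J$ back up to $A$ using Lemma \ref{Lprojlift}. First I would set up the constants: given $\ep>0$, choose auxiliary thresholds so that (i) Theorem \ref{C1}/Corollary \ref{C1C} applies to produce, from the $\dt$-spectrum $Y$ of $(\pi(T_1),\dots,\pi(T_n))$, a compact set $Y'$ with property (F) (a $1/k$-brick combination) with $Y\subset Y'\subset Y_{\dt'}$, and a unital injective $\ast$-homomorphism $\psi\colon C(Y')\to A/J$ with $\|\psi(e_j|_{Y'})-\pi(T_j)\|<\ep/4$; (ii) Proposition \ref{Pspuniq} controls the relationship between $X$, $Y$, and $s\mathrm{Sp}^{\eta}$ so that the various spectra are all within small Hausdorff distance and $Y\subset \overline{X_{\ep/8}}$, ensuring the points we eventually pick are $\ep$-close to points relevant for $X$. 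Since $A/J$ is purely infinite simple, Theorem \ref{T-1fs} applied to $\psi$ (note $K_*(C(Y'))$ is finitely generated by Proposition \ref{PKfinite}, and $[\psi^\omega]$ can be arranged to vanish, or one simply absorbs an $O_2$-factor as in the proof of Theorem \ref{C1} to kill the $KK$-class) yields mutually orthogonal projections $\bar p_1,\dots,\bar p_m\in A/J$ with $\sum\bar p_k=1_{A/J}$, points $\lambda_1,\dots,\lambda_m\in Y'$ which we may take $\dt$-dense, and $\bar\psi(f)=\sum_k f(\lambda_k)\bar p_k$ with $\|\bar\psi(e_j|_{Y'})-\pi(T_j)\|<\ep/2$.

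Next I would lift. Since $A/J$ is purely infinite simple, each $\bar p_k$ may be replaced by a subprojection with $[\bar p_k]=0$ in $K_0(A/J)$ — but here I want to keep $\sum \bar p_k = 1$, so instead I would note $[\bar p_k]\in K_0(A/J)=\pi_{*0}(K_0(A))$ is not automatic; the cleaner route is: drop one projection, say $\bar p_m$, and on the "remainder" $1-\sum_{k<m}\bar p_k$ we will put the c.p.c. map $L$, so I only need to lift $\bar p_1,\dots,\bar p_{m-1}$, each after shrinking to have trivial $K_0$-class (hence in $\pi_{*0}(K_0(A))$ since $0$ lifts), and re-absorb the discarded mass into the "near-spectrum" piece. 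By Lemma \ref{Lprojlift} (applied successively in corners, since $J$ has real rank zero), I obtain mutually orthogonal projections $p_1,\dots,p_{m'}\in A$ with $\pi(p_k)=\bar p_k$; these are nonzero and not in $J$ because their images are nonzero. Set $p=\sum p_k$; then $1-p\notin J$ since $\pi(1-p)=1-\sum\bar p_k$ is a nonzero (in fact infinite, nonzero) projection in the simple algebra $A/J$.

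Finally, to construct $L$: on the corner $(1-p)A(1-p)$, note that $\pi$ restricted there maps onto $(1-\bar p)(A/J)(1-\bar p)$, which is again purely infinite simple, and $\pi((1-p)A(1-p))$ contains $\pi(T_j)-\sum_k e_j(\lambda_k)\bar p_k$ compressed appropriately. I would apply the Choi–Effros lifting theorem (\cite{CE}), exactly as in the proof of Proposition \ref{Pappsp}, to the unital c.p.c. map $C(X)\to (1-\bar p)(A/J)(1-\bar p)$ that is the relevant compression of the near-spectrum data, obtaining a c.p.c. lift $L\colon C(X)\to (1-p)A(1-p)$; then $\pi\circ L$ has the approximate-multiplicativity property \eqref{dig-01} modulo $J$, and since $J$ has real rank zero (so we can cut down by projections in $J$ to pass from "modulo $J$" to an honest estimate, quasicentrally), we upgrade to the norm estimate \eqref{dig-01} in $A$ after possibly enlarging $p$ by a projection in $J$ — this does not affect $1-p\notin J$. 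Combining $\sum_k e_j(\lambda_k)p_k$ (the lifted diagonal part, which approximates the compression of $T_j$ to $pAp$ modulo $J$, hence modulo a projection in $J$) with $L(e_j|_X)$ gives $\|\sum_k e_j(\lambda_k)p_k + L(e_j|_X) - T_j\|<\ep$, using the triangle inequality through all the $\ep/4$–$\ep/2$ estimates and the fact that everything is compatible modulo $J$ which is absorbed by quasicentral approximate units.

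The main obstacle I anticipate is the passage from estimates valid "modulo $J$" (where $J$ has real rank zero but is merely $\sigma$-unital, not necessarily with continuous scale) to genuine norm estimates in $A$ while keeping the projections $p_k$ genuinely lifted and $1-p$ genuinely outside $J$: this requires carefully choosing a quasicentral approximate unit of $J$ consisting of projections (available since $J$ has real rank zero) that commutes approximately with the finitely many relevant elements, cutting down, and checking that the small perturbations introduced do not destroy the orthogonality or the "$\notin J$" conclusions — essentially the Brown–Pedersen-style argument referenced for Lemma \ref{Lprojlift}, but now carried out simultaneously for the diagonal part and the c.p.c. remainder.
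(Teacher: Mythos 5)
Your proposal has a genuine gap at the step where you apply Theorem~\ref{T-1fs}. That theorem requires $[\psi^\omega]=0$ in $KK(C_0(\Omega(Y')),A/J)$, and this hypothesis does not hold in general. The $\psi$ you obtain from Corollary~\ref{C1C} satisfies $[\psi]=[\phi\circ\pi]$, where $\phi\colon C(X)\to A/J$ is the unital monomorphism coming from the $\dt$-spectrum assumption; its $KK$-class is an honest invariant of the data (it encodes the Fredholm-index information that the rest of the paper revolves around) and is typically nonzero. Absorbing an $O_2$-factor does not ``kill'' it: in the proof of Theorem~\ref{C1}, the $O_2$-summand $\psi_0$ has trivial $KK$-class, so $[w^*\phi(\cdot)w\oplus\psi_0]=[\phi\circ\pi]+0=[\phi\circ\pi]$; the class survives. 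Indeed, if one could always replace $\pi(T_j)$ by a finite-spectrum commuting tuple $\bar\psi(e_j)=\sum_k e_j(\lambda_k)\bar p_k$ to within $\ep/2$, the normal element $\pi(T_1)+i\pi(T_2)$ would always be close to a normal with finite spectrum in $A/J$, which is exactly what fails when the index is nonzero (cf.\ the example after Theorem~\ref{MT-pair} and Remark~\ref{RR}). So the conclusion you draw from Theorem~\ref{T-1fs} is too strong and cannot be reached this way.

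What the paper does instead avoids the $KK$-vanishing requirement entirely. It builds, by hand, a homomorphism $\psi(f)=\sum_{k=1}^m f(\lambda_k)q_k + w\phi(f)w^*$ with $q_k$ small projections of class zero and $w$ an isometry, so that $[\psi]=[\phi]$ in $KK$ \emph{by construction} (no vanishing needed), and then invokes Dadarlat's stable uniqueness theorem (Theorem~1.7 of \cite{Dd}) --- not Theorem~\ref{T-1fs} --- to conjugate $\psi$ approximately to $\phi$. The point of the lemma is precisely that the conclusion is \emph{weaker} than a finite-dimensional approximation: the residual piece $L$ is only a c.p.c.\ map that is approximately multiplicative, and the remaining $KK$-obstruction is carried by this $L$; it is later disposed of in Theorem~\ref{TTmul} by the $GL$ stable-uniqueness machinery (Theorem~\ref{Tgl99}). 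Your plan tries to dispose of everything already at this stage, which is impossible without assuming the index vanishes. A secondary (fixable, but worth noting) issue is that your route reconstructs $L$ by lifting a c.p.c.\ map from $A/J$ via Choi--Effros, whereas the near-spectrum hypothesis already hands you a c.p.c.\ map $\Phi\colon C(X)\to A$, and the paper simply sets $L(f)=(1-p)\Phi(f)(1-p)$, letting a quasicentral approximate identity of projections in $J$ handle the compatibility; this is cleaner and avoids extra bookkeeping.
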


\begin{proof}
Let $\ep>0.$
Choose $\dt=\ep/128.$
Since $X$ is a $\dt$-near-spectrum of $(T_1,T_2,...,T_n),$
by Definition \ref{Dappsp},
there is a unital c.p.c. map 
$\Phi: C(X)\to A$
such that
\beq\label{dig-5}
\|\Phi(e_j|_X)-T_j\|<\dt \andeqn
\|\Phi(e_ie_j|_X)-\Phi(e_i|_X)\Phi(e_j|_X)\|<\dt,\,\,\, i,j\in \{1,2,...,n\}.
\eneq
Since $Y$ is a $\dt$-spectrum of $(\pi(T_1),\pi(T_2),..., \pi(T_n)),$
there exists, by Definition \ref{Dappsp} and by Corollary \ref{C1C},  a compact subset $Z\subset \I^n$ such that
$Y\subset Z\subset Y_\dt$ and  a unital monomorphism $\phi: C(Z)\to A/J$
such that
\beq\label{dig-6}
\|\phi(e_j|_Y)-\pi_c(T_j)\|<\dt,\,\,\, j=1,2,...,n.
\eneq

Let $e'\in A/J$ be a non-zero projection such that $1-e'\not=0.$
Choose a partial isometry 
$w\in A/J$ such that
\beq
w^*w=1\andeqn ww^*=e\le e'.
\eneq
It follows that $[1-e]=0$ in $K_0(A/J).$
Let $\{\lambda_1, \lambda_2,...,\lambda_m\}$ be 
 a $\dt$-dense subset of $Y$ which is also $2\dt$-dense in $Z.$ 
 Let $q_1, q_2,...,q_m$ be mutually orthogonal non-zero projections in $(1-e)(A/J)(1-e)$
 such that $[q_i]=0$ in $K_0(A/J).$
 Define $\psi: C(Z)\to A/J$ 
 by
 \beq
 \psi(f)=\sum_{k=1}^m f(\lambda_k)q_k+w\phi(f)w^*\rforal f\in C(Z).
 \eneq
 Note that $[\phi]=[\psi]$ in $KK(C(Z), A/J).$
Then, since $A/J$ is purely infinite and simple, by Theorem 1.7 of \cite{Dd}, there exists a unitary 
$u\in A/J$ such that
\beq\label{dig-10}
\|u\psi(e_j|_Z)u^*-\phi(e_j|_Z)\|<\dt/16\rforal j=1,2,...,n.
\eneq
Let $q_k'=uq_ku^*,$ $k=1,2,...,n,$ and 
$$
\phi_0: C(Z)\to ueu^*(A/J)ueu^*
\,\,\,{\rm by}\,\,\,
\phi_0(f)=uw\phi(f)w^*u^*\rforal f\in C(Z).
$$
Then  we may write 
\beq
u\psi(f)u^*=\sum_{k=1}^m f(\lambda_k)q_k'+\phi_0(f)\rforal f\in C(Y).
\eneq
Since $[q_k']=0$ in $K_0(A/J),$  by applying Lemma \ref{Lprojlift}, we obtain projections 
$P_k\in A$ such that 
\beq
&&\pi_c(P_k)=q_k',\,\,\, k=1,2,...,m, \,\,\, \pi_c(1-\sum_{k=1}^m P_k)=ueu^*\andeqn\\
&& P_kP_{k'}=P_{k'}P_k=0,\,\,\,{\rm if}\,\,\, k\not=k'.
\eneq
Let $P=\sum_{k=1}^m P_k.$ Then $P\in A\setminus J$ is a projection.
By the Choi-Effros lifting theorem (\cite{CE}), there is a c.p.c. map 
$\Psi: C(Y)\to (1-P)A(1-P)$ such that $\pi_c \circ \Psi=\phi_0.$

Then, by \eqref{dig-5}, \eqref{dig-6}, and \eqref{dig-10}, there are $h_j\in J,$ $j=1,2,...,n,$ such that
\beq\label{Ldig-20}
\|\Phi(e_j|_X)-(\sum_{k=1}^m e_j(\lambda_k)P_k+\Psi(e_j|_Z))-h_j\|<2\dt+\dt/16,\,\,\, j=1,2,...,n.
\eneq

Let $\{d_{k, l}\}$ be an approximate identity for  $P_kJP_k,$ $k=1,2,...,m,$
and $\{d_{0, l}\}$ an approximate identity for $(1-P)J(1-P)$ consisting of projections.
Put 
$$
d_l=d_{0,l}+\sum_{k=1}^m d_{k,l}\andeqn
d_l'=\sum_{k=1}^m d_{k,l},\,\,\,l\in \N.
$$ Then 
$\{d_l\}$ is an approximate identity for $J$ 
and $\{d_l'\}$ is an approximate identity for $PJP.$
Choose $l$ such that
\beq
\|(1-d_l)h_j\|<\dt/64\andeqn \|h_j(1-d_l)\|<\dt/64,\,\,\, j=1,2,...,n.
\eneq
Hence, since $P-d_l'\le 1-d_l,$ 
\beq
\|(P-d_l')h_j\|<\dt/64\andeqn \|h_j(P-d_l')\|<\dt/64,\,\,\, j=1,2,...,n.
\eneq
Put $p_k=P_k-d_{k,l},$ $k=1,2,...,m.$
Then, by \eqref{Ldig-20},  for $j=1,2,...,n,$
\beq
(P-d_l')\Phi(e_j|_X)&&\approx_{33\dt/16} (P-d_l')(\sum_{k=1}^m e_j(\lambda_k)P_k+\Psi(e_j|_Y)+h_j)\\
&&\approx_{\ep/64}\sum_{k=1}^m e_j(\lambda_k)p_k.
\eneq
Similarly, for $j=1,2,...,n,$
\beq
\Phi(e_j|_X)(P-d_l')&&\approx_{33\dt/16} (\sum_{k=1}^m e_j(\lambda_k)P_k+\Psi(e_j|_Y)+h_j)(P-d_l')\\
&&\approx_{\dt/64}\sum_{k=1}^m e_j(\lambda_k)p_k.
\eneq
Put $p=P-d_l'.$ 
Then
\beq
&&\|p\Phi(e_j|_X)-\sum_{k=1}^m e_j(\lambda_k)p_k\|<33\dt/16+\dt/64,\\
&&\|p\Phi(e_j|_X)p-\sum_{k=1}^m e_j(\lambda_k)p_k\|<33\dt/16+\dt/64\andeqn\\\label{NNN-1}
&&\|p\Phi(e_j|_X)-\Phi(e_j|_X)p\|<33\dt/8+\dt/32,\,\,\,\,\,\, j=1,2,...,n.
\eneq
Put $L(f)=(1-p)\Phi(f)(1-p)$ for $f\in C(X).$ Then it is a c.p.c. map. Moreover
\beq
\Phi(e_j|_X)&=&p\Phi(e_j|_X)+(1-p)\Phi(e_j|_X)\\\label{NNN}
&&\approx_{33\dt/8+3\dt/64} \sum_{k=1}^m e_j(\lambda_k)p_k
+(1-p)\Phi(e_j|X)(1-p).
\eneq
Hence, using the inequality above and \eqref{dig-5}, 
\beq\nonumber
&&\hspace{-0.5in}\|\sum_{k=1}^m e_j(\lambda_k)p_k+L(e_j|_X)-T_j\|
\le \|\sum_{k=1}^m e_j(\lambda_k)p_k+L(e_j|_X)-\Phi(e_j|_X)\|+
\|\Phi(e_j|_X)-T_j\|\\
&&\hspace{1.5in}<33\dt/8+3\dt/64+\dt<\ep,\,\,\,\,\,\, j=1,2,...,n.
\eneq
To arrange  $(1-p)\not\in J,$ we may split $p_1.$ We write $p_1=p_1'+p_1'',$
where $p_1', p_1''\in p_1Jp_1$ are two mutually orthogonal projections and both 
are not in $J$ as $p\not\in {\cal K}.$
Then define $L': C(X)\to (1-p+p_1'')A(1-p+p_1'')$ by
$L'(f)=f(\lambda_1)p_1''+L(f)$  for all $f\in C(X).$ 
We then replace $L$ with $L'$ and 
replace $p_1$ with  $p_1'.$

Finally,  by \eqref{NNN-1} and \eqref{dig-5}, 
\beq\nonumber
&&\hspace{-0.3in}L(e_ie_j|_X)-L(e_i|_X)L(e_j|_X)=(1-p)\Phi(e_ie_j|_X)(1-p)-(1-p)\Phi(e_i|_X)(1-p)\Phi(e_j|_X)(1-p)\\\nonumber
&&\hspace{0.3in}\approx_{33\dt/8+\dt/32}(1-p)(\Phi(e_i|_X)-\Phi(e_i|_X)\Phi(e_j|_X))(1-p)\approx_\dt 0.
\eneq
Thus \eqref{dig-01} also holds.
\end{proof}

The following proposition is a list of easy facts. We refer to Definition 2.1 of \cite{GL}
for the definition of these properties.  It is certainly known that $M(A\otimes {\cal K})$ 
has these properties and some of these may also stated in various places. We put here for our convenience.

\begin{prop}\label{Pkkr}
Let $A$ be a $\sigma$-unital \CA. Then $M(A\otimes {\cal K})$ has the following property

(1) $K_0$-$r$-cancellation with $r(n)=1$ for all $n\in \N;$ 

(2)  $K_1$-$r$-cancellation with $r(n)=1$ for all $n\in \N;$

(3) $K_1$-stable rank 1;

(4) $K_0$-stable rank 1;

(5) stable exponential length $b=3\pi;$

(6) stable exponential rank $4;$

(7) $K_0$-divisible rank $T(n,m)=1$ (for $(n,m)\in \N^2$);

(8) $K_1$-divisible rank $T(n,m)=1$ (for $(n,m)\in \N^2$), and 

(9) exponential length divisible rank $E(r, n)=3\pi$ (for all $r\in \R_+, n\in \N.$).

\end{prop}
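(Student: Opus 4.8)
The plan is to deduce all nine items from two structural features of $R:=M(A\otimes {\cal K})$, after which each of the properties in Definition~2.1 of \cite{GL} reduces to a short verification. The first feature is \emph{stability‑absorption}: since $A\otimes {\cal K}$ is stable, $M_n(A\otimes {\cal K})\cong A\otimes M_n\otimes {\cal K}\cong A\otimes {\cal K}$, and hence $M_n(R)=M(M_n(A\otimes {\cal K}))\cong R$ for every $n\in \N$; this is exactly why no genuine passage to matrix amplifications ever occurs, so every rank in (1)--(4), (7), (8) can only be the constant $1$. The second feature is that $R$ is \emph{properly infinite with an abundant supply of isometries}: fixing an identification ${\cal K}\cong {\cal K}\otimes {\cal K}$ produces isometries $s_1,s_2,\dots\in R$ with pairwise orthogonal ranges and $\sum_i s_is_i^{*}=1_R$ strictly, so that any projection $p\in R$ has an ``infinite repeat'' $p^{(\infty)}=\sum_i s_ips_i^{*}\in R$ with $p\oplus p^{(\infty)}\sim p^{(\infty)}$. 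The usual Eilenberg swindle built from these isometries gives $K_0(R)=K_1(R)=0$, and proper infiniteness gives $K_1(R)\cong U(R)/U_0(R)$, so in particular $U(R)=U_0(R)$; the same remarks apply verbatim to $M_n(R)$ since $M_n(R)\cong R$.

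Granting this, (1)--(4) and (7), (8) are immediate. For the cancellation statements, $K_0(R)=0$ forces $[p]=[q]$ for all projections $p,q$, and by Zhang's description of projections in the multiplier algebra of a stable $\sigma$‑unital \CA\ (\cite{Zh}) every projection in $R\setminus (A\otimes {\cal K})$ is properly infinite, with Murray--von Neumann equivalence among ``co‑full'' properly infinite projections detected by $K_0(R)$; adding a single copy of $1_R$ converts any projection (or the defect of any partial isometry) into such a co‑full, properly infinite one, which yields $K_0$‑ and $K_1$‑$r$‑cancellation together with $K_0$‑ and $K_1$‑stable rank $1$, with $r(n)\equiv 1$ in all four cases. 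Since $K_0(R)=K_1(R)=0$, every $K$‑theory class is (uniquely) divisible by every $n$, so the $K_0$‑ and $K_1$‑divisible ranks are $T(n,m)=1$ as well.

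For (5), (6), (9) the input is the exponential‑length and exponential‑rank estimates for properly infinite unital \CA s with vanishing $K_1$ (equivalently, for multiplier algebras of stable $\sigma$‑unital \CA s): every unitary of $R$ lies in $U_0(R)$ and is a product of at most four exponentials $e^{ih_1}e^{ih_2}e^{ih_3}e^{ih_4}$ of total exponential length at most $3\pi$; because $M_n(R)\cong R$, these bounds are automatically stable, giving stable exponential length $b=3\pi$, stable exponential rank $4$, and exponential‑length divisible rank $E(r,n)=3\pi$. The one point that is not purely formal is matching the numerical constants $3\pi$ and $4$ to the exact normalisation of Definition~2.1 of \cite{GL}: here one must invoke (or reprove, along the standard lines for purely infinite and for stable algebras) the precise exponential‑rank/length computation for $M(A\otimes {\cal K})$, and this is the only place in the argument where a genuine estimate, rather than a formal consequence of $M_n(R)\cong R$, proper infiniteness, and $K_*(R)=0$, is required.
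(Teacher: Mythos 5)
Your proposal is correct and follows essentially the same route as the paper: both rest on the stability‑absorption isomorphism $M_n(M(A\otimes {\cal K}))\cong M(A\otimes {\cal K})$, the vanishing of $K_*(M(A\otimes {\cal K}))$, the absorption $p\oplus 1\sim 1$ of projections, and an exponential‑length/rank bound for $M(A\otimes {\cal K})$. The only missing ingredient is the specific references: the paper cites \cite{CH} for $K_i(M(A\otimes {\cal K}))=0$ (rather than redoing the Eilenberg swindle) and Theorem~1.1 of \cite{ZhJOT92} for ${\rm cel}(M(A\otimes {\cal K}))\le 3\pi$ and ${\rm cer}(M(A\otimes {\cal K}))\le 4$, which is exactly the nonformal input you correctly flag as still needing to be supplied.
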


\begin{proof}
First, by \cite{CH}, $K_i(M(A\otimes {\cal K}))=\{0\},$ $i=0,1.$
We also have, for any $k\in \N,$ $M_k(M(A\otimes {\cal K}))\cong M(A\otimes {\cal K}).$ 
For any projection $p\in  M(A\otimes {\cal K}),$  we have $p\oplus 1_{M(A\otimes {\cal K})}\sim 
1_{M(A\otimes {\cal K})}.$
This immediately implies (1), (4) and (7). 

It follows from Theorem 1.1 of \cite{ZhJOT92} that ${\rm cel}(M(A\otimes {\cal K}))\le 3\pi$
and ${\rm cer}(M(A\otimes {\cal K}))\le 4.$
Therefore, (2), (3), (5), (6), (8) and (9) hold. 
\end{proof}

\begin{df}\label{DR}
Let $A$ be a unital \CA. 
Let $r, b: \N\to \N$ and $T: \N^2\to \N$ be maps, and  $R,\,s>0.$
We say $A$ satisfies condition $(R, r, b, T, s)$ if 
$A$ has $K_i$-$r$-cancellation,
$i=0,1,$ 
$K_1$-stable rank $s,$ $K_0$-divisible rank $T,$  ${\rm cer}(M_m(A))\le R$ for all $m\in \N,$ 
and ${\rm cel}(M_m(A))\le b(m)$ for all $m\in \N.$
\end{df}

If $A$ is a  unital purely infinite simple \CA, then $A$ satisfies condition $(R, r, b,T, s)$
for $R=2,$ $s=1,$ any $r$ and $T$ and $b=2\pi.$  If $A$ has stable rank one and real rank zero, 
then $A$ satisfies condition $(R, r, b, T, s)$ for $R=2,$ $s=1,$ any $r,$  $T, $ and $b=2\pi.$


We will apply the following theorem which is a variation of 
Theorem 3.1 of \cite{GL}.

\begin{thm}\label{Tgl99}
Fix $n\in \N.$
Let $r, b: \N\to \N, T : \N^2\to \N$  be maps,  and $R>0,\,
s > 0.$ 

For any $\ep > 0$  
there exist a positive number 
$\dt> 0$ 
and an integer $l > 0$ satisfying the following: 
Suppose that $A$ is a 
unital \CA\, such that:

1. $A$ satisfies   condition $(R, r, b,T, s)$;

2. $K_i(A\otimes C(Y))=0,$ $i=0,1,$ for any compact metric space $Y.$ 

Let $X$ be a compact subset of $\I^n,$ 
and let $L_1, L_2: C(X)\to A$  be
 unital  c.p.c. maps such that
 \beq\label{Lgl99-1}
 \|L_k(e_ie_j|_X)-L_k(e_i|_X)L_k(e_j|_X)\|<\dt,\,\, \, 1\le i,j\le n, \,\,\, k=1,2.
 \eneq
 %
Then, there are a unitary $u\in  M_{l+1}(A)$ (for some $l\in \N$) and a homomorphism $\sigma : C(X) \to  M_l(A)$ with
finite dimensional image, such that
\beq
\|u^*{\rm diag}(L_1(e_j|_X), \sigma(e_j|_X))u - {\rm diag}(L_2(e_j|_X), \sigma(e_j|_X))\|<\ep,\,\,\,
1\le j\le n.
\eneq
\end{thm}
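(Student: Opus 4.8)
The plan is to deduce this from Theorem 3.1 of \cite{GL} by a standard approximation-and-compactness argument, using the fact that $C(X)$ is a quotient of $C(\I^n)$, which is a finite CW complex (indeed contractible). First I would reduce the problem from the c.p.c. maps $L_1, L_2 : C(X) \to A$ to genuine homomorphisms. For this, fix $\ep > 0$ and a finite generating set ${\cal G} = \{1, e_1|_X, \dots, e_n|_X\}$ of $C(X)$. By a semiprojectivity/perturbation argument (in the spirit of Lemma \ref{Lpurb} together with the observation that $C(X)$ is finitely presented up to the relevant tolerance, since $X \subset \I^n$), there is $\dt_0 > 0$ such that whenever $L_k$ satisfies \eqref{Lgl99-1} with $\dt_0$ in place of $\dt$, one can find a compact set $X' \supset X$ with $X \subset X' \subset X_{\dt_0}$, having property (F), together with \emph{genuine} unital homomorphisms $\td L_k : C(X') \to A$ that approximately agree (within $\ep/4$ on $\{e_j|_{X'}\}$) with $L_k$ after restriction. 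Concretely, one passes to a sequential limit $B = \prod_m A_m$ over a would-be counterexample sequence, obtains a commuting $n$-tuple of self-adjoint contractions in $B/\bigoplus_m A_m$ generating a copy of $C(X')$ for some $X'$, lifts by Choi--Effros, and uses Theorem \ref{Tgl99}'s hypothesis on the $A_m$ to derive a contradiction --- this is exactly the compactness scheme used repeatedly above (Propositions \ref{Pappsp}, \ref{Pspuniq}, \ref{Pesspsp}).

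Once reduced to homomorphisms $\td L_1, \td L_2 : C(X') \to A$ from the commutative $C^*$-algebra of a finite CW complex, I would apply Theorem 3.1 of \cite{GL} directly. That theorem asserts: given the structural constants $(R, r, b, T, s)$ and $\ep > 0$, there exist $\dt > 0$ and $l \in \N$ such that for any unital $A$ satisfying condition $(R, r, b, T, s)$ with vanishing $K$-theory for all $C(Y)$-coefficients, and any two unital homomorphisms $\phi_1, \phi_2 : C(X') \to A$ agreeing in the appropriate $KK$-sense (which is automatic here, since $KK(C(X'), A) = 0$ follows from $K_i(A \otimes C(Y)) = 0$ and the universal coefficient theorem --- $C(X')$ is in the bootstrap class), there is a homomorphism $\sigma : C(X') \to M_l(A)$ with finite-dimensional image and a unitary $u \in M_{l+1}(A)$ with
\[
\| u^* \,\diag(\phi_1(g), \sigma(g))\, u - \diag(\phi_2(g), \sigma(g)) \| < \ep/2
\]
for $g$ in a prescribed finite subset of $C(X')$, in particular for $g = e_j|_{X'}$. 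Pulling $\sigma$ back along the restriction $C(X') \to C(X)$ (using $X \subset X'$) gives the required $\sigma$ on $C(X)$, and combining with the $\ep/4$-approximations from the first step yields the stated inequality, after shrinking $\dt$ and enlarging $l$ to absorb all the constants.

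The main obstacle I anticipate is twofold. First, checking that the hypothesis ``$K_i(A \otimes C(Y)) = 0$ for all compact metric $Y$'' of the present statement is exactly what is needed to force $KK(C(X'), A) = 0$ and hence eliminate any $KK$-obstruction in the cited $\mathrm{GL}$ theorem --- one must be a little careful that $C(X')$ lies in the bootstrap class (true, since $X'$ is a finite CW complex or a $1/k$-brick combination, which has property (F) with finitely generated $K$-theory) so that the UCT applies and the vanishing of $\mathrm{Ext}$ and $\Hom$ terms is genuine. Second, the passage from c.p.c.\ maps to homomorphisms must be uniform: the $\dt_0$ produced must not depend on $A$ beyond the structural constants, which is why the sequential-limit argument (rather than a naive perturbation estimate) is essential --- this is the one place where I expect to have to be careful, though the machinery is already in place in Section 2. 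Everything else --- the finite-dimensional image of $\sigma$, the matrix amplifications, the bookkeeping of $\ep$'s --- is routine.
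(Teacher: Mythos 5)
There is a genuine gap, and it is the very first step. You propose to reduce the c.p.c. maps $L_1,L_2$ to genuine unital homomorphisms $\td L_k:C(X')\to A$ that are uniformly close to them, invoking ``a semiprojectivity/perturbation argument'' backed up by a sequential-limit compactness scheme. But $C(X)$ for a general compact $X\subset\I^n$ with $n\ge 2$ is \emph{not} semiprojective (nor weakly semiprojective) --- indeed, the failure of this lifting property for $C(\I^2)$ is precisely the structural obstruction underlying the Mumford problem and this whole paper. The compactness scheme you cite (as in Propositions \ref{Pappsp}, \ref{Pspuniq}, \ref{Pesspsp}) does not deliver what you need either: passing to $\prod_m A_m/\bigoplus_m A_m$ and invoking Choi--Effros produces a sequence of c.p.c.\ maps, not homomorphisms. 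There is no way to conclude from the homomorphism on the corona algebra that the individual coordinates can be taken multiplicative. The statement you are implicitly using in Step 1 --- that a sufficiently multiplicative unital c.p.c.\ map from $C(X)$ into $A$ with the stated $K$-theory hypotheses is uniformly close to a genuine homomorphism --- is essentially equivalent to the theorem you are trying to prove (one direction follows from it by absorbing $\sigma$), so the proposed reduction is circular.

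The paper's actual proof dispenses with this reduction entirely. Theorem 3.1 of \cite{GL} is already formulated for \emph{almost-multiplicative c.p.c. maps}, not for genuine homomorphisms, and it requires a $K$-theoretic compatibility condition $[L_1]|_{\cal P}=[L_2]|_{\cal P}$ on a prescribed finite subset ${\cal P}$ of $K$-theory. The hypothesis $K_i(A\otimes C(Y))=0$ in the present statement is there precisely so that this condition is vacuous: any partially defined $[L]|_{\cal P}$ vanishes automatically. With that observed, one applies \cite{GL}, Theorem 3.1 directly to $L_1,L_2$, obtaining $\dt_0$ and a finite set ${\cal G}_0\subset C(X)$ on which $\dt_0$-multiplicativity suffices; the only remaining work is a Lemma \ref{Lpurb}-type perturbation to replace $\dt_0$-multiplicativity on ${\cal G}_0$ by $\dt$-multiplicativity on the generators $\{e_i|_X\}$ as in \eqref{Lgl99-1}. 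Your Step 2 also misreads \cite{GL}, Theorem 3.1 as a statement about homomorphisms; once you realize it covers almost-multiplicative c.p.c.\ maps, Step 1 becomes unnecessary and the argument collapses to the paper's.
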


\begin{proof}
Let us first fix a compact subset $X.$ 
Let $\ep>0$ and ${\cal F}=\{e_j|_X: 0\le j\le n\}.$
Since we assume that 
$K_i(A\otimes C(Y))=0$ for any compact metric space $Y,$ $i=0,1,$ 
for any finite subset ${\cal P}$ as in Theorem 3.1 of \cite{GL}, we have that 
$[L]|_{\cal P}=0$ for any sufficiently multiplicative c.p.c. maps
$L: C\to A.$ 
Then, by Proposition \ref{Pkkr} and applying  Theorem 3.1 of \cite{GL},
we obtain  $\dt_0$ and a finite subset ${\cal G}_0\subset C(X)$ 
such that, if 
\beq
\|L_i(fg)-L_i(f)L_i(g)\|<\dt_0\rforal f,g\in {\cal G}_0,\,\,\, i=1,2,
\eneq
there exists an integer $l>0,$  a unital \hm\, $\sigma: C(X)\to  M_l(A)$ with finite dimensional range and 
a unitary $u\in M_l(A)$ such that
\beq
\|u^*{\rm diag}(L_1(e_j|_X), \sigma(e_j|_X))u - {\rm diag}(L_2(e_j|_X), \sigma(e_j|_X))\|<\ep/2,\,\,\,
1\le j\le n
\eneq
(Note  as mentioned above,  that  $[L_i]|_{\cal P}=0,$ whenever it makes sense, since $K_i(A\otimes C(Y))=0$
for any compact metric space $Y,$ $i=0,
1.$)

However, since $\{e_j: 0\le j\le n\}$ generates $C(X)$ as \CA,
one obtains $\dt(n, \ep)>0$ such that  when   \eqref{Lgl99-1} holds,  then
\beq
\|L_i(fg)-L_i(f)L_i(g)\|<\dt_0\rforal f,g\in {\cal G}_0,\,\,\, i=1,2,
\eneq
So Theorem 3.1 of \cite{GL} applies and 
the theorem would follow if we fix $X$ first.   

To find a common $\dt$ and $l,$ indepedent of $X\subset \I^n,$ we note that there are finitely many compact subsets 
$X_1, X_2,...,X_m$ of $\I^n$  such that, for any non-empty compact subset $X,$
there is $s\in \{1,2,...,m\}$ such that 
\beq
d_H(X, X_s)<\ep/6
\eneq
(see \ref{DHd}).
Let $\Omega_s=\{x\in \I^n: {\rm dist}(x, X_s)\le  \ep/6\},$ $s=1,2,...,m.$  Then, for any non-empty 
compact subset $X$ of $\I^n,$  there is $s\in \{1,2,...,m\}$ 
such that
\beq
d_H(X, \Omega_s)\le \ep/3 \andeqn X\subset \Omega_s.
\eneq
When $\ep>0$ is given,
by the first part of the proof, we obtain $\dt_s>0$ and $l_s\in \N$ 
such that the conclusion of the first part holds for   $\Omega_s,$ $s=1,2,..., m.$
Choose $\dt=\min\{\dt_1, \dt_2,...,\dt_m\}$ and $l=\max \{l_1,l_2,...,l_m\}.$

Now suppose $X\subset \I^n$ and $L_1, L_2: C(X)\to A$ 
are unital c.p.c. maps satisfying  the assumption \eqref{Lgl99-1}.
Suppose that 
\beq
d_H(X,\Omega_s)\le \ep/3\andeqn X\subset \Omega_s.
\eneq
Define $\tilde L_k: C(\Omega_s)\to A$ by 
$\tilde L_k(e_j|_{\Omega_s})=L_k(e_j|_X)$ (this makes sense as $X\subset \Omega_s$),
$j=1,2,...,n,$ $k=1,2.$  Therefore
\beq
\|\tilde L_k(e_ie_j|_{\Omega_s})-\tilde L_k(e_i|_{\Omega_s})\tilde L_k(e_j|_{\Omega_s})\|<\dt,\,\,i,j\in \{1,2,...,n\}
\eneq
and $k=1,2.$  Hence, with the choice of $\dt,$ 
we obtain \hm\, $\sigma': C(\Omega_s)\to M_l(A)$ with finite dimensional range
and a unitary $u\in M_{l+1}(A)$  such that, for $j=1,2,...,n,$ 
\beq\label{Tgl99-10}
\|u^*{\rm diag}(L_1(e_j|_{\Omega_s}), \sigma'(e_j|_{\Omega_s}))u -
 {\rm diag}(L_2(e_j|_{\Omega_s}), \sigma'(e_j|_{\Omega_s}))\|<\ep/2
\eneq
We may write $\sigma'(e_j|_{\Omega_s})=\sum_{k=1}^K e_j(\xi_k) p_k,$
where $\xi_k\in \Omega_s$ ($k=1,2,...,K$) and 
$\{p_1,p_2,...,p_K\}$ is a set of mutually orthogonal projections in $M_l(A).$
Since  $d_H(X, \Omega_s)\le \ep/4,$
for each $\xi_k,$ we may choose $x(\xi_k)\in X$ such that
\beq
\|\xi_k-x(\xi_k)\|_2={\rm dist}(\xi_k, x(\xi_k))\le \ep/3,\,\,1\le k\le K
\eneq
Define  $\sigma: C(X)\to M_{l+1}(A)$ by 
\beq
\sigma(f)=\sum_{k=1}^K f(x(\xi_k))p_k \rforal f\in C(X).
\eneq
Note that 
\beq
\|\sigma(e_j|_X)-\sigma'(e_j|_{\Omega_s})\|\le \ep/3,\,\,\, 1\le j\le n.
\eneq
Hence, by \eqref{Tgl99-10}, for $1\le j\le n,$
\beq
\|u^*{\rm diag}(L_1(e_j|_{X}), \sigma(e_j|_{X}))u -
 {\rm diag}(L_2(e_j|_{X}), \sigma(e_j|_{X}))\|\le \ep/2 +\ep/3<\ep.
\eneq
%
%
\end{proof}

\begin{thm}\label{TTmul}
Let $n\in \N$ and  $\ep>0.$   Let  $R\in \R_+\setminus \{0\},$ $r, b: \N\to \N$ and $T: \N^2\to \N$ be maps and $s>0.$ 
There exists $\dt(n, \ep)>0$ satisfying the following:

Suppose that $A$ is a unital \CA\,  such that:

1.   $A$ satisfies condition $(R, r, b, T, s);$

2. $K_i(A\otimes C(Z))=0$ for any compact metric space $Z$ ($i=0,1$);

3. $J\subset A$ is an essential ideal which has real rank zero  such that $A/J$ is purely infinite simple;

4. for any $l\in \N$  and any non-zero projection $e\in M_l(A)\setminus M_l(J),$  we have 
$e\sim 1.$ 

Suppose further that 
$T_1, T_2,...,T_n\in A_{s.a.}$ 
with $\|T_i\|\le 1$  
($1\le i\le n$) 
satisfy :
\beq\label{TTmul-0}
\|T_iT_j-T_jT_i\|<\dt,\,\,\, 1\le i,j\le n\tand
d_H(X, Y)<\ep/8
\eneq
(recall Definition \ref{DHd}),
where $X=s{\rm Sp}^{\ep/8}(T_1, T_2,...,T_n),$ 
$Y=s{\rm Sp}^{\ep/8}((\pi(T_1), \pi(T_2),...,\pi(T_n)),$  and 
where $\pi: A\to A/J$ is the quotient map.

Then there are $S_1, S_2,...,S_n\in A_{s.a.}$
such that
\beq
S_iS_j=S_jS_i,\,\,\, 1\le i,j\le n\andeqn \|T_j-S_j\|<\ep,\,\,\, 1\le j\le n.
\eneq
\end{thm}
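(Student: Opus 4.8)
The plan is to feed the near‑spectrum machinery of Section~2 into the decomposition Lemma~\ref{Ldig} and the uniqueness Theorem~\ref{Tgl99}, and then to ``fold in'' the stabilising homomorphism produced by the latter, using the hypothesis that every non‑zero projection of $A\setminus J$ is Murray--von Neumann equivalent to $1_A$ (which, applied to a projection $p_0$ with $p_0,1-p_0\notin J$, also forces $1_A$ to be properly infinite, so $M_r(A)\cong A$ for all $r$).

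\textbf{Step 1: locating the spectra.} Fix $\ep$ and choose $\dt$ small enough that Propositions~\ref{Pappsp} and \ref{Pspuniq}, Theorem~\ref{s2-T1}, Lemma~\ref{Ldig} and Theorem~\ref{Tgl99} apply with suitably small auxiliary constants. By Proposition~\ref{Pappsp}(2) there is a non‑empty $\dt'$‑near‑spectrum $X_0=nSp^{\dt'}((T_1,\dots,T_n))$, realised by a unital c.p.c.\ map $\Phi\colon C(X_0)\to A$; since $A/J$ is purely infinite simple, Theorem~\ref{s2-T1} supplies a $1/k$‑brick combination $Y_0$ and a unital monomorphism $\phi\colon C(Y_0)\to A/J$ with $\|\phi(e_j|_{Y_0})-\pi(T_j)\|$ small, so $Y_0$ is a $\dt'$‑spectrum of $(\pi(T_1),\dots,\pi(T_n))$. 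Comparing $X_0,Y_0$ with the synthetic spectra $X=s{\rm Sp}^{\ep/8}(T_j)$, $Y=s{\rm Sp}^{\ep/8}(\pi(T_j))$ by Proposition~\ref{Pspuniq} and invoking the hypothesis $d_H(X,Y)<\ep/8$, one gets $d_H(X_0,Y_0)<\eta$ with $\eta=O(\ep)$ and implied constant we control; in particular every point of $X_0$ lies within $\eta$ of a point of $Y_0$.

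\textbf{Step 2: splitting off a commuting part.} Apply Lemma~\ref{Ldig} (with parameter $\ep/8$) to $X_0$ and $Y_0$: it yields mutually orthogonal non‑zero projections $p_1,\dots,p_m\in A\setminus J$, a $\dt'$‑dense subset $\{\lambda_1,\dots,\lambda_m\}\subset Y_0$, and a c.p.c.\ map $L\colon C(X_0)\to(1-p)A(1-p)$ (where $p=\sum_k p_k$, $1-p\notin J$) which is almost multiplicative on $\{e_1|_{X_0},\dots,e_n|_{X_0}\}$ and satisfies $\|\sum_k e_j(\lambda_k)p_k+L(e_j|_{X_0})-T_j\|<\ep/8$. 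By hypothesis each $p_k\sim 1_A$, so every $p_k$ absorbs finitely many subprojections of $1_A$. After perturbing the $\lambda_k$ into $X_0$ (cost $<\eta$, using $d_H(X_0,Y_0)<\eta$) or passing to the common compact domain $X_0\cup Y_0$, put $M\colon C(X_0)\to A$, $M(f)=\sum_k f(\lambda_k)p_k+L(f)$. Because $p_kL(g)=L(g)p_k=0$, the cross terms vanish and $M$ is almost multiplicative on the coordinate functions, with $\|M(e_j|_{X_0})-T_j\|<\ep/4$, while $\{\lambda_k\}$ remains $O(\eta)$‑dense relative to $X_0$.

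\textbf{Step 3: absorbing the stabiliser and conclusion.} Fix a unital finite‑dimensional‑image homomorphism $\rho\colon C(X_0)\to A$, $\rho(f)=\sum_i f(\mu_i)\tilde p_i$, where $\{\mu_i\}$ is a fine net of $X_0$ refining $\{\lambda_k\}$ and each $\tilde p_i\sim 1_A$ (possible as $1_A$ is properly infinite). Apply Theorem~\ref{Tgl99} to $A$ with $L_1=M$, $L_2=\rho$ --- its $KK$‑hypotheses are automatic since $K_i(A\otimes C(Y))=0$ for all $Y$ --- to get a finite‑dimensional‑image homomorphism $\sigma\colon C(X_0)\to M_l(A)$ and a unitary $U\in M_{l+1}(A)$ with $\|U^*{\rm diag}(M(e_j|_{X_0}),\sigma(e_j|_{X_0}))U-{\rm diag}(\rho(e_j|_{X_0}),\sigma(e_j|_{X_0}))\|$ as small as desired. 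Now relocate each atom $f(y_i)g_i$ of $\sigma$ from $y_i\in X_0$ to the nearest $\lambda_{k(i)}$ (changing $\sigma$ by $<\eta$ on coordinate functions), merge the relocated atom into $p_{k(i)}$ using $p_k\oplus(\text{finite})\sim p_k$, and read off a partial isometry $W$ implementing $M_{l+1}(A)\cong A$ with $\|W\,{\rm diag}(M(e_j|_{X_0}),\sigma(e_j|_{X_0}))\,W^*-M(e_j|_{X_0})\|<\eta$; the same device with the $\tilde p_i$ gives $W'$ with $\|W'\,{\rm diag}(\rho(e_j|_{X_0}),\sigma(e_j|_{X_0}))\,W'^*-\rho(e_j|_{X_0})\|<\eta$. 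Chaining $W$, $U$, $W'$ (two isomorphisms $M_{l+1}(A)\cong A$ differ by a unitary of $A$) yields a unitary $z\in A$ with $\|M(e_j|_{X_0})-z\,\rho(e_j|_{X_0})\,z^*\|<2\eta+(\text{Theorem~\ref{Tgl99} error})$. Setting $S_j:=z\,\rho(e_j|_{X_0})\,z^*$ gives self‑adjoint, mutually commuting operators with $\|S_j-T_j\|\le\|S_j-M(e_j|_{X_0})\|+\|M(e_j|_{X_0})-T_j\|<\ep$, once the constants are arranged so that $2\eta+\ep/4+(\text{small})<\ep$.

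\textbf{The main obstacle.} The crux is Step~3: upgrading ``$M$ is close to a homomorphism \emph{after stabilising by} $\sigma$'' to ``$M$ is close to a homomorphism in $A$ itself''. This is where both structural hypotheses are indispensable --- Lemma~\ref{Ldig} produces \emph{properly infinite} atoms $p_k\sim 1_A$ that can swallow $\sigma$, and $d_H(X,Y)<\ep/8$ is precisely what keeps the points carrying those atoms dense enough in $X_0$ for the swallowing to stay within the error budget. Without the Hausdorff hypothesis the atoms of $\sigma$ could be supported over the part of the full synthetic spectrum coming from the ``compact'' ideal direction, far from any $\lambda_k$, and no such absorption is available --- consistent with the failure, for $n\ge 3$, of the unadorned statement discussed in Section~6.
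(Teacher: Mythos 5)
Your strategy matches the paper's: peel off a commuting, properly infinite atomic part labelled by the $\lambda_k$'s via Lemma~\ref{Ldig}, compare the residual c.p.c.\ map stably with a homomorphism via Theorem~\ref{Tgl99}, and then absorb the stabilising homomorphism $\sigma$ back into the $\lambda_k$-atoms --- the last step being exactly where the Hausdorff hypothesis earns its keep. Your Step~3, applying Theorem~\ref{Tgl99} to $M$ and a user-chosen finite-dimensional $\rho$ in $A$ and swallowing $\sigma$ on both sides of the resulting approximate intertwining, is a sound cosmetic variant of the paper's route, which instead applies Theorem~\ref{Tgl99} to $L$ and a single point-evaluation $\Phi_0$ inside the corner $(1-p)A(1-p)\cong A$ and swallows $\sigma$ into the atomic part only once. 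Both rely on the same absorption device ($p_k\sim 1_A$, $K_0(A)=0$, hence $p_k\oplus q\sim p_k$ for any projection $q\in M_l(A)$) and on the same density of $\{\lambda_k\}$ relative to $X_0$ supplied by the hypothesis $d_H(X,Y)<\ep/8$.

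There is, however, a genuine gap at the very start. The threshold $\delta$ and the integer $l$ produced by Theorem~\ref{Tgl99} depend on the compact domain $C(X)$, not just on $n$ and $\ep$: the finite subset ${\cal G}_0$ and constant $\delta_0$ that its proof imports from Theorem~3.1 of \cite{GL} are $X$-dependent, and Lemma~\ref{Lpurb} then yields a tolerance depending on ${\cal G}_0$. Your near-spectrum $X_0$ is produced by Proposition~\ref{Pappsp} only \emph{after} the operators $T_j$ are given, so it cannot enter the a priori choice of $\delta$; the sentence ``choose $\delta$ small enough that \dots\ Theorem~\ref{Tgl99} applies with suitably small auxiliary constants'' hides a circularity. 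The paper resolves this at the outset of its proof of Theorem~\ref{TTmul} with a compactness argument in the metric space $(F(\I^n),d_H)$: up to Hausdorff distance $\ep/2^{8}$ there are only finitely many compact subsets of $\I^n$ (the subsets of a fixed $\ep/2^{10}$-net $D^{\ep/2^9}$), and one takes $\delta$ to be the minimum of the Theorem~\ref{Tgl99} thresholds over this finite family of reference domains $\Omega_1,\dots,\Omega_N$. Without this step, or an equivalent uniformity argument, your $\delta$ is not well defined before the operators are revealed, and the proof does not get off the ground.
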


\begin{proof}
Fix $n\in \N.$
Let $\ep>0.$ 
%
%
%
%
We will apply Theorem \ref{Tgl99}. 
 Let  
 $\dt_1>0$  (in place of $\dt$)
 be 
     given  by  Theorem \ref{Tgl99}  for $\ep/16.$

Put $\ep_0=\min\{\ep/128,\dt_1/4\}.$
Let  $\dt_2'>0$ (in place of $\dt$)  be  given by Lemma \ref{Ldig} for $\ep_0$
(in place of $\ep$).  
Choose $\dt_2=\min \{\dt_2', \ep_0/2\}.$
Put $\ep_1=\min\{\ep_0/2, \dt_2/4\}.$


Let $\dt_3>0$ be given by Proposition \ref{Pspuniq}  for $\ep_1$
(in place of $\eta$). 
Choose $\ep_2=\{\ep_1/2, \dt_3/2\}.$ 
Let $\dt_4>0$ be given by Proposition \ref{Pappsp} for $\ep_2$ (in place of $\eta$)
and let $\dt_5>0$ be given by Theorem \ref{s2-T1} for $\ep_2.$ 

Let $\dt:=\min\{\dt_1/2, \dt_2/2, \dt_3/2, \dt_4, \ep_2\}.$ 

Now suppose that $T_1, T_2,...,T_n\in A_{s.a.}$ with $\|T_i\|\le 1$ ($1\le i\le n$) such that
\beq
\|T_iT_j-T_jT_i\|<\dt\rforal 1\le j\le n\andeqn d_H(X, Y)<\ep/8,
\eneq

By the choice of $\dt$ (and $\dt_4,$ $\dt_5$) and, applying Proposition \ref{Pappsp}, we obtain 
non-empty sets 
 $X=s{\rm Sp}^{\ep/8}((T_1, T_2, ...,T_n))$ 
and $Y=s{\rm Sp}^{\ep/8}((\pi(T_1), \pi(T_2),...,\pi(T_n))$  (see also \eqref{pSp<}),
and the $n$-tuple $(T_1, T_2,..., T_n)$
has  an
$\ep_2$-near-spectrum $X_1$ and, by Theorem \ref{s2-T1}, 
$(\pi(T_1), \pi(T_2),...,\pi(T_n))$ has an  $\ep_2$-spectrum $Y_1.$
Moreover (by the choice of $\dt_3$), since $\ep_2=\min\{\ep_1/2, \dt_3/2\},$
we have  (by applying Proposition \ref{Pspuniq}), 
\beq\label{TTmul-9}
Y_1\subset \overline{(X_1)_{\ep_1}},\,\,\,  X_1\subset X\subset \overline{(X_1)_{2\ep_1}},
 \andeqn Y_1\subset Y\subset \overline{(Y_1)_{2\ep_1}}.
\eneq

By the choice of $\dt_2$ (and $\ep_2<\dt_2'$),  applying Lemma \ref{Ldig}, we obtain mutually orthogonal nonzero projections 
$p_1,p_2,...,p_m\in A\setminus J,$  a $\dt_2$-dense subset 
$\{\lambda_1, \lambda_2,...,\lambda_m\}\subset Y_1,$ 
a c.p.c. map  $L: C(X_1)\to (1-p)A(1-p)$ such that (with $p=\sum_{i=1}^m p_i$)
\beq\label{TTmul-10}
&&\|\sum_{i=1}^m e_j(\lambda_i)p_i+L(e_j|_{X_1})-T_j\|<\ep_0<\dt_1\andeqn\\\label{TTmul-15}
&&\|L(e_ie_j|_X)-L(e_i|_X)L(e_j|_X)\|<\ep_0,
\eneq
$j=1,2,...,n,$
and $1-p\not\in J.$  
Put 
$$
\Phi_1(f)=\sum_{i=1}^m f(\lambda_i)p_i+L(f|_{X_1})\rforal f\in C(\I^n).
$$
%
Note that, since $(1-p)\sim 1,$  $(1-p)A(1-p)\cong A.$ 
Choose $\xi_0\in X_1$ and 
define $\Phi_0: C(X_1)\to (1-p)A(1-p)$ by $\Phi_0(f)=f(\xi_0)(1-p).$

By the choice of $\dt_1$ and \eqref{TTmul-15}, as $(1-p)A(1-p)\cong A,$
and applying Theorem \ref{Tgl99},
we obtain a unital \hm\, $\sigma: C(X_1)\to M_l((1-p)A(1-p))$ 
(for some integer $l\in \N$) with finite dimensional range and 
a unitary $u_1\in M_{l+1}((1-p)A(1-p))$
such that
\beq\label{TTmul-16}
\|u_1^*\diag(\Phi_0(e_j|_{X_1}), \sigma(e_j|_{X_1}))u_1-\diag(L(e_j|_{X_1}), \sigma(e_j|_{X_1}))\|<\ep/16,
\,\,\, 1\le j\le n.
\eneq
Since $\sigma$ has a finite dimensional range, there are mutually orthogonal projections 
$q_1,q_2,...,q_L\in M_{l}((1-p)A(1-p))$ (for some integer $L\ge 1$) such that 
\beq
\sigma(e_j|_{X_1})= \sum_{k=1}^L e_j(\xi_k)q_k,\,\,\, 1\le j\le n,
\eneq
where $\xi_k\in X_1,$ $k=1,2,...,L.$ Put $q=\sum_{i=1}^L q_i.$ 
Note that, without loss of generality, by 
replacing $l$ by $l+1$ and adding a projection $q_i''\in M_l((1-p)A(1-p))\setminus M_l((1-p)J(1-p))$
to each $q_i,$  if neseccarily, we may assume that $q_i\not\in M_l((1-p)A(1-p))\setminus M_l((1-p)J(1-p)),$
$i=1,2,...,L.$

Define $\Phi_2: C(\I^n)\to M_{l+1}((1-p)A(1-p))$ by 
\beq\label{TTmul-16+}
\Phi_2(f)=u_1^*\diag(\Phi_0(f|_{X_1}), \sigma(f|_{X_1}))u_1\rforal f\in C(\I^n).
\eneq

Let $\{e_{i,j}\}_{1\le i,j\le l+1}$ be a chosen matrix unit for $M_{l+1}(A).$
In what follows, we identify $1-p$ with $(1-p)\otimes e_{1,1}$ and 
$M_l(A)$ above with $EM_{l+1}(A)E,$ where $E=\sum_{i=2}^{l+1}1\otimes e_{i,i}.$
In particular, we view $q\in  EM_{l+1}(A)E.$

Recall that $\{\lambda_1, \lambda_2,...,\lambda_m\}$ is $\dt_2$-dense in $Y_1.$
By the assumption that $d_H(X,Y)<\ep/8$ and \eqref{TTmul-9},  we conclude that
$\{\lambda_1, \lambda_2,...,\lambda_m\}$  is $\dt_2+2\ep_1+\ep/8$-dense in $X_1.$
Note  that 
\beq
\dt_2+2\ep_1+\ep/8<\ep/256+\ep/128+\ep/8<35\ep/256.
\eneq

Hence  $\{\lambda_1, \lambda_2,...,\lambda_m\}$ is $35\ep/256$-dense
in $X_1.$
Let 
\beq
F_1&=&
\{\xi_1,\xi_2,...,\xi_L\}\cap B(\lambda_1,35\ep/256),\\
%
F_2&=&(\{\xi_1, \xi_2,...,\xi_L\}\setminus F_1)\cap B(\lambda_2, 35\ep/256).
\eneq
By induction, since $\{\lambda_1, \lambda_2,...,\lambda_m\}$ is $35\ep/256$-dense
in $X_1,$
 we obtain mutually disjoint subsets 
$F_1, F_2,..., F_{m'}$ such that $\sqcup_{i=1}^{m'} F_i=\{\xi_1, \xi_2,...,\xi_L\},$
and $F_j\subset B(\lambda_j, 35\ep/256),$ $j=1,2,...,m',$
where $m'\le m.$

Put $q_k'=\sum_{\xi_j\in S_k}q_j,$ $k=1,2,...,m'.$
Define 
\beq
\sigma_1(e_j|_X)=\sum_{k=1}^{m'} e_j(\lambda_k)q_k'.
\eneq
Then 
\beq\label{TTmul-17+}
\|\sigma_1(e_j|_X)-\sigma(e_j|_X)\|<35\ep/256,\,\,\, 1\le j\le n.
\eneq
Recall that, $e\sim 1$ for any nonzero projection $e\in M_{l+1}(A)\setminus M_{l+1}(J).$ 
Thus, in  $M_{l+1}(A),$ viewing $q\in EM_{l+1}(A)E,$ we have a partial isometry 
$w$ such that $qwp=w,$ 
\beq\nonumber
w^*q_k'w=p_k,\,\,\, k=1,2,...,m'\andeqn w_1^*\sigma_1(e_j|_X)w_1=\sum_{k=1}^{m'} e_j(\lambda_k)p_k,\,\,\, 1\le j\le n.
\eneq
Then put  $w_1=(1-p)+ w.$  
Then 
\beq
w_1^*w_1=(1-p) +\sum_{k=1}^{m'}p_k\,\,\,(\le (1-p)+p\le 1\otimes e_{1,1}).
\eneq
It follows that
\beq\label{TTmul-30}
\Phi_1(e_j|_X)&=& w_1^*\diag(L(e_j|_X), \sigma_1(e_j|_X))w_1\oplus \sum_{k=m'}^m e_j(\lambda_k)p_k\\\label{TTmul-30+}
&=& L(e_j|_X)+\sum_{k=1}^m e_j(\lambda_k)p_k,
\,\,\, 1\le j\le n.
\eneq
Put 
\beq
\psi_0(f)=\sum_{k=m'}^m f(\lambda_k)p_k\andeqn
\Phi_3(f)=w_1^*\diag(L(f|_{X_1}), \sigma(f|_{X_1}))w_1
\eneq
for all $f\in C(\I^n).$
Then, by \eqref{TTmul-16}, \eqref{TTmul-16+}, \eqref{TTmul-17+}, \eqref{TTmul-30+} and \eqref{TTmul-10},  for $1\le j\le n,$
\beq\label{TTmul-20}
w_1^*\Phi_2(e_j)w_1\oplus \psi_0(e_j)\approx_{\ep/16} 
\Phi_3(e_j)
\oplus \psi_0(e_j)
\approx_
{35\ep/256}\Phi_1(e_j|_{X_1})\approx_{\ep_0} T_j.
\eneq
Put 
\beq
S_j=w_1^*\Phi_2(e_j)w_1\oplus \psi_0(e_j),\,\,\, 1\le j\le n.
\eneq
Then, since $\ep/16+35\ep/256+\ep_0<\ep,$  by \eqref{TTmul-20},
\beq
\|S_j-T_j\|<\ep,\,\,\, 1\le j\le n.
\eneq
Since $\Phi_0$ is a \hm\, so is $\Phi_2.$ Hence
\beq
S_iS_j=S_jS_i,\,\,\, 1\le i,j\le n.
\eneq
\end{proof}

\vspace{0.1in}

{\bf Proof of Theorem \ref{TTTmul}}

\begin{proof}
Let $A=B(H)$ and $J={\cal K}.$ By \cite{CH}, $K_i(B(H)\otimes C(Y))=0$
for all compact metric space $Y,$ $i=0,1.$ By Proposition \ref{Pkkr}, we may apply Theorem \ref{TTmul}.
\end{proof}

\vspace{0.1in}

\begin{NN}{\bf  Proof of Theorem \ref{TTTmodule}}\label{47}\end{NN}
\begin{proof}
Let $B$ be a $\sigma$-unital purely infinite simple \CA\,
and $H$ be a countably generated Hilbert $B$-module.
Let $H_B=\{\{b_n\}: \sum_{n=1}^\infty b_n^*b_n \,\,\, {\rm converges\,\, in\,\, norm}\}.$
Then, by  Kasparov's absorbing 
theorem (Theorem 2 of  \cite{Kas}), $H$ is an orthogonal summand of 
$H_B$ and hence $K(H)$ is isomorphic to a hereditary \SCA\, of $B\otimes {\cal K}$
which is purely infinite simple.
If $K(H)$ is unital, by Theorem 1 of \cite{Kas}, $L(H)=K(H).$ Then the conclusion of 
Theorem \ref{TTTmodule} follows from Theorem \ref{s2-T1} (without referring to synthetic 
spectra). 

Otherwise, since $H$ is countably generated, 
$C:=K(H)$ is non-unital but $\sigma$-unital (see Proposition 3.2 of \cite{BL}).  
By Theorem 1 of \cite{Kas},
$L(H)\cong M(K(H))=M(C).$  
Note that $B\cong B\otimes {\cal K}$ (see Remark 2.5 (c) of \cite{LZ})
which has real rank zero  by \cite{Zhpif}.  It follows from Theorem 3.3 of \cite{Zhsimplecor} 
(see also  Remark 2.5 (b) of \cite{LZ}) that $M(B)/B$
is purely infinite simple, and every projection in $M(B)\setminus B$ is equivalent to $1_{M(B)}.$
Thus, by Proposition \ref{Pkkr}, Theorem \ref{TTmul} applies. 
\end{proof}

\section{A pair of almost commuting self-adjoint operators on Hilbert modules}

\begin{df}[Definition 3.1 of \cite{FR}]\label{DIR}
For a unital \CA\, $A,$ denote by $R(A)$ the set of elements $x\in A$ with the property that for no
ideal $I$ of $A$ is $x+I$ one-sided and not two-sided invertible in $A/I.$
Recall that $A$ is said to have property (IR) if all elements in $R(A)$ belongs to the norm closure of 
$GL(A),$ the group of invertible elements. 

A non-unital \CA\, $A$ has property (IR), if the \CA\, obtained by adjoining a unit to $A$ has property 
(IR).  

It follows from Lemma 4 of \cite{LinWFN} that, for nonzero any projection $p\in A,$
$pAp$ has (IR).

Every \CA\, of stable rank one has property (IR), and every
purely infinite simple \CA\, has property (IR).
\end{df}

\begin{lem}\label{3L3}
Let $\ep>0.$ There exists $\dt(\ep)>0$ satisfying the following:
Let $B$ be a unital purely infinite simple \CA,  $a_1, a_2\in B$ be self-adjoint elements
such that  $\|a_i\|\le 1,$ $i=1,2,$ and 
\beq
\|a_1a_2-a_1a_2\|<\dt.
\eneq
Then there exists a pair of commuting self-adjoint elements $s_1, s_2\in B$
such that
\beq\label{3L3-01}
\|s_i-a_i\|<\ep\andeqn \kappa_1(\lambda-(s_1+is_2))=\kappa_1(\lambda-(a_1+ia_2))
\eneq
(recall Definition \ref{Dind}), whenever ${\rm dist}(\lambda, {\rm sp}(s_1+is_2))\ge \ep.$
\end{lem}
(Note that the identity in \eqref{3L3-01} also implies that $\lambda\not\in {\rm sp}(a_1+ia_2).$)

\begin{proof}
It follows from Theorem \ref{s2-T1} that  exists $\dt>0$ such that,
whenever
\beq
\|a_1a_2-a_2a_1\|<\dt,
\eneq
there exists a compact subset $X\subset \I^2$ 
and a unital injective \hm\, $\phi: C(X)\to B$
such that
\beq
\|\phi(e_i|_X)-a_i\|<\ep/2,\,\,\, i=1,2.
\eneq
Put $s_i=\phi(e_i|_X),$ $i=1,2,$ and $b=s_1+is_2.$
Then $b$ is a normal element in $B$ and ${\rm sp}(b)=X.$
Moreover
\beq\label{3L3-5}
\|b-(a_1+i a_2)\|<\ep.
\eneq
Suppose  that $\lambda\in \C$ such that
${\rm dist}(\lambda, X)\ge \ep.$
Then 
\beq
\|(\lambda-b)^{-1}\|={1\over{{\rm dist}(\lambda, X)}}\le 1/\ep.
\eneq
Hence (also by \eqref{3L3-5}) 
\beq
\|1-(\lambda-b)^{-1}(\lambda-(a_1+ia_2))\|<1.
\eneq
It follows that $(\lambda-b)^{-1}(\lambda-(a_1+ia_2))\in GL_0(B).$ 
Hence $(\lambda-b)$ and $(\lambda-(a_1+i a_2))$ are in the same path connected component 
of $GL(B).$ It follows that $\kappa_1(\lambda-b)=\kappa_1(\lambda-(a_1+ia_2)).$
%
%
%
\end{proof}

\begin{thm}\label{MT-pair+}
Let $\ep>0.$ There exists $\dt(\ep)>0$ satisfying the following:
Suppose that  $A$ is a unital \CA\,  
with an essential ideal $J$  such that:

1. $J$ is $\sigma$-unital, 
has  real rank zero and satisfies  property (IR);

2. the quotient 
$A/J$ is purely infinite simple.

\noindent
Let $T_1, T_2\in A_{s.a.}$ 
with $\|T_i\|\le 1$ ($i=1,2$) such that:
\beq\label{MT-pair-0}
\|T_1T_2-T_2T_1\|<\dt\tand \kappa_1(\lambda-(\pi(T_1+iT_2)))=0
\eneq
for all $\lambda\not\in s{\rm Sp}^\dt(\pi(T_1+iT_2)),$
where $\pi: A\to A/J$ is the quotient map.

\noindent
Then, if either:

 $\bullet$ $A$ has real rank zero, or 
 
 $\bullet$
$s{\rm Sp}^\dt (\pi(T_1+iT_2))$ is path connected,

\noindent
there are $S_1, S_2\in A_{s.a}$
such that 
\beq
S_1S_2=S_2S_1\andeqn \|S_i-T_i\|<\ep,\,\,\, i=1,2.
\eneq

\end{thm}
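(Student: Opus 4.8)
\textbf{Proof plan for Theorem \ref{MT-pair+}.}

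The plan is to follow the blueprint of the Hilbert-space case (Theorem \ref{MT-pair+} for $A=B(H),$ as sketched in the commented-out argument) but carried out relative to a general essential ideal $J$ of real rank zero with property (IR). First I would run the constants backwards: choose $\dt$ small enough that Theorem \ref{TLin-1} (almost commuting self-adjoint matrices are close to commuting ones), Lemma \ref{3L3}, Proposition \ref{Pspuniq}, and Proposition \ref{Pesspsp} all apply with the appropriate intermediate tolerances $\ep_1>\ep_2>\dots$. Set $Z=s{\rm Sp}^\dt((\pi(T_1),\pi(T_2)))$. Since $A/J$ is purely infinite simple, Lemma \ref{3L3} gives a compact $X\subset\I^2$, a unital injective \hm\ $\phi:C(X)\to A/J$ with $\phi(e_j|_X)=\bar s_j$, such that $\|\bar s_j-\pi(T_j)\|<\ep_2/2$ and $\kappa_1(\lambda-(\bar s_1+i\bar s_2))=\kappa_1(\lambda-\pi(T_1+iT_2))$ for ${\rm dist}(\lambda,X)\ge\ep_2/2$; by Proposition \ref{Pspuniq} we also get $X\subset Z\subset \overline{X_{2\ep_2}}$. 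The key consequence (the "claim" in the $B(H)$ proof) is that for any bounded path component $\Omega$ of $\C\setminus X$ containing a point at distance $\ge 2\ep_2$ from $X$, the hypothesis $\kappa_1(\lambda-\pi(T_1+iT_2))=0$ outside $s{\rm Sp}^\dt$ forces $\kappa_1(\lambda-(\bar s_1+i\bar s_2))=0$ for all $\lambda\in\Omega$.

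Next I would thicken $X$ to a $1/k$-brick cover $Y=\mathtt{B}^k_{\overline{X_{\ep_2}}}$ with $2\sqrt2/k$ tiny, so $X\subset Y\subset X_{\ep_3}$ with $Y$ having property (F). By Corollary \ref{C1C} there is a unital injective \hm\ $\psi:C(Y)\to A/J$ with $\psi(e_j|_Y)\approx_{8\ep_3}\bar s_j$ and $\psi_{*1}=\phi_{*1}\circ\pi_{*1}$. A dichotomy on bounded path components $\Omega'$ of $\C\setminus Y$ (either $\Omega'$ lies in the unbounded component of $\C\setminus X$, or every point of $\Omega'$ is at distance $\ge 2\ep_2$ from $X$) combined with the claim shows $\psi_{*1}=0$; since $K_0(C(Y))$ is free, $[\psi]=0$ in $KK(C(Y),A/J)$. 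Then Theorem \ref{T-1fs} produces a normal $r_0\in A/J$ with finite spectrum $\subset Y$ close to $r=\psi(e_1|_Y)+i\psi(e_2|_Y)$, hence $\|\pi(T_1+iT_2)-r_0\|<\ep_1$ with $r_0=\sum_{l=1}^m\lambda_l q_l$. Here the two cases in the hypothesis split: if $A$ has real rank zero, each $q_l$ lifts (via Lemma \ref{Lprojlift} / Lemma \ref{Lspsp}-type reasoning, using that $[q_l]$ lies in the image of $K_0(A)$ — automatic since we can also arrange $[q_l]=0$ or use the real-rank-zero lifting directly) to a projection $P_l\in A$ with $\sum P_l=1$; if instead $s{\rm Sp}^\dt(\pi(T_1+iT_2))$ is path connected then $Y$ (being a connected brick cover of a connected set, after possibly discarding the unbounded complement contribution) is connected, so by the "Moreover" part of Theorem \ref{T-1fs} we may take each $q_l$ with $[q_l]=0$ or $[q_l]=[1_{A/J}]$, and such projections lift to projections in $A$ (the ones with class $0$ by real-rank-zero of $J$ and stability arguments, summing to something equivalent to $1$) — this is the standard device for avoiding the real-rank-zero assumption when the spectrum is connected.

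Having lifted $\sum\lambda_l q_l$ to $\sum\lambda_l P_l$ with mutually orthogonal projections $P_l\in A$ summing to $1$, I would choose $H_0\in J$ with $\|(T_1+iT_2)+H_0-\sum\lambda_l P_l\|<\ep_1$, then use real rank zero of $J$ to find projections $Q_l\in P_lJP_l$ with $Q=\sum Q_l$ almost commuting with $H_0$ and $\|(1-Q)H_0\|$ small; this makes $(1-Q)(T_1+iT_2)(1-Q)$ close to $\sum\lambda_l(P_l-Q_l)$ (already normal) and $Q(T_1+iT_2)Q$ nearly normal inside the corner $QJQ$. The corner $QAQ$ — actually we only need $QJQ$, which after cutting down is a \CA\ with property (IR) by Definition \ref{DIR} — lets us replace $Q(T_1+iT_2)Q$ by a genuine normal element $L$ within $\ep/16$; in the $B(H)$ case $QJQ=Q\mathcal KQ$ is finite-dimensional and one invokes Theorem \ref{TLin-1}, but for a general ideal of real rank zero with property (IR) this is exactly where one needs a normal-approximation result for almost-normal elements in such algebras. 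Assembling $N=\sum\lambda_l(P_l-Q_l)+L$ gives a normal element within $\ep/8$ of $T_1+iT_2$, and $S_1=(N+N^*)/2$, $S_2=(N-N^*)/2i$ are the desired commuting self-adjoint operators. The main obstacle is precisely the last step: approximating the almost-normal corner element $Q(T_1+iT_2)Q$ by a normal element in $QJQ$, i.e.\ the analogue of Theorem \ref{TLin-1} for a $\sigma$-unital \CA\ of real rank zero with property (IR) — this is where the hypotheses on $J$ are really used, and it must be cited from or reduced to the weak-semiprojectivity / classification literature rather than proved by hand, along with the delicate bookkeeping of constants to keep every approximation below the target $\ep$.
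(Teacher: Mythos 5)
Your plan reproduces the paper's argument almost verbatim: Lemma \ref{3L3} to get $\phi:C(X)\to A/J$, Proposition \ref{Pspuniq} for $X\subset Z\subset \overline{X_{2\ep_2}}$, the claim about index vanishing on bounded components far from $X$, a brick cover $Y$ and Corollary \ref{C1C} to pass to $\psi:C(Y)\to A/J$ with $\psi_{*1}=0$, Theorem \ref{T-1fs} for a finite-spectrum approximant $r_0$, Lemma \ref{Lprojlift} for the projection lifting (split into the real-rank-zero case and the connected-spectrum case exactly as you describe), then cutting down by a projection $Q\in J$ to isolate an almost-normal corner element. The one piece you leave hanging is the crux you flag yourself: the approximation of the almost-normal element $Q(T_1+iT_2)Q$ by a normal element of $QJQ$. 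The paper resolves this by citing Theorem 4.4 of Friis--R{\o}rdam (\cite{FR}), which is precisely the quantitative statement that in a unital \CA\ with property (IR), almost commuting self-adjoint elements lie close to a commuting pair; combined with the fact (Lemma 4 of \cite{LinWFN}, recorded in Definition \ref{DIR}) that $QJQ$ inherits (IR) from $J$, this closes the gap you identified. With that citation supplied, your proposal is the paper's proof.
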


\begin{proof}
Fix $0<\ep<1.$
Let $\dt_1(\ep/32)>0$ be given by Theorem 4.4  of \cite{FR}
%
 for  $\ep/32$ in place of $\ep.$

Choose $\ep_1=\min\{\ep/64, \dt_1/64\}.$ 
Let $\dt_2(\ep_1)>0$ be given  by Proposition \ref
{Pspuniq} 
for $\ep_1$ (in place of $\eta$ ) and $k=2.$
Choose $\ep_2=\min\{\ep_1/32, \dt_2/32\}.$

Let $\dt_3:=\dt(\ep_2/2)>0$ be given by  Theorem \ref{3L3} for $\ep_2/2$ (in place of $\ep$).
Put $\ep_3:=\min\{\dt_3, \ep_2/4\}>0.$ 

Let  $\dt_4:=\dt(\ep_3)>0$ be given  by Proposition \ref
{Pesspsp} 
for
$\ep_3$ (in place of $\eta$).

Choose $\dt=\min\{\dt_4, \dt_3, \ep_2/4, \dt_2/2,\ep_1/4\}>0.$


Now assume that $T_1, T_2\in A_{s.a.}$ satisfy the assumption for 
$\dt.$
By Proposition \ref{Pesspsp}, $s{\rm Sp}^{\ep_3}(\pi(T_1), \pi(T))\not=\emptyset.$
Put  (see \eqref{pSp<})
$$
Z:=s{\rm Sp}^{\ep_1}((\pi(T_1), \pi(T_2))\supset s{\rm Sp}^{\ep_3}(\pi(T_1), \pi(T))
$$
Recall that $A/J$ is a purely infinite simple \CA.
Therefore, by the choice of $\dt$ and applying  Theorem \ref{3L3}
we obtain  
a 
compact subset $X\subset \I^2,$ 
a unital injective \hm\, $\phi: C(X)\to   A/J$
such that
\beq\label{MT-pair-7}
\|{\bar s_j}-\pi(T_j)\|<\ep_2/2,\,\,\, j=1,2, \andeqn
\kappa_1(\lambda-t)=\kappa_1(\lambda-\pi(T_1+iT_2))
\eneq
for those $\lambda\in \I^2$ for which ${\rm dist}(\lambda, X)\ge \ep_2/2,$
where ${\bar s}_j=\phi(e_j|_X),$ $j=1,2,$ and $t={\bar s}_1+i {\bar s}_2.$
In particular, the pair $(\pi(T_1), \pi(T_2))$ has an  $\ep_2/2$-spectrum 
$X.$ We may view it as an 
$nSp^{\dt_2}(\pi(T_1), \pi(T_2)).$
On the other hand, by the choice of $\dt_2$ and applying Proposition \ref{Pspuniq},  we have
\beq\label{MT-pair-7+1}
X\subset  Z\subset X_{2\ep_1}.
\eneq

We claim: Suppose that $\Omega\subset \C\setminus X$ is a bounded path connected component
and that there is $\xi\in \Omega$ such that ${\rm dist}(\xi, X)\ge 2\ep_1.$
Then, for any $\lambda\in \Omega,$  
\beq
\kappa_1(\lambda-({\bar s_1}+i{\bar s}_2))=0.
\eneq
To see the claim, we note that,  by \eqref{MT-pair-7+1},
$\xi\not\in Z.$   Then, by \eqref{MT-pair-7} and by the assumption \eqref{MT-pair-0}, 
\beq
\kappa_1(\xi-({\bar s_1}+i{\bar s}_2))=\kappa_1(\xi-(\pi_c(T_1)+i\pi_c(T_2)))=0.
\eneq
It follows that, for any $\lambda\in \Omega,$
\beq
\kappa_1(\lambda-({\bar s_1}+i{\bar s}_2))=\kappa_1(\xi-({\bar s_1}+i{\bar s}_2))=0
\eneq
and the claim follows.
%

Choose $k\in \N$ such that
$2\sqrt{2}/k<\ep_2/16.$ 

Note that, if $Z$ is path connected,  then $X_{2\ep_1}=\cup_{x\in X}B(x, 2\ep_1)$ is also path connected.\footnote{
If $y_i\in B(x_i, 2\ep_1),$  for some $x_i\in X\subset Z,$ $i=1,2,$
then there is a path in $Z\subset X_{2\ep_1}$ connecting $x_1$ to $x_2,$ 
and there is path connecting $y_i$ to $x_i$ in $B(x_i, 2\ep_1),$ $i=1,2.$}
Let  ${\overline{X_{2\ep_1}}}=\{z\in \I^2: {\rm dist}(z, X)\le 2\ep_1\}.$ 
Choose $Y:=\mathtt{B}_{\overline{X}_{2\ep_1}}^k$ the $1/k$-brick cover 
of $\overline{X}_{2\ep_1}$ (see Definition \ref{Dbrickcover}).
Note that if $\overline{X}_{2\ep_1}$ is path connected then $Y$ is also connected.

Put $\ep_4=2\ep_1+2\sqrt{2}/k<2\ep_1+\ep_1/128.$  Then $X_{\ep_1}\subset Y\subset X_{\ep_4}.$

By Corollary \ref{C1C}, there is a unital injective \hm\, 
$\psi: C(Y)\to A/J$ such that
\beq\label{MT-pair-12}
\|\psi(e_j|_Y)-{\bar s}_j\|< \ep_4,\,\,\, 
j=1,2,\andeqn 
\psi_{*1}=\phi_{*1}\circ \pi^X_{*1},
\eneq
where $\pi^X: C(Y)\to C(X)$ is the quotient map by restriction. 
Put $s_j=\psi(e_j|_Y),$ $j=1,2,$ and 
$r=s_1+i s_2.$ Then ${\rm sp}(r)=Y.$ 

Denote by $\Omega_\infty$ the unbounded path connected component 
of  $\C\setminus X.$
Let $\Omega'\subset \C\setminus Y$ be a bounded path component. 
Then there are two cases:

(i) $\Omega'\subset \Omega_\infty;$

(ii)  if $z\in \Omega',$ then ${\rm dist}(z, X)\ge 2\ep_2.$

To see this, let us assume it is not the case (i).  Then $z$ is in a bounded path connected 
open component $\Omega\in \C\setminus X.$  But， since $X_{2\ep_2}\subset  X_{\ep_1}\subset Y,$ 
one must have  $z\not\in X_{2\ep_2}.$ 
Hence ${\rm dist}(z, X)\ge 2\ep_2.$ 

Next we will show that $\psi_{*1}=0.$
To see this, let $\lambda\in \Omega'\subset \C\setminus Y,$ where $\Omega'$
is a bounded path connected component of $\C\setminus Y.$
Put 
\beq
u=(\lambda-(e_1+i e_2)|_Y)([(\lambda-(e_1+i e_2)|_Y)^*(\lambda-(e_1+i e_2)|_Y)]^{-1/2})\in C(Y).
\eneq
If $\Omega'\subset \Omega_\infty$ (i.e., it is case (i)), then, for $\lambda\in \Omega',$
\beq
\pi^X_{*1}([u])=[u|_X]=0\,\,\,{\rm in}\,\,\, K_1(C(X)).
\eneq
Hence, 
by the second part of \eqref{MT-pair-12}, 
\beq
\psi_{*1}([u])=\phi_{*1}\circ \pi^X_{*1}([u])=0.
\eneq
If $\xi\in \Omega'$ and  ${\rm dist}(\lambda, X)\ge 2\ep_2$ (i.e., in the case (ii)),
by the second part of \eqref{MT-pair-12} and the claim above, one computes that
\beq
\psi_{*1}([u])=\phi_{*1}([u|_X])=\kappa_1(\lambda-({\bar s}_1+i {\bar s}_2))=0.
\eneq
 Hence
\beq
\psi_{*1}=0. 
\eneq
Since $Y\subset \C$ and  $K_0(C(Y))$ is finitely generated, $K_0(C(Y))$ is free.
Therefore $[\psi^w]=0$ in $KK(C(Y), A/J).$ 
Then, by Theorem \ref{T-1fs},  there is a normal element $r_0=t_1+i t_2\in  A/J$
with $t_1, t_2\in (A/J)_{s.a.}$
which has finitely many points in the spectrum contained in $Y$ such that
\beq\label{MT-pair-14}
\|r-r_0\|<\ep_2/16.
\eneq
Then, by  \eqref{MT-pair-7},  \eqref{MT-pair-12} and \eqref{MT-pair-14},
\beq
\hspace{-0.4in}\|\pi(T_1+iT_2)-r_0\| &\le &\|\pi(T_1+iT_2)-({\bar s_1}+i{\bar s_2})\|+\|({\bar s_1}+i{\bar s_2})-r\|+\|r-r_0\|\\
&<&\ep_2/2+\ep_4+\ep_2/16\le \ep_2/2+2\ep_1+\ep_1/128+\ep_2/16<3\ep_1.
\eneq
We may write $r_0=\sum_{l=1}^m \lambda_i p_i,$
where $\lambda_i\in Y$ and $p_1, p_2,...,p_m$ are mutually orthogonal non-zero projections in 
$A/J$ and $\sum_{k=1}^m p_k=1_{A/J}.$  
There are two cases: (1) $A$ has real rank zero, then, by Theorem 3.14 of \cite{BP},  every projection in $A/J$ lifts.
(2) If $Z$ is path connected, by 
the ``Moreover" part of Theorem \ref{T-1fs},
we may arrange such that $[p_k]=0$ or $[p_k]=[1_{A/J}]$ in $K_0(A/J).$
Since $A$ is unital, $[p_k]\in \pi_{*0}(K_0(A)).$ 
Thus, 
by  Lemma \ref{Lprojlift}, in both cases, 
there are mutually orthogonal projections 
$P_1, P_2,...,P_m\in A$ such that $\pi(P_j)=p_j,$ $j=1,2,...,m.$
There exists  $H\in {\cal K}$ such that
\beq\label{MT-pair-16}
\|(T_1+iT_2)+H-\sum_{j=1}^m \lambda_j P_j\|<\ep_1. 
\eneq
Then, there are projections $Q_j\in P_j{\cal K}P_j,$ $j=1,2,...,m,$ such that,
with $Q=\sum_{j=1}^m Q_j,$ 
\beq\label{MT-pair-17}
\|QH-HQ\|<\ep_1/16\andeqn \|(1-Q)H\|<\ep_1/16.
\eneq
Note that 
\beq\label{MT-pair-18}
Q(\sum_{j=1}^m \lambda_j P_j)=\sum_{j=1}^m \lambda_jQ_j=(\sum_{j=1}^m \lambda_j P_j)Q.
\eneq
Moreover, by  the second part of \eqref{MT-pair-17} and  \eqref{MT-pair-16}, and  then, by the first part 
of \eqref{MT-pair-17}, \eqref{MT-pair-16}, \eqref{MT-pair-18}, we obtain 
\beq\label{MT-p-40}
&&\hspace{-0.6in}\|(1-Q)(T_1+iT_2)(1-Q)-\sum_{j=1}^m \lambda_j (P_j-Q_j)\|<\ep_1/16+\ep_1=17\ep_1/16\\\label{MT-p-30}
&&\hspace{-0.4in}\andeqn  Q(T_1+iT_2)-(T_1+iT_2)Q\approx_{\ep_1/16} Q(T_1+iT_2+H)-(T_1+iT_2+H)Q\\\label{MT-p-31}
&&\hspace{1.7in}\approx_{2\ep_1} Q(\sum_{j=1}^m \lambda_jP_j)-(\sum_{j=1}^m \lambda_jP_j)Q=0.
\eneq
Hence
\beq\label{MT-p-41}
\|(T_1+iT_2)-((1-Q)(T_1+iT_2)(1-Q)+Q(T_1+iT_2)Q)\|<2(2+1/6)\ep_1
\eneq
It follows  from \eqref{MT-p-30} and \eqref{MT-p-31}, and by the assumption on $T_1$ and $T_2$ that
\beq
Q(T_1+iT_2)QQ(T_1-iT_2)Q\approx_{(2+1/6)\ep_1} Q(T_1+iT_2)(T_1-iT_2)Q\\
\approx_{2\dt} Q(T_1-iT_2)(T_1+iT_2)Q.
\eneq
Note $(2+1/16)\ep_1+2\dt< 3\ep_1+\ep_2<\dt_1.$
Note also that  $QJQ$ has property (IR).
By the choice of $\dt_1$ and  applying Theorem 4.4  of \cite{FR},
there is a normal element $ L\in QJQ$ such that
\beq
\|Q(T_1+iT_2)Q-L\|<\ep/16.
\eneq
Put $N=\sum_{j=1}^m \lambda_j(P_j-Q_j)+L.$ Then $N$ is normal
and, by  \eqref{MT-p-41} and  \eqref{MT-p-40}, 
\beq
\|(T_1+iT_2)-N\|<2(2+1/6)\ep_1+17\ep_1/16+\ep/16<\ep/8.
\eneq
Put  $S_1=(1/2)(N+N^*)$ and $S_2=(1/2i)(N-N^*).$ Then, since $N$ is normal,  $S_1S_2=S_2S_1.$ 
Moreover
\beq
\|S_j-T_j\|<\ep,\,\,\, j=1,2.
\eneq

\end{proof}

{\bf The Proof  of Theorem \ref{MT-pair}}.

\begin{proof}
For (i), choose $A=B(H)$ and $J={\cal K}.$ Note that ${\cal K}$ has real rank zero and stable rank one 
and has property (IR).  So Theorem \ref{MT-pair+} applies.

For (ii), let $A=L(H)$ and $J=K(H).$ 
If $K(H)$ is unital, then $L(H)=K(H).$
As in \ref{47}, this case follows from Theorem \ref{s2-T1}
(regardless of Fredholm index). 
Also in the proof of Theorem \ref{TTTmodule} (see \ref{47}), when $K(H)$ is not unital, 
it is a $\sigma$-unital purely infinite simple \CA\, which has real rank zero and has property (IR).
The fact that $A/J$ is purely infinite is also discussed in \ref{47} (the proof of Theorem \ref{TTTmodule}).
Thus Theorem \ref{MT-pair+} applies. 

For (iii), let $A=L(H)$ and $J=K(H).$ 
As in \ref{47} (the proof of Theorem \ref{TTTmodule}), by Kasparov's theorems,
$L(H)=M(K(H))$ and $K(H)$ is a $\sigma$-unital hereditary \SCA\, 
of a $\sigma$-unital simple \CA\, of real rank zero and stable rank one.
Hence $J=K(H)$ is also a simple \CA\, of real rank zero and stable rank one,
whence $J$ has property (IR).  If $K(H)$ is unital, then $J=K(H)=L(H).$
Therefore the conclusion follows from Theorem 4.4  of \cite{FR}. Otherwise, 
by the assumption, $M(J)/J$ is simple.  It follows that 
$J$ 
 has continuous scale (see of \cite{Linsc04}).
Hence, by Corollary 3.3 of \cite{Linsc04}, $A/J$ is a simple purely infinite simple \CA. 
Thus Theorem \ref{MT-pair+} applies.
\end{proof}



\begin{cor}\label{CCMT-p}
For  $\ep>0,$ there exists $\dt(\ep)>0$ satisfying the following:
Suppose that $H$ is an infinite dimensional separable Hilbert space and 
$T_1, T_2\in B(H)_{s.a.}$ 
with $\|T_i\|\le 1$ ($i=1,2$) such that
\beq\label{CMT-pair-00}
\|T_1T_2-T_2T_1\|<\dt\tand {\rm Ind}(\lambda-(T_1+iT_2))=0
\eneq
for all $\lambda\not\in \overline{({\rm sp}_{ess}(T_1+iT_2))_\dt}.$
Then there are $S_1, S_2\in B(H)_{s.a.}$
such that 
\beq
S_1S_2=S_2S_1\andeqn \|S_i-T_i\|<\ep,\,\,\, i=1,2.
\eneq
\end{cor}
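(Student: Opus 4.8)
The plan is to obtain Corollary \ref{CCMT-p} as a direct consequence of case (i) of Theorem \ref{MT-pair} (applied with $A=B(H)$ and $J={\cal K}$). The only real work is to convert the hypothesis about the closed $\dt$-neighbourhood of the essential spectrum into the hypothesis about the essential $\dt$-synthetic-spectrum that Theorem \ref{MT-pair} requires; this is possible precisely because the synthetic spectrum, unlike the essential spectrum, is stable under small perturbations, and because it contains a neighbourhood of the essential spectrum by Proposition \ref{Pesspsp}.

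First I would fix $\ep>0$ and choose the constants: let $\dt_1=\dt(\ep)\in(0,1)$ be the constant provided by Theorem \ref{MT-pair} in case (i); let $\dt_0=\dt(\dt_1/4)$ be the constant provided by Proposition \ref{Pesspsp} with $\eta=\dt_1/4$; and set $\dt=\min\{\dt_0,\dt_1/8\}$. Now assume $T_1,T_2\in B(H)_{s.a.}$ with $\|T_i\|\le 1$ satisfy \eqref{CMT-pair-00} for this $\dt$. Since $\|\pi_c(T_1)\pi_c(T_2)-\pi_c(T_2)\pi_c(T_1)\|\le\|T_1T_2-T_2T_1\|<\dt\le\dt_0$, Proposition \ref{Pesspsp} applied in the Calkin algebra $B(H)/{\cal K}$ gives ${\rm sp}_{ess}(T_1+iT_2)={\rm sp}(\pi_c(T_1)+i\pi_c(T_2))\subset s{\rm Sp}^{\dt_1/4}((\pi_c(T_1),\pi_c(T_2)))=s{\rm Sp}^{\dt_1/4}_{ess}((T_1,T_2))$. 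Because $\dt<\dt_1/4$, the closed $\dt$-neighbourhood $\overline{({\rm sp}_{ess}(T_1+iT_2))_\dt}$ is contained in the open $(\dt_1/4)$-neighbourhood of $s{\rm Sp}^{\dt_1/4}_{ess}((T_1,T_2))$, which by the inclusion \eqref{pSpdteta} (with $\eta/2=\dt_1/4$, $d=\dt_1$) lies in $s{\rm Sp}^{\dt_1}_{ess}((T_1,T_2))$. Consequently any $\lambda\notin s{\rm Sp}^{\dt_1}_{ess}((T_1,T_2))$ satisfies $\lambda\notin\overline{({\rm sp}_{ess}(T_1+iT_2))_\dt}$, so the hypothesis \eqref{CMT-pair-00} yields ${\rm Ind}(\lambda-(T_1+iT_2))=0$; equivalently, $\kappa_1(\lambda-\pi_c(T_1+iT_2))=0$ for every $\lambda\notin s{\rm Sp}^{\dt_1}((\pi_c(T_1),\pi_c(T_2)))$.

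Finally, since also $\|T_1T_2-T_2T_1\|<\dt\le\dt_1$, the pair $(T_1,T_2)$ meets the hypotheses of Theorem \ref{MT-pair}(i) with the constant $\dt_1$, so that theorem supplies commuting $S_1,S_2\in B(H)_{s.a.}$ with $\|S_i-T_i\|<\ep$, completing the proof. I do not expect any real obstacle: the whole argument is a translation between two equivalent phrasings of ``the index vanishes off the spectrum''. The only points requiring attention are the arithmetic of the thresholds — one must keep $\dt$ strictly below $\dt_1/4$ so that the closed $\dt$-neighbourhood in \eqref{CMT-pair-00} is absorbed by the open $(\dt_1/4)$-neighbourhood appearing in \eqref{pSpdteta}, which is why I take $\dt\le\dt_1/8$ — and the (trivial) remark that Proposition \ref{Pesspsp} is applicable in the Calkin algebra because passage to the quotient does not increase the norm of the commutator.
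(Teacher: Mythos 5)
Your proposal is correct and follows essentially the same route as the paper's own proof: invoke Proposition \ref{Pesspsp} to place $\mathrm{sp}_{ess}(T_1+iT_2)$ inside $s\mathrm{Sp}^{\dt_1/4}_{ess}((T_1,T_2))$, then use \eqref{pSpdteta} to absorb the closed $\dt$-neighbourhood into $s\mathrm{Sp}^{\dt_1}_{ess}((T_1,T_2))$, and finally feed the resulting index condition into Theorem~\ref{MT-pair}(i). The only difference is that you are slightly more explicit than the paper about forcing $\dt<\dt_1/4$ (via $\dt\le\dt_1/8$) so that both the neighbourhood absorption and the commutator bound for applying Theorem~\ref{MT-pair} with parameter $\dt_1$ are simultaneously guaranteed.
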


 \begin{proof}
 Fix $\ep>0.$ Let $\dt_1$ be in Theorem \ref{MT-pair} for $\ep.$
 We may assume that $\dt_1<\ep.$
 Let $\dt:=\dt(\dt_1/4)$ be in Proposition \ref{Pesspsp} for $\dt_1/4$ (in place of $\eta$).
Then, if $T_1, T_2\in B(H)_{s.a.}$ satisfy the assumption \eqref{CMT-pair-00},
then, by Proposition \ref{Pesspsp} (see also \eqref{pSpdteta}). 
\beq
{\rm sp}_{ess}(T_1+iT_2)\subset s{\rm Sp}^{\dt_1/4}_{ess}((T_1, T_2))\subset 
(s{\rm Sp}^{\dt_1/4}_{ess}(T_1, T_2))_{\dt_1/4}\subset s{\rm Sp}^{\dt_1}_{ess}((T_1, T_2)).
\eneq
Then, for any $\lambda\not\in s{\rm Sp}^{\dt_1}_{ess}((T_1, T_2)),$
${\rm Ind}(\lambda-(T_1+iT_2))=0.$ So  
Theorem \ref{MT-pair} 
applies.
\end{proof}

The next proposition provides a  short way to see the second condition in \eqref{MT-pair-0} is necessary.
We also provide a direct proof.

\begin{prop}\label{Lnn-0}
Let  $1>\ep>0.$  There is $\dt>0$ satisfying the following:
Suppose that $T_1,T_2\in B(H)_{s.a.}$ and there are $S_1, S_2\in B(H)_{s.a.}$
such that
\beq
\|T_j-S_j\|<\dt,\,\,\,j=1,2, \tand S_1S_2=S_2S_1.
\eneq
Then, for any $\lambda\not \in sSp_{ess}^\ep(T_1+iT_2),$
\beq\label{Lnn-0-2}
{\rm Ind}(\lambda-(T_1+iT_2))=0.
\eneq
\end{prop}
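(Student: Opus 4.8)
The plan is to compare the Fredholm index of $\lambda-(T_1+iT_2)$ with that of the nearby \emph{normal} operator $N:=S_1+iS_2$, for which the index automatically vanishes off the essential spectrum by Proposition \ref{Pberg}. Write $L=T_1+iT_2$; since $S_1S_2=S_2S_1$, $N$ is normal and $\|L-N\|<2\dt$, hence $\|\pi_c(L)-\pi_c(N)\|<2\dt$. First I would fix the constants: let $\dt_1=\dt(2,\ep/4)$ be given by Proposition \ref{Pspuniq} (with $k=2$, $\eta=\ep/4$), let $\dt_2=\dt(2,\ep/4)$ be given by Proposition \ref{Pappsp}, and set $\dt=\min\{\dt_1/4,\dt_2/4,\ep/9\}$; after rescaling we may assume $\|T_i\|\le 1$.

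The heart of the argument is to trap $X:={\rm sp}_{ess}(N)={\rm sp}(\pi_c(N))$ inside the essential $\dt$-synthetic spectra of $(T_1,T_2)$, so that ``$\lambda$ far from the essential synthetic spectrum'' forces ``$\lambda$ far from $X$''. Since $\pi_c(N)$ is normal, the continuous functional calculus gives a unital injective \hm\ $C(X)\to B(H)/{\cal K}$ sending $e_j|_X\mapsto\pi_c(S_j)$, and because $\|\pi_c(T_j)-\pi_c(S_j)\|\le\|T_j-S_j\|<\dt<\dt_1$ this exhibits $X$ as a $\dt_1$-spectrum, hence a $\dt_1$-near-spectrum, of $(\pi_c(T_1),\pi_c(T_2))$ — i.e. an essential $\dt_1$-near-spectrum of $(T_1,T_2)$. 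The mere existence of this near-spectrum makes $(\pi_c(T_1),\pi_c(T_2))$ $O(\dt)$-commuting, so $Z:=s{\rm Sp}^{\ep/4}((\pi_c(T_1),\pi_c(T_2)))=s{\rm Sp}^{\ep/4}_{ess}((T_1,T_2))$ is nonempty by Proposition \ref{Pappsp}. Applying Proposition \ref{Pspuniq}(1) in $B(H)/{\cal K}$ to $X$ and $Z$ yields $X\subset Z\subset X_{\ep/2}$, and feeding this into the scale-comparison inclusion \eqref{pSpdteta} (with $\eta=\ep/2$, $d=\ep$) gives $\overline{Z_{\ep/4}}\subset s{\rm Sp}^{\ep}_{ess}((T_1,T_2))$. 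Thus $\lambda\notin s{\rm Sp}^{\ep}_{ess}((T_1,T_2))$ implies ${\rm dist}(\lambda,Z)\ge\ep/4$, hence ${\rm dist}(\lambda,X)\ge\ep/4$.

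The rest is routine estimation. For such a $\lambda$, since $\pi_c(N)$ is normal with spectrum $X$, the spectral theorem gives $\|(\lambda-\pi_c(N))^{-1}\|={\rm dist}(\lambda,X)^{-1}\le 4/\ep$; combined with $\|(\lambda-\pi_c(L))-(\lambda-\pi_c(N))\|<2\dt\le 2\ep/9$ this gives $\|(\lambda-\pi_c(L))(\lambda-\pi_c(N))^{-1}-1\|<8/9<1$, so $(\lambda-\pi_c(L))(\lambda-\pi_c(N))^{-1}\in GL_0(B(H)/{\cal K})$. In particular $\lambda-\pi_c(L)$ is invertible (so $\lambda-L$ is Fredholm) and $\kappa_1$ of the product is $0$; since $N$ is normal and $\lambda\notin X={\rm sp}_{ess}(N)$, Proposition \ref{Pberg} gives ${\rm Ind}(\lambda-N)=\kappa_1(\lambda-\pi_c(N))=0$; and multiplicativity of $\kappa_1$ on invertibles yields ${\rm Ind}(\lambda-(T_1+iT_2))=\kappa_1(\lambda-\pi_c(L))=\kappa_1\big((\lambda-\pi_c(L))(\lambda-\pi_c(N))^{-1}\big)+\kappa_1(\lambda-\pi_c(N))=0$. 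The hard part is the middle paragraph: turning the blunt inequality $\|L-N\|<2\dt$ into the geometric separation ${\rm dist}(\lambda,{\rm sp}_{ess}(N))\ge\ep/4$ requires passing through the near-spectrum/synthetic-spectrum dictionary of Proposition \ref{Pspuniq} together with the monotonicity of $\dt$-synthetic spectra across scales in \eqref{pSpdteta}; after that the index computation is just the spectral theorem and additivity of the Fredholm index.
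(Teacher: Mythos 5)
Your proof is correct and follows essentially the same route as the paper's: compare $L=T_1+iT_2$ with the normal $N=S_1+iS_2$ in the Calkin algebra, identify $X={\rm sp}_{ess}(N)$ as an essential $\dt_1$-near-spectrum of the pair, invoke Proposition \ref{Pspuniq}(1) together with the scale-comparison inclusion \eqref{pSpdteta} to trap $X$ well inside $s{\rm Sp}^{\ep}_{ess}((T_1,T_2))$, and then run the $\|(\lambda-\pi_c(N))^{-1}\|=1/{\rm dist}(\lambda,X)$ estimate to place $(\lambda-\pi_c(L))(\lambda-\pi_c(N))^{-1}$ in $GL_0$ and conclude via $\kappa_1(\lambda-\pi_c(N))=0$. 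The only minor differences are cosmetic: you explicitly verify via functional calculus that $X$ is a $\dt_1$-spectrum, you flag the implicit normalization $\|T_j\|\le 1$, and you invoke Proposition \ref{Pappsp} to guarantee $Z\ne\emptyset$ (the paper leaves all three implicit; the last is actually redundant, since the proof of Proposition \ref{Pspuniq}(1) yields $X\subset Z$ directly, so $Z\supset X\ne\emptyset$).
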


Note  that \eqref{Lnn-0-2} also implies that $\lambda-(T_1+i T_2)$ is invertible.

\begin{proof}
Let $\dt_1:=\dt(2, \ep/4)$ be given by Proposition \ref{Pspuniq} for $\ep/4$ (in place of $\eta$).
Choose $\dt=\min\{\dt_1/4, \ep/9\}.$
Suppose that  $T_j$ and $S_j$ are in the statement of the theorem, $j=1,2.$
Put
$L=T_1+i T_2.$ 
and $N=S_1+iS_2.$ Then
\beq
\|L-N\|<2\dt.
\eneq
Since $N$ is normal, $X:={\rm sp}_{ess}(N)$ is an essential  $\dt_1$-near-spectrum of $L.$ 
By Proposition \ref{Pspuniq}, 
\beq
X\subset Z\subset X_{2\ep/4},
\eneq
where  $Z=sSp^{\ep/4}((\pi_c(T_1), \pi_c(T_2))).$ 
By \eqref{pSpdteta}, we may further have 
\beq\label{Lnn-0-8}
X\subset Z\subset \overline{Z_{\ep/4}}\subset  sSp^{\ep}_{ess}((T_1, T_2)).
\eneq
Since $N$ is normal,  
${\rm Ind}(\lambda-N)=0\rforal \lambda\not\in X.$
%
Note, for any $\lambda\in \C,$ 
\beq\label{Lnn-0-10}
\|(\lambda-\pi_c(L))-(\lambda-\pi_c(N))\|<2\dt<2\ep/9.
\eneq
Now if  $\lambda\not\in sSp^{\ep}_{ess}((T_1+iT_2)),$ then, by  \eqref{Lnn-0-8},
${\rm dist}(\lambda, \pi_C(N))\ge \ep/4.$
Since $N$ is normal,  
\beq\label{Lnn-0-11}
\|(\lambda-\pi_c(N))^{-1}\|={1\over{\rm dist}(\lambda, \pi_c(N))}.
\eneq
Hence, by \eqref{Lnn-0-10}, 
\beq\label{Lnn-0-5}
\|(\lambda-\pi_c(L))(\lambda-\pi_c(N))^{-1}-1\|<{2\dt\over{\ep/4}}<{8\over{9}}<1.
\eneq
This implies that $(\lambda-\pi_c(L))(\lambda-\pi_c(N))^{-1}$ is invertible and 
\beq\nonumber
(\lambda-\pi_c(L))(\lambda-\pi_c(N))^{-1}\in GL_0(B(H)/{\cal K}), \,{\rm or}\,\,\,
\kappa_1((\lambda-\pi_c(L))(\lambda-\pi_c(N))^{-1})=0.
\eneq
Since ${\rm Ind}(\lambda-N)=0,$ we conclude that
\beq\nonumber
{\rm Ind}((\lambda-\pi_c(L))=0.
\eneq
\end{proof}

\begin{rem}\label{RR}
Let us compare the results in this section and Theorem 1.1 of \cite{KS}:

While there is significant overlap between the results in this section and Theorem 1.1 of \cite{KS}, the proofs are quite different.  Theorem 1.1 of \cite{KS} provides a quantitative estimate for the distance from $T_1+iT_2$
​ to the set of normal elements with finite spectrum (though the explicit computation of 
$d_1(T_1+iT_2)$ may not be straightforward). In contrast, Theorem \ref{MT-pair+} here is a 
more conceptual result.
In the special case of Theorem  \ref{MT-pair} (i), the condition 
 $d_1(T_1+iT_2)=0$ (defined in \cite{KS}) is equivalent to
 ${\rm Ind}(\lambda-(T_1+iT_2))=0$ for all $\lambda\not\in {\rm sp}(T_1+iT_2).$ 
 
$\mathbf{\bullet}$ {\it Key Differences and Limitations:}

However, when  $K_1(A)\not=\{0\},$ or $K_1(J)\not=\{0\},$
 a normal element $x\in A$ (as in Theorem \ref{MT-pair+}) may not be approximable by normal elements with finite spectrum—even if $\kappa_1(\lambda-\pi(x))=0$ for all $\lambda\not\in {\rm sp}(\pi(x)).$
 In such cases, $d_1​(x)$ (from Theorem 1.1 of \cite{KS}) can be large. For instance, if $x$ is a unitary with $[x]\not=0$ in $K_1(A),$ 
  then $d_1(x)=1,$  rendering
  Theorem 1.1 of \cite{KS}  inapplicable (see Example \ref{Exm1} for details).

Moreover,  if a unital \CA\, $A$  has property (IR), then by Theorem 4.4 of \cite{FR},
 any pair of almost commuting self-adjoint elements $T_1$ and $T_2$ 
 is close to a commuting pair -- regardless of the size of $d_1(T_1+i T_2)$ 
 (see also Example \ref{Exm1} below).

$\mathbf{\bullet}${\it Computational Aspects of $\dt$-Synthetic Spectrum:}

From a computational standpoint, the $\dt$-synthetic spectrum $X$ is tractable: it consists of finitely many balls, and its complement has only finitely many bounded connected components. 
 Since the index remains constant in each such component, it suffices to test one point per bounded 
 components.
This avoids the complication of infinitely many ``holes" in the spectrum

$\mathbf{\bullet}$ {\it Further points:}

$\cdot$  The second condition in  \eqref{MT-pair-0} 
  does not imply  
 $d_1(T_1+iT_2)=0.$

$\cdot$   More importantly, it should be noted  that  Theorems \ref{TTTmul} and \ref{TTTmodule} here primarily address cases where $n>2,$  which lie outside the scope of \cite{KS}.

\end{rem}


\begin{exm}\label{Exm1}
Let $B_1$ be a unital separable purely infinite simple \CA\, with $K_0(B_1)=0$
and $K_1(B_1)\not=\{0\}$ and $J$ be a non-unital separable simple 
\CA\, with $K_1(J)\not=\{0\}.$ We assume that either $J$ is purely infinite simple, or
$J$ is a separable simple \CA\, of real rank zero and stable rank one.
Consider any unital \CA\, $A$ which is given by  the following essential extension:
\beq\nonumber
0\to J\to A\to B_1\to 0.
\eneq
Let $\pi: A\to B_1$ be the quotient map.
Note that $\pi(A)\cong A/J\cong B_1$ is purely infinite, $J$ has property (IR).
Since $K_0(B_1)=0,$ every projection in $B_1=A/J$ lifts to a projection in $A$ (see Lemma \ref{Lprojlift}). It follows from Corollary  3.16 of  \cite{BP} 
that $A$ has real rank zero. 
Hence Theorem \ref{MT-pair+} applies in this case. 
However, from the six-term exact sequence in 
$K$-theory, one computes that $K_1(A)\not=\{0\}.$
Denote  by $N(A)$ the set of normal elements of $A$  and 
$N_f(A)$ the set of those elements in $N(A)$ which has finite spectrum, respectively.
Then the closure $\overline{N_f(A)}\not=N(A).$

 Let $p\in J$ be a non-zero  projection.
Since $pJp$ is either purely infinite or stable rank one, 
$K_1(pAp)=K_1(pJp)\cong K_1(J)\not=\{0\}.$ Thus there is a unitary $u_0\in pJp$
with $[u_0]\not=\{0\}.$ Let $x=(1/4)(1-p)+u_0.$ Then $x$ is a normal element  with
$\pi(x)=(1/4)1_{A/J}.$ Hence $\kappa_1(\lambda-\pi(x))=0$  for any 
$\lambda\not\in {\rm sp}(\pi(x))=\{1/4\}$ (so  Theorem \ref{MT-pair} (iii) applies). 

Then ${\rm sp}(x)=\T\cup \{1/4\}.$
Choose $d=1/32.$ Put $X=\{z\in \C: {\rm dist}(z, {\rm sp}(x))\le d\}.$
Suppose that $y\in A$  with $\|x-y\|<d/2.$ 
Then, for any $\lambda\not\in X,$
\beq
\|1-(\lambda-x)^{-1} (\lambda-y)\|<{d/2\over{d}}=1/2.
\eneq
It follows that $\lambda\not\in {\rm sp}(y).$  In other words, 
${\rm sp}(y)\subset X.$ 

Let $D$ be the  disk in $\C$ with center at $0$ and radius $3/2.$  
Then $X\subset  D.$ 
Choose   $g\in C([0, 3/2])_+$ such that 
$g(r)=1,$ if $r\in [1-d, 3/2],$ $0\le g(r)\le 1,$ if $1/4+d< r\le 1,$ $g(r)=0$
if $r\in [0, 1/4+d].$  Define 
$f\in C(D)$ such that $f(re^{2\pi i \theta})= g(r)e^{2\pi i \theta}$ ($\theta\in [0,1)$).
Note that $f(X)\subset \T\cup \{0\}$ and 
$f(0)=0.$ Then, there exists $0<\dt_1<1/2$ 
such that, for any pair of normal elements $y_1, y_2$ with $\|y_i\|\le 3/2$
and with $\|y_1-y_2\|<\dt_1,$ then 
\beq
\|f(y_1)-f(y_2)\|<1/16.
\eneq
Put $\dt=\min\{d, \dt_1\}>0.$ 

Claim: 
${\rm dist}(x, N_f(A))\ge \dt$  but ${\rm dist}(x, N(A))=0.$

In fact, suppose otherwise there is  $y\in N_f$ such that
\beq
\|x-y\|<\dt.
\eneq
Then, since $\dt\le d,$ ${\rm sp}(y)\subset X,$ and 
\beq
\|f(x)-f(y)\|<1/16.
\eneq
Hence $f(x)=u_0$ and ${\rm sp}(f(y))\subset \T\cup \{0\}.$ 
Therefore $f(y)$ is a normal partial isometry. 
Moreover,
\beq
\|p-f(y)f(y)^*\|\le \|u_0u_0^*-f(y)u_0^*\|+\|f(y)u_0^*-f(y)f(y)^*\|<1/8.
\eneq
There exists a unitray $w\in A$ such that
\beq
\|w-1\|<1/4\andeqn w^*f(y)f(y)^*w=p.
\eneq
Also
\beq\nonumber
\|u_0-w^*f(y)w\| &\le& \|u_0-f(y)\|+\|f(y)-w^*f(y)\|+\|w^*f(y)-w^*f(y)w\|\\\nonumber
&<&1/16+1/4+1/4<1.
\eneq
Note that $w^*f(y)w$ is a unitary in $pJp$ with finite spectrum, hence 
$w^*f(y)w\in U_0(pJp).$ This contradict the fact that $[u_0]\not=0$ in $pJp,$
which proves the claim. 

  In other words, 
Theorem 1.1 in \cite{KS} could not be applied to determine 
${\rm dist}(s_1+is_2, N(A)),$ when $s_1$ and $s_2$ are almost commuting
 (see also  Theorem in  \cite{LinWFN}).
\end{exm}

\section{Epilogue}

In this final section, we examine the connection between the vanishing Fredholm index condition and the closeness of the (synthetic) spectrum to the essential (synthetic) spectrum (Corollaries \ref{Chind=sp} and \ref{Cind=sp-3}).
 We also present an example (Proposition \ref{PLast}) where a higher-dimensional analogue of Theorem \ref{MT-pair}  fails-- highlighting the  obstructions and subtleties behind this failure. Finally, we conclude with a remark on Mumford’s original question regarding quantum theory and measurements, relating it to the second condition in Theorem  \ref{TTTmul}.

Let us first state the following corollary of Theorem \ref{TTmul}.

\begin{cor}\label{CCC}
Let $n\in \N$ and $\ep>0.$ 
There exists $\dt(n, \ep)>0$ satisfying the following:
Suppose that $T_1, T_2,...,T_n\in B(H)_{s.a.}$ with $\|T_i\|\le 1$ ($1\le i\le 1$) such that
\beq
\|T_iT_j-T_jT_i\|<\dt,\,\,\, 1\le i,j\le n
\eneq
and $Y=s{\rm Sp}^{\ep/4}_{ess}((T_1, T_2,...,T_n))\supset \I^n.$
Then there are $S_1, S_2,...,S_n\in B(H)_{s.a.}$
such that
\beq
S_iS_j=S_jS_i,\,\,\, 1\le i,j\le n\andeqn \|T_j-S_j\|<\ep,\,\,\, 1\le j\le n.
\eneq

\end{cor}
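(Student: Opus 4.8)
\textbf{Proof proposal for Corollary \ref{CCC}.}

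The plan is to deduce this from Theorem \ref{TTmul} (equivalently Theorem \ref{TTTmul}), whose only nontrivial hypothesis beyond small commutators is the Hausdorff-closeness condition $d_H(X,Y)<\ep/8$ between the $\ep/8$-synthetic-spectrum $X=s{\rm Sp}^{\ep/8}(T_1,\dots,T_n)$ and the essential one $Y=s{\rm Sp}^{\ep/8}_{ess}(T_1,\dots,T_n)$. So the whole content is to observe that when the essential synthetic-spectrum at scale $\ep/4$ already contains the full cube $\I^n$, this Hausdorff condition is automatically satisfied (at an appropriately chosen smaller scale), and the output operators are produced by Theorem \ref{TTTmul}.

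First I would fix $n$ and $\ep$, and let $\dt_0=\dt(n,\ep)>0$ be the constant furnished by Theorem \ref{TTTmul} for this $n$ and $\ep$. I would then simply set $\dt:=\dt_0$ (shrinking if convenient so that $\dt<\ep$). Now assume $T_1,\dots,T_n\in B(H)_{s.a.}$ with $\|T_i\|\le 1$, $\|T_iT_j-T_jT_i\|<\dt$ for all $i,j$, and $s{\rm Sp}^{\ep/4}_{ess}((T_1,\dots,T_n))\supset \I^n$. The key monotonicity fact is \eqref{pSp<}: for $0<d<\eta$ one has $s{\rm Sp}^{d}((a_1,\dots,a_n))\subset s{\rm Sp}^{\eta}((a_1,\dots,a_n))$. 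Applying this with the $n$-tuple $(\pi_c(T_1),\dots,\pi_c(T_n))$, $d=\ep/8$ and $\eta=\ep/4$ gives
\[
\I^n\subset s{\rm Sp}^{\ep/4}_{ess}((T_1,\dots,T_n))\subset s{\rm Sp}^{\ep/8}_{ess}((T_1,\dots,T_n))=:Y,
\]
wait — the inclusion in \eqref{pSp<} runs the other way, so I must be careful: $s{\rm Sp}^{\ep/8}_{ess}\subset s{\rm Sp}^{\ep/4}_{ess}$, i.e. the \emph{smaller} scale gives the \emph{smaller} set. Thus to get $\I^n$ inside the relevant essential synthetic-spectrum I should instead run Theorem \ref{TTTmul} at scale $\eta_0$ with $\eta_0/4=\ep/8$, i.e. invoke it for the parameter $\ep$ but note that its hypothesis involves $s{\rm Sp}^{\ep/8}_{ess}$; then I need $\I^n\subset s{\rm Sp}^{\ep/8}_{ess}$. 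This follows from the hypothesis $\I^n\subset s{\rm Sp}^{\ep/4}_{ess}$ together with \eqref{pSpdteta}: since $\ep/8\ge 2\cdot(\ep/16)$ and trivially $s{\rm Sp}^{\ep/16}_{ess}\subset s{\rm Sp}^{\ep/8}_{ess}$, one gets a neighbourhood-thickening statement — but more directly, because $\I^n$ is the entire ambient cube, once $\I^n$ is contained in \emph{any} synthetic-spectrum, it must equal it, and then by \eqref{pSp<} applied in the direction $s{\rm Sp}^{\ep/8}\supset s{\rm Sp}^{\ep/4}$? No. The clean route: the hypothesis $s{\rm Sp}^{\ep/4}_{ess}\supset\I^n$ forces $s{\rm Sp}^{\ep/4}_{ess}=\I^n$ (it is always a subset of $\I^n$ up to the bound $M=1$), and by monotonicity $s{\rm Sp}^{\ep/4}_{ess}\subset s{\rm Sp}^{\eta}_{ess}$ for every $\eta\ge\ep/4$; but I need it for $\eta=\ep/8<\ep/4$, so I should instead have stated the corollary's hypothesis at the smaller scale or chosen $\dt$ via Theorem \ref{TTTmul} applied with $\ep':=\ep$ whose internal scale is $\ep/8$. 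To reconcile, I would simply choose the constant $\dt$ by applying Theorem \ref{TTTmul} with $\ep/2$ in place of $\ep$ (internal scale $\ep/16<\ep/4$), so that $s{\rm Sp}^{\ep/16}_{ess}\subset s{\rm Sp}^{\ep/4}_{ess}=\I^n$ is automatic, hence $X\subset\I^n=Y$, hence $d_H(X,Y)=0<\ep/16$; Theorem \ref{TTTmul} then yields commuting $S_j$ with $\|T_j-S_j\|<\ep/2<\ep$.

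The main (and only) obstacle is this bookkeeping of scales: matching the scale $\ep/4$ in the corollary's hypothesis to the internal scale used by Theorem \ref{TTTmul}, using the monotonicity inclusions \eqref{pSp<} and, if needed, the thickening inclusion \eqref{pSpdteta}, and observing that "$\supset\I^n$" forces equality with $\I^n$ so that the Hausdorff distance between $X$ and $Y$ is zero. Once that is arranged, the conclusion is immediate from Theorem \ref{TTTmul}. I would also remark that the "special case" sentence in the Introduction following Theorem \ref{TTTmul} — that fullness of the essential synthetic-spectrum makes the second condition automatic — is exactly this corollary, so the proof is really just a pointer to that remark made precise.
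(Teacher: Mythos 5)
Your instinct that the whole content reduces to making the Hausdorff hypothesis of Theorem \ref{TTTmul} automatic is the same as the paper's, and you are right that the paper's one-line proof amounts to the sandwich
\[
s{\rm Sp}^{\ep/4}_{ess}((T_1,\dots,T_n))\subset s{\rm Sp}^{\ep/4}((T_1,\dots,T_n))\subset\I^n,
\]
which, under the hypothesis $s{\rm Sp}^{\ep/4}_{ess}\supset\I^n$, collapses to equalities $s{\rm Sp}^{\ep/4}_{ess}=s{\rm Sp}^{\ep/4}=\I^n$. You also correctly noticed that the theorem is phrased at scale $\ep/8$ while the corollary's hypothesis is at scale $\ep/4$, and that the monotonicity \eqref{pSp<} runs the ``wrong'' way for closing that gap directly. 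That observation is a genuine one and is glossed over in the paper's proof.

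However, your proposed repair is not correct. You apply Theorem \ref{TTTmul} with $\ep/2$ in place of $\ep$, so that the internal objects are $X=s{\rm Sp}^{\ep/16}$ and $Y=s{\rm Sp}^{\ep/16}_{ess}$, and then you assert ``$X\subset\I^n=Y$''. But all you actually know is $s{\rm Sp}^{\ep/16}_{ess}\subset s{\rm Sp}^{\ep/4}_{ess}=\I^n$, which gives $Y\subset\I^n$, not $Y=\I^n$. Passing to a \emph{smaller} scale makes the synthetic-spectrum smaller (the test functions $\Theta_{\xi,\eta}$ have smaller support and the balls have smaller radius), so fullness at scale $\ep/4$ says nothing about fullness at scale $\ep/16$. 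The same objection applies to your first attempt: fullness at $\ep/4$ does not give fullness at $\ep/8$. So the equality $d_H(X,Y)=0$ you want is not established.

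The clean way to make the corollary match the theorem is one of two options, and you should pick one explicitly rather than assert an unproved equality. Option one: read the hypothesis as $s{\rm Sp}^{\ep/8}_{ess}\supset\I^n$ (this is the scale the Introduction's remark after Theorem \ref{TTTmul} actually uses); then since $s{\rm Sp}^{\ep/8}_{ess}\subset s{\rm Sp}^{\ep/8}\subset\I^n$ one gets $X=Y=\I^n$, $d_H(X,Y)=0<\ep/8$, and Theorem \ref{TTTmul} applies directly with tolerance $\ep$. Option two: keep the hypothesis at $\ep/4$ but apply Theorem \ref{TTTmul} with $\ep'=2\ep$; then the theorem's $X=s{\rm Sp}^{\ep'/8}=s{\rm Sp}^{\ep/4}$ and $Y=s{\rm Sp}^{\ep/4}_{ess}$ are both $\I^n$ by the sandwich, but the output tolerance is $\ep'=2\ep$. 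Neither option produces exactly ``hypothesis at $\ep/4$, conclusion at $\ep$'' without further argument, and no such argument appears in the paper either: the fullness hypothesis is needed at the internal scale $\ep/8$ of the theorem for the deduction to be immediate. So the central point you should have made is that $Y=\I^n$ at the smaller scale does not follow, rather than asserting it.
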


\begin{proof}
This follows from Theorem \ref{TTmul} and the fact that 
\beq\nonumber
s{\rm Sp}_{ess}^{\ep/4}((T_1, T_2, ...,T_n))\subset s{\rm Sp}^{\ep/4}((a_1,a_2,...,a_n))\subset \I^n.
\eneq
\end{proof}

The next corollary shows that, when $n=2,$  the second condition in \eqref{TTmul-0}  is stronger than 
that of vanishing index  in Theorem \ref{MT-pair}. 

\begin{cor}\label{Chind=sp}
For any $\eta>0,$ there exists $\dt>0$ 
 satisfying the following:
  Let $A$ and $H$ be in all cases (i), (ii) and (iii) of Theorem \ref{MT-pair}.
Suppose that $T_1, T_2\in L(H)_{s.a.},$ 
\beq\label{Chind-sp-0}
\|T_1T_2-T_2T_1\|<\dt\andeqn
d_H(s{\rm Sp}^{\eta}((T_1, T_2)), s{\rm Sp}^\eta((\pi(T_1), \pi(T_2)))<\eta.
\eneq
Then, for any $\lambda\not\in  s{\rm Sp}^{4\eta}((\pi(T_1), \pi(T_2))),$
\beq
{\rm \kappa_1}(\lambda-(\pi(T_1+iT_2)))=0.
\eneq
\end{cor}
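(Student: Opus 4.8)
The plan is to reduce this to Proposition \ref{Pspuniq}, the key "uniqueness" statement relating the various near- and synthetic-spectra, together with the observation that in the Calkin-type algebra $A/J$ the class of $\lambda - \pi(T_1+iT_2)$ is controlled by the spectrum of a nearby \emph{normal} element. First I would invoke the previous results of the paper to produce, from the almost-commutation hypothesis, a genuinely commuting pair: for $\dt$ small enough, Lemma \ref{3L3} applied in the purely infinite simple algebra $A/J$ (case (ii),(iii)) or directly the spectral picture of Theorem \ref{s2-T1} gives a compact $X \subset \I^2$ and a unital injective homomorphism $\phi: C(X) \to A/J$ with $\|\phi(e_j|_X) - \pi(T_j)\| < \eta'$ for a suitably chosen auxiliary constant $\eta'$, where $X$ is then (by Definition \ref{Dappsp}) an $\eta'$-spectrum, hence an $\eta'$-near-spectrum, of $(\pi(T_1),\pi(T_2))$. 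Setting $t = \phi(e_1|_X) + i\,\phi(e_2|_X)$, this $t$ is a normal element of $A/J$ with ${\rm sp}(t) = X$.

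Next I would use Proposition \ref{Pspuniq}(1), applied with $(\pi(T_1),\pi(T_2))$ in place of $(a_1,\dots,a_k)$ and with $\eta$ as the tolerance: choosing $\dt$ below the $\dt(2,\eta)$ coming from that proposition, we get $X \subset s{\rm Sp}^{\eta}((\pi(T_1),\pi(T_2))) =: Z_{\rm ess}$ and $Z_{\rm ess} \subset X_{2\eta}$. Now feed in the second hypothesis of \eqref{Chind-sp-0}: since $d_H(s{\rm Sp}^{\eta}((T_1,T_2)), Z_{\rm ess}) < \eta$, any $\lambda$ with $\lambda \notin s{\rm Sp}^{4\eta}((\pi(T_1),\pi(T_2)))$ satisfies, by the nesting \eqref{pSpdteta} (so that a $2\eta$-neighbourhood of $Z_{\rm ess}$ sits inside $s{\rm Sp}^{4\eta}$), that ${\rm dist}(\lambda, X) \geq 2\eta$. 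Here one must be a little careful to chain the Hausdorff-distance estimate and the $2\eta$-thickening through \eqref{pSpdteta} to land at the stated $4\eta$; I would pick the constants so this bookkeeping is comfortable, e.g. require $\dt < \dt(2,\eta/2)$ and absorb slack into the difference $4\eta - 2\eta$.

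Then the index computation is routine spectral theory: since $t$ is normal with ${\rm sp}(t) = X$ and ${\rm dist}(\lambda, X) \geq 2\eta$, we have $\|(\lambda - t)^{-1}\| = 1/{\rm dist}(\lambda,X) \leq 1/(2\eta)$, while $\|(\lambda - \pi(T_1+iT_2)) - (\lambda - t)\| = \|\pi(T_1+iT_2) - t\| < 2\eta'$. Taking $\eta'$ small relative to $\eta$ (say $2\eta' \cdot (1/2\eta) < 1$, i.e. $\eta' < \eta/2$) gives $\|1 - (\lambda - \pi(T_1+iT_2))(\lambda - t)^{-1}\| < 1$, so $(\lambda - \pi(T_1+iT_2))(\lambda - t)^{-1} \in GL_0(A/J)$ and in particular $\lambda - \pi(T_1+iT_2)$ is invertible with $\kappa_1(\lambda - \pi(T_1+iT_2)) = \kappa_1(\lambda - t)$. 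Finally $\kappa_1(\lambda - t) = 0$ because $\lambda$ lies in a component of $\C \setminus {\rm sp}(t)$ on which $\kappa_1(\lambda - t)$ is locally constant and vanishes at infinity — this is exactly the content of the argument in Lemma \ref{3L3} (or Proposition \ref{Pberg} in the Hilbert-space case), where one connects $\lambda - t$ inside $GL(A/J)$ to an invertible of the form $\lambda - (\text{finite-spectrum normal})$ and reads off triviality of the index. The main obstacle, such as it is, is purely the constant-chasing in the second paragraph: one has three different thickenings (the $\eta$ inside $s{\rm Sp}^\eta$, the $<\eta$ Hausdorff gap, and the $2\eta$ from Proposition \ref{Pspuniq}) that must be combined to fit inside the $4\eta$ of the conclusion, and one must simultaneously keep $\eta'$ small enough that the Neumann-series inequality closes; choosing $\dt = \min\{\dt(2,\eta/2),\ \dt_{\ref{3L3}}(\eta/4),\ \eta/8\}$ should do it.
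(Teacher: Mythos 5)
Your proposal has a genuine gap in the final step, and the route you take is in fact the wrong one. You construct a normal element $t = s_1 + i s_2$ in $A/J$ (via Lemma~\ref{3L3}) approximating $\pi(T_1 + i T_2)$, show $\kappa_1(\lambda - \pi(T_1+iT_2)) = \kappa_1(\lambda - t)$ by a Neumann-series perturbation, and then assert $\kappa_1(\lambda - t) = 0$ ``because $\lambda$ lies in a component on which $\kappa_1$ is locally constant and vanishes at infinity,'' citing Lemma~\ref{3L3} and Proposition~\ref{Pberg}. Neither reference supports this. Lemma~\ref{3L3} establishes only the \emph{equality} $\kappa_1(\lambda - t) = \kappa_1(\lambda - \pi(T_1+iT_2))$, not vanishing. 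Proposition~\ref{Pberg} applies to a normal operator $N$ in $B(H)$ and uses Berg's diagonalization to get ${\rm Ind}(\lambda - N) = 0$; it does not apply to a normal element of the quotient $A/J$. The vanishing-at-infinity argument only controls the \emph{unbounded} component of $\C \setminus {\rm sp}(t)$; for $\lambda$ in a bounded ``hole'' the class $\kappa_1(\lambda - t)$ can be nonzero --- this is precisely the Fredholm index obstruction Theorem~\ref{MT-pair} is built around (compare the unilateral shift, whose image in the Calkin algebra is a normal unitary with nontrivial index at points inside the disk). A telling symptom is that your argument never uses the Hausdorff-distance hypothesis in an essential way: the inclusion $X \subset s{\rm Sp}^\eta((\pi(T_1),\pi(T_2)))$ you rely on follows from Proposition~\ref{Pspuniq} alone with no Hausdorff condition at all.

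The paper's proof runs along a different axis. Proposition~\ref{Pesspsp} gives ${\rm sp}(T_1 + iT_2) \subset s{\rm Sp}^{\eta}((T_1,T_2))$ in $L(H)$; the Hausdorff hypothesis \eqref{Chind-sp-0} together with \eqref{pSpdteta} pushes this to ${\rm sp}(T_1+iT_2) \subset s{\rm Sp}^{4\eta}((\pi(T_1),\pi(T_2)))$, so if $\lambda \notin s{\rm Sp}^{4\eta}((\pi(T_1),\pi(T_2)))$ then $\lambda - (T_1 + iT_2)$ is already invertible \emph{in $L(H)$ itself}. Since $K_1(L(H)) = 0$ in all three cases (by Cuntz--Higson for (i) and (ii), and by Lemma 3.3 of \cite{Linscand95} for (iii)), this invertible lies in $GL_0(L(H))$, and pushing forward through $\pi$ yields $\kappa_1(\lambda - \pi(T_1+iT_2)) = 0$. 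The entire point of the Hausdorff hypothesis is to upgrade invertibility modulo $K(H)$ to invertibility in $L(H)$; once that is done, the vanishing of $K_1(L(H))$ kills the index automatically. Trying instead to read the index off a nearby normal element of the quotient, as you do, cannot close the argument.
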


\begin{proof}
Let $\dt_1:=\dt(\eta/4)>0$ be given by  Proposition \ref{Pesspsp} for $\eta/4.$

We choose $\dt_2>0$ such that the conclusion of Proposition \ref{Pspuniq} holds for $\min\{\dt_1/2, \eta/4\}$ (in place of $\eta$). 
Then we choose $\dt_3>0$ such that the conclusion of Proposition \ref{Pappsp} holds for $\dt_2/2.$
Choose $\dt=\dt_3.$ Suppose that \eqref{Chind-sp-0} holds.
Put $L=T_1+iT_2.$
Thus  (applying Proposition \ref{Pappsp}) $Z:=s{\rm Sp}^{\dt_1}((T_1, T_2)), Z_e:=s{\rm Sp}^{\dt_1}(\pi(T_1), \pi(T_2)),$ 
 $X:=nSp^{\dt_2}((T_1, T_2)),$ and $X_e:=nSp^{\dt_2}(
\pi(T_1), \pi(T_2))$
 are all non-empty sets.
By Proposition \ref{Pspuniq}, 
\beq
X\subset Z\subset X_{\dt_1}\andeqn X_e\subset Z_e\subset (X_e)_{\dt_1}.
\eneq
By applying Proposition \ref{Pesspsp}, we  have 
\beq
{\rm sp}(T_1+iT_2)\subset s{\rm Sp}^{\eta/4}((T_1, T_2)) \andeqn {\rm sp}(\pi(T_1+iT_2))\subset s{\rm Sp}^{\eta/4}((\pi(T_1),\pi(T_2))).
\eneq
Applying \eqref{pSpdteta}, we obtain 
\beq
{\rm sp}(T_1+iT_2)\subset (s{\rm Sp}^{\eta/4}((T_1, T_2))_{\eta/4}\subset s{\rm Sp}^{\eta}((T_1, T_2)).
\eneq
Since 
\beq
d_H(s{\rm Sp}^\eta((T_1,T_2)), s{\rm Sp}^{\eta}(\pi(T_1), \pi(T_2)))<\eta,
\eneq
one has  (see \eqref{pSpdteta})
\beq
{\rm sp}(T_1+iT_2)\subset \overline{s{\rm Sp}^{\eta}(\pi(T_1), \pi(T_2)))_{\eta}}\subset s{\rm Sp}^{4\eta}((\pi(T_1), \pi(T_2))).
\eneq
So, if $\lambda\not\in s{\rm Sp}^{4\eta}((\pi(T_1), \pi(T_2))),$ then
$\lambda\not\in {\rm sp}(T_1+iT_2).$ 
By \cite{CH}, in case (i) and (ii), since $A$ is stable, $K_1(L(H))=K_1(M(A))=\{0\}.$
For case (iii), $K_1(L(H))=K_1(M(A))=0$ follows from Lemma 3.3 of \cite{Linscand95} (note that, 
proved in \cite{LinBP}, if $C$ has real rank zero, then ${\rm cer}(C)\le 1+\ep\le 2$).
Therefore
\beq
\kappa_1(\lambda+(T_1+iT_2))=0.
\eneq
Hence 
\beq
\kappa_1(\lambda-\pi(T_1+iT_2))=0\,\,\, {\rm in}\,\,\, K_1(L(H)/K(H)).
\eneq
\end{proof}


In general, we have the following statement, which follows immediately from Theorem \ref{TTmul}.  Following the same spirit as the proof of Corollary \ref{Chind=sp},  one could alternatively give a direct proof without invoking Theorem \ref{TTmul}.  However, since this approach is already demonstrated in Corollary \ref{Chind=sp},  we avoid introducing additional $KK$-theoretic technicalities -- particularly to spare readers less familiar with $KK$-theory.

\begin{cor}\label{Cind=sp-3}
Let $n\in \N.$ For any $\ep>0,$ there exists $\dt(n, \ep)>0$ satisfying the following:
Suppose that $T_1,T_2,...,T_n\in B(H)_{s.a.}$ with $\|T_i\|\le 1$ ($i=1,2$) 
such that
\beq
\|T_iT_j-T_jT_i\|<\dt,\,\,\, 1\le i, j \le n, \tand d_H(X, Y)<\ep/8,
\eneq
where $X=s{\rm Sp}^{\ep/8}((T_1, T_2,...,T_n))$ and 
$Y=s{\rm Sp}_{ess}^{\ep/8}((T_1,T_2,...,T_n)).$ 
Then there exists a  $1/k$-brick combination $\Omega\subset \I^n$ and a unital monomorphism 
$\phi: C(\Omega)\to B(H)/{\cal K}$ 
such that 
\beq
&&\|\phi(e_j|_\Omega)-\pi_c(T_j)\|<\ep,\,\,\, 1\le j\le n, \andeqn\\
&& [\phi^\omega]=0\,\,\,{\rm in}\,\,\, KK(C(\Omega), B(H)/{\cal K}).
\eneq
\end{cor}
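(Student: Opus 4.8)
The plan is to deduce Corollary \ref{Cind=sp-3} directly from Theorem \ref{TTmul} (equivalently, from Theorem \ref{TTTmul} and the machinery built to prove it), by observing that in the course of that proof one actually constructs a genuine commuting approximant whose normal form has finite spectrum, and it is precisely such a finite-spectrum homomorphism that certifies the vanishing of the $KK$-class. More concretely, here is how I would carry it out.

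First, set $A=B(H)$, $J=\mathcal{K}$, $\pi=\pi_c$; by \cite{CH} we have $K_i(B(H)\otimes C(Y))=0$ for every compact metric space $Y$, and by Proposition \ref{Pkkr} the algebra $B(H)=M(\mathcal{K})$ satisfies condition $(R,r,b,T,s)$, so all hypotheses of Theorem \ref{TTmul} are in force. Given $\ep>0$, apply Theorem \ref{TTmul} (with $\ep$ replaced by a suitable $\ep_1<\ep$ to be fixed at the end) to produce commuting $S_1,\dots,S_n\in B(H)_{s.a.}$ with $\|T_j-S_j\|<\ep_1$; the $S_j$ generate a commutative $C^*$-subalgebra, hence $(S_1,\dots,S_n)$ corresponds to a unital homomorphism from $C(Z_0)$ into $B(H)$ for some compact $Z_0\subset\I^n$. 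Passing to the quotient, $(\pi_c(S_1),\dots,\pi_c(S_n))$ gives a commuting tuple in the Calkin algebra with $\|\pi_c(T_j)-\pi_c(S_j)\|<\ep_1$.

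Second, I would run the internal argument of the proof of Theorem \ref{TTmul} at the level of the Calkin algebra to obtain the \emph{finite-spectrum} normal approximant there. Indeed, the construction in that proof produces, via Lemma \ref{Ldig}, Theorem \ref{Tgl99} and Theorem \ref{T-1fs}, a homomorphism $\sigma$ with finite-dimensional image together with a unitary conjugation; unwinding, one extracts a unital homomorphism $\psi_1\colon C(\Omega)\to B(H)/\mathcal{K}$ with finite-dimensional range, where $\Omega$ is a $1/k$-brick combination with $X\subset\Omega\subset X_\eta$ for a small $\eta$ (so that $\Omega$ has property (F) and $K_i(C(\Omega))$ is finitely generated, by Proposition \ref{PKfinite}), and with $\|\psi_1(e_j|_\Omega)-\pi_c(T_j)\|<\ep$. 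Replacing $\psi_1$ by a genuinely injective perturbation on the Calkin algebra (using that $B(H)/\mathcal{K}$ is purely infinite simple, one can add a tiny injective summand factoring through $\mathcal{O}_2$, as in the proof of Theorem \ref{C1}) yields a \emph{unital monomorphism} $\phi\colon C(\Omega)\to B(H)/\mathcal{K}$ still $\ep$-close to $\pi_c(T_j)$ on the generators. Since $\phi$ factors (up to a perturbation by a homomorphism through $\mathcal{O}_2$, which is $KK$-trivial) through a homomorphism with finite-dimensional range, i.e. $\phi(f)=\sum_i f(x_i)p_i+(\text{a piece factoring through }\mathcal{O}_2)$, the restriction $\phi^\omega=\phi|_{C_0(\Omega(\Omega))}$ kills the class of the base point in each component, so $[\phi^\omega]=0$ in $KK(C(\Omega),B(H)/\mathcal{K})$ — which is exactly the hypothesis form appearing in Theorem \ref{T-1fs} and precisely the conclusion we want.

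Third, I would carefully bookkeep the constants: choose $\ep_1$ so small that, after the brick-cover enlargement (diameter $\le\sqrt{n}/k$) and the perturbations coming from Theorem \ref{T-1fs} and the injectivity adjustment, the final estimate on $\|\phi(e_j|_\Omega)-\pi_c(T_j)\|$ is $<\ep$; this is routine once one tracks the triangle inequalities exactly as in the proof of Theorem \ref{TTmul}. The main obstacle — and the only nontrivial point — is arranging simultaneously that $\phi$ be an honest unital monomorphism \emph{and} that its associated class $[\phi^\omega]$ vanish in $KK(C(\Omega),B(H)/\mathcal{K})$; the resolution is the standard device in this circle of ideas, namely to build $\phi$ as $\psi_1\oplus(\text{injective unital embedding of }C(\Omega)\text{ into a corner }pBp\text{ with }p\text{ of trivial }K_0\text{-class, factoring through }\mathcal{O}_2)$, absorbing the $\mathcal{O}_2$-piece afterwards by pure infiniteness (Theorem 1.7 of \cite{Dd}), so that injectivity is automatic and the $KK$-contribution of the added summand is $0$ while $\psi_1$ itself, having finite-dimensional range with all $p_i$ of $K_0$-class $0$ (arranged as in Theorem \ref{T-1fs}), contributes $[\psi_1^\omega]=0$. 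All the $K$-theoretic inputs ($K_i(C(\Omega))$ finitely generated hence the splitting $KK(C(\Omega),B)=KK(C_0(\Omega(\Omega)),B)\oplus KK(\C,B)$, and $K_i(B(H)\otimes C(Y))=0$) are already available in the excerpt, so no new ingredients are needed.
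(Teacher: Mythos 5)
The paper's proof of this corollary is short and takes a genuinely different route from yours: having produced commuting $S_1,\dots,S_n\in B(H)$ via Theorem \ref{TTmul}, it lets $\Omega_0$ be the joint spectrum (so $C(\Omega_0)$ is the unital $C^*$-subalgebra of $B(H)$ generated by $1,S_1,\dots,S_n$) with inclusion $\psi\colon C(\Omega_0)\hookrightarrow B(H)$, and then invokes $K_i(B(H)\otimes C(Y))=0$ (Cuntz--Higson) to conclude $[\psi]=0$ in $KK(C(\Omega_0),B(H))$, hence $[\phi^\omega]=[(\pi_c\circ\psi)^\omega]=0$ by functoriality. The mechanism for killing the $KK$-class is \emph{factorization through $B(H)$}, which has trivial $KK$-theory; this uses only the \emph{statement} of Theorem \ref{TTmul} and is independent of the fine structure of the $S_j$'s.

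Your route instead reconstructs a finite-spectrum approximant in the Calkin algebra and kills the class by factoring through a finite-dimensional algebra (plus an absorbed $\mathcal{O}_2$-summand). That can be made to work --- in fact the proof of Theorem \ref{TTmul} does output $S_j$'s of finite spectrum, so $\pi_c(S_j)$ has finite spectrum in $B(H)/\mathcal{K}$ --- but your framing of it as ``running the internal argument of the proof of Theorem \ref{TTmul} at the level of the Calkin algebra'' should be avoided: Theorem \ref{Tgl99} requires $K_i(A\otimes C(Y))=0$ for all compact metric spaces $Y$, which holds for $A=B(H)$ but \emph{fails} for $A=B(H)/\mathcal{K}$ (where $K_1\cong\Z$). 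What you really want is to apply $\pi_c$ to the output of the original construction upstairs in $B(H)$, not to rerun it downstairs. With that correction your approach is sound, but it is heavier than the paper's and leans on a feature of the \emph{proof} of Theorem \ref{TTmul} rather than on its statement. Both routes still require the same final surgery to replace $\Omega_0$ by a $1/k$-brick cover and to upgrade $\phi$ to a unital monomorphism via an $\mathcal{O}_2$-summand (as in Corollary \ref{C1C}), which preserves the $KK$-vanishing.
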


\begin{proof}
Applying Theorem \ref{TTmul}, one obtains commuting 
$n$-tuple of self-adjoint operators $S_1,S_2,...,S_n\in B(H).$
Let $C=C(\Omega)$ be the \SCA\, of $B(H)$ generated by
$1, S_1, S_2, ...,S_n.$ Define $\psi: C(\Omega)\to B(H)$ to be the embedding.
Since $K_i(B(H)\otimes C(Y))=0$ for any compact metric space $Y,$ $i=0,1,$
we conclude that $KK([\psi])=0.$ Consequently $[\phi^\omega]=0,$
where $\phi=\pi_c\circ \psi.$
\end{proof}

We now present a counterexample demonstrating  that the $n$-dimensional analogue  of   
Theorem \ref{MT-pair}  (i)  is false when $n\ge 3.$  Note that, in the next proposition, 
the triple $(T_1, T_2,T_3)$  has only a  single point in the essential $\eta$-spectrum. 
Hence the $KK$-theory obstruction vanishes in $B(H)/{\cal K}.$  This example 
also underscores the significance  of Theorems \ref{TTTmul} and \ref{TTTmodule}.

\begin{prop}\label{PLast}
There exists $0<\ep_0<1$ and a sequence of 
triples of self-adjoint operators $T_{n,1}, T_{n,2},T_{n,3}\in B(H)$
such that $\|T_{n,j}\|\le 1$ (for all $n$),
\beq\nonumber
\lim_{n\to\infty}\|T_{n,i}T_{n,j}-T_{n,j}T_{n,i}\|=0,\,\,\, 
\pi_c(T_j)=1_{B(H)/{\cal K}},\,\,\,\, 1\le i,j\le 3, \tand\\\nonumber
\inf\{\max_{1\le j\le 3}\{\|T_{n,j}-S_{n,j}\|: S_{n,j}\in B(H)_{s.a.} \andeqn S_{n,j}S_{n,i}=S_{n,i}S_{n,j},\,\,1\le i,j\le 3\}\}\ge 
\ep_0,
\eneq
where infimum is taken from all possible commuting triples $\{S_{n,j}: 1\le j\le 3\}$ of selfadjoint operators and all $n\in \N.$
\end{prop}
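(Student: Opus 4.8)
The plan is to exhibit the obstruction by pulling back a known example of almost commuting matrices (or almost commuting operators on a finite-dimensional space) whose ``non-triviality'' is measured by a topological invariant that persists to the Calkin-algebra level but is invisible to $KK$-theory of $B(H)/{\cal K}$. The classical source here is the almost commuting triple of self-adjoint matrices coming from a Bott-type element on $S^2$ (or, more precisely, from the fact that there are almost commuting unitaries/self-adjoints on $\C^N$ that cannot be perturbed to commuting ones with control independent of $N$, with the obstruction living in $\pi_2$ rather than $\pi_1$). Concretely, I would take a sequence of triples $A_{n,1},A_{n,2},A_{n,3}\in M_{k(n)}$ of self-adjoint matrices with $\|A_{n,j}\|\le 1$, $\|[A_{n,i},A_{n,j}]\|\to 0$, whose joint ``quasi-spectrum'' concentrates near a $2$-sphere $S\subset\I^3$ and which carry a nonzero second Chern/Bott number, so that $\inf\{\max_j\dist(A_{n,j},B_{n,j}):B_{n,j}\text{ commuting s.a.}\}\ge \ep_0$ for some universal $\ep_0>0$ and all large $n$. (Such a sequence exists; this is the three-dimensional analogue where the Voiculescu/Loring obstruction does not vanish — the contrast with $n=2$ being precisely the content of Theorem~\ref{TTTmul}'s failure announced in the introduction.)

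The next step is to assemble these finite-dimensional triples into operators on $H$ whose images in the Calkin algebra are scalar. Write $H=\bigoplus_{m=1}^\infty \C^{k(n)}$ (a fixed countable direct sum of copies of $\C^{k(n)}$, for each fixed $n$), and set $T_{n,j}=\bigoplus_{m=1}^\infty A_{n,j}$ acting on this $H$. Then $\|T_{n,j}\|=\|A_{n,j}\|\le 1$ and $\|[T_{n,i},T_{n,j}]\|=\|[A_{n,i},A_{n,j}]\|\to 0$ as $n\to\infty$. However this does not make $\pi_c(T_{n,j})$ scalar. To fix that I would instead use the standard trick: replace $A_{n,j}$ by a rescaled-and-shifted version sitting inside a large ``ambient'' $1_{B(H)/{\cal K}}$. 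Precisely, take $T_{n,j}=P_n\,(1)\,P_n\oplus(1-P_n)$ where $P_n$ is a projection of infinite rank and infinite co-rank and where on $P_nHP_n\cong H$ we put a block-diagonal copy of $A_{n,j}$ conjugated to stay within $\ep$ of $1$ — more cleanly, one fixes a single unit vector datum and forms $T_{n,j}=\bigoplus_{m}C_{m}$ with all but finitely many $C_m=I$ and finitely many (growing with $n$) $C_m$ equal to a suitably placed copy of $A_{n,j}$, rescaled to live in $[1-\eta,1]$. Doing the bookkeeping so that $\pi_c(T_{n,j})=1_{B(H)/{\cal K}}$ while $T_{n,j}$ restricted to a fixed infinite-rank summand reproduces (up to an affine change of coordinates that does not kill the Bott obstruction, only shrinks $\ep_0$) the matrix example, is routine but must be done carefully.

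For the lower bound, suppose for contradiction that for infinitely many $n$ there exist commuting self-adjoint $S_{n,1},S_{n,2},S_{n,3}\in B(H)$ with $\max_j\|T_{n,j}-S_{n,j}\|<\ep_0$. Restricting to the infinite-rank summand carrying the matrix datum and compressing by the relevant finite-rank subprojections, one recovers, for each block $C_m\cong A_{n,j}$, a triple of nearly-commuting self-adjoint matrices that is $\ep_0$-close to a commuting triple — and this closeness cannot be upgraded to exact commutation of the compressions, but a standard spectral-projection argument (cut down $S_{n,j}$ by a spectral projection of the commutative $C^*$-algebra they generate corresponding to a ball around the relevant point of $S$) produces a genuinely commuting triple of self-adjoint matrices $\ep_0'$-close to $A_{n,j}$ for some $\ep_0'$ slightly smaller than $\ep_0$, contradicting the defining property of the sequence $\{A_{n,\bullet}\}$ once $\ep_0$ was chosen below the corresponding matrix-level threshold. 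The main obstacle is the first paragraph: one needs an explicit, or explicitly-citable, sequence of self-adjoint matrix triples with commutators tending to zero whose distance to commuting triples is bounded below, i.e. a genuine three-variable analogue of Voiculescu's example — and then one must check that the affine rescaling used to park it inside $1_{B(H)/{\cal K}}$ only diminishes, and does not destroy, that lower bound. The rest (direct-sum assembly, making the Calkin image scalar, and the compression argument for the lower bound) is bookkeeping of the kind already carried out in the paper's other examples. I would also remark that the single-point essential $\eta$-spectrum forces the $KK$-class in $KK(C(\Omega),B(H)/{\cal K})$ to vanish, so this example is genuinely sharper than what a $K$-theoretic obstruction alone would give, which is the point of including it.
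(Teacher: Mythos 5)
Your overall architecture is the same as the paper's: both use a three-dimensional Bott-type (top-degree cohomology) obstruction on a sequence of finite matrix algebras, park that data on a finite-rank block so the Calkin image is scalar (hence the essential $\eta$-synthetic-spectrum is a point and the $KK$-obstruction vanishes), and then derive a contradiction by compressing hypothetical commuting approximants back down to the finite-rank block. The paper's source is Theorem~4.1 of \cite{GL2}, which supplies contractive, asymptotically multiplicative unital maps $\Lambda_n: C(\D^3)\to M_{n^3}$ that stay uniformly far from every homomorphism; that is exactly the ``nonzero second Chern number'' sequence you are gesturing at, and you should cite it rather than assert its existence.

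There is, however, a genuine gap in your second paragraph, and it is precisely at the spot you flagged as needing care. The rescaling ``to live in $[1-\eta,1]$'' is both unnecessary and harmful. It is unnecessary because $\pi_c(T_{n,j})=1_{B(H)/{\cal K}}$ requires only that $T_{n,j}-1$ be compact, which is automatic once the non-identity blocks have total finite rank; the matrix data can live anywhere in $[-1,1]^3$ and the norm bound $\|T_{n,j}\|\le 1$ still holds, since $(1-p_n)\cdot 1\oplus p_n A_{n,j}p_n$ has norm $\max\{1,\|A_{n,j}\|\}$. It is harmful for two reasons. First, if $\eta\to 0$ the commutator bound improves by $\eta^2$ but the distance to commuting triples shrinks by $\eta$, so no universal $\ep_0$ survives; and for fixed $\eta$ you have merely replaced a clean obstruction with a dilated one. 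Second, and more seriously, squeezing the matrix piece into a window abutting the scalar value $1$ destroys the spectral separation you need in the last step: your ``cut down by a spectral projection corresponding to a ball around the relevant point of $S$'' only isolates the matrix block if the joint quasi-spectrum of the matrix data is a definite distance away from the essential joint-spectrum point $(1,1,1)$. The paper arranges this by taking the matrix data to concentrate on $\D^3$ (the unit ball around the origin) while the scalar sits at the isolated point $(1,1,1)$, giving a gap of $\sqrt 3-1$; the bump function $f_0$ supported near $\D^3$ and vanishing near $(1,1,1)$ then cleanly extracts a unital homomorphism into $p_nB(H)p_n$ that is close to $\Lambda_n$, contradicting the Gong--Lin lower bound. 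If you drop the rescaling and use the disjoint union $\D^3\sqcup\{(1,1,1)\}$ as the model space, your argument lines up with the paper's proof.
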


\begin{proof}
Let $\D^3$ be the three dimensional unit ball in $\I^3.$ 
By Theorem 4.1 of \cite{GL2}, there exists a sequence of contractive linear maps 
$\Lambda_n: C(\D^3)\to M_{n^3}$ and    $a>0$ which satisfy the following:
\beq\label{Plast-4}
&&\lim_{n\to \infty}\|\Lambda_n(fg)-\Lambda_n(f)\Lambda_n(g)\|=0\rforal f, g\in C(\D^3)\andeqn\\
\label{PLast-5}
&&\inf \{\sup\{\|\Lambda_n(e_j|_{\D^3})-\phi_n(e_j|_{\D^3})\|: 0\le j\le 3\}\}\ge a, 
\eneq
where the infimum is taken among all \hm  s $\phi_n: C(\D^3)\to M_{n^3}.$

Let  $B_1=\prod_nM_{n^3}$ and $\Pi_1: B_1\to B_1/\bigoplus_n M_{n^3}$ be  the quotient map. 
Then $\psi=\Pi_1\circ \{\Lambda_n\}: C(\D^3)\to B_1/B_1/\bigoplus_nM_{n^3}$ 
is a \hm.   Theorem 4.1 of \cite{GL2} allows each $\Lambda_n$ to be 
$\sigma_n$-injective (see Definition 0.3 in \cite{GL2}) with $\sigma_n\to 0.$ This feature implies 
that $\psi$ is injective. Let $q=\psi(1).$ Then 
there exists a sequence of projections 
$p_n\in M_{n^3}$ such that $\Pi_1(\{p_n\})=q.$ 
Put $D_n=p_nM_{n^3}p_n,$ $n\in \N.$
Note that
\beq
\lim_{n\to \infty}\|\Lambda_n(1)-p_n\|=0.
\eneq
Hence 
\beq
\lim_{n\to\infty}\|p_n\Lambda_n(1)p_n-p_n\|=0.
\eneq
There are $d_n\in D_n$ such that, for all sufficiently large $n,$ 
\beq
d_np_n\Lambda_n(1)p_nd_n=p_n \andeqn \lim_{n\to\infty}\|d_n-p_n\|=0.
\eneq
Therefore, by replacing $\Lambda_n$ with $d_np_n\Lambda_np_nd_n,$
we may assume that $\Lambda_n$ maps  $C(\D^3)$ into  $D_n$ and 
each $\Lambda_n$ is unital.

Choose $\lambda_0=(1,1,1)\in \I^3.$ 
Identify  $M_{n^3}$ with a \SCA\, of ${\cal K}\subset B(H)$ such that
$1_{M_{n^3}}$ becomes a projection of rank $n^3.$ 
In particular, $p_n\in B(H)$ as  a finite rank ($\le n^3$) projection.
Put $X=\D^3\sqcup \{(1,1,1)\}.$ 
Define $\Phi_n:C(X)\to B(H)$ by
\beq
\Phi_n(f)=f((1,1,1))(1-p_n)+\Lambda_n(f|_{\D^3})\rforal f\in C(\I^3).
\eneq
Choose  (with $e_0=1$)
\beq
T_{n,j}=\Phi_n(e_j)=(1-p_n)+\Lambda_n(e_j|_{\D^3}),\,\,\,j=0,1,2, 3.
\eneq
So $T_{n,j}\in B(H)_{s.a.}$ and $\|T_{n,j}\|\le 1,$ $0\le j\le 3.$
Moreover 
\beq
\lim_{n\to\infty}\|T_{n,i}T_{n,j}-T_{n,j}T_{n,i}\|=0,\,\,\, 1\le i,j\le 3.
\eneq
Furthermore, 
\beq\label{Plast-10}
\pi_c(T_{n,j})=\pi_c(e_j((1,1,1))(1-p_n))=e_j((1,1,1))1_{B(H)/{\cal K}}=1_{B(H)/{\cal K}},\,\, \, 1\le j\le 3,
\eneq
where $\pi_c: B(H)\to B(H)/{\cal K}$ is the quotient map.

Let $B=\prod(\{B(H)\})$ and $\Pi: B\to B/\bigoplus(\{B(H)\})$ be the quotient map.
Define $\td \phi: C(X)\to B/\bigoplus(\{B(H)\})$ by
$\td \phi(f)=f((1,1,1))(1-q)+\phi(f|_{\D^3})$ for $f\in C(X).$

If there is a sequence of  commuting triples of self-adjoint operators $S_{n,j}\in B(H)$
such that 
\beq
\lim_{n\to\infty}\|S_{n,j}-T_{n,j}\|=0,\,\,\, 1\le j\le 3, 
\eneq
then one obtains a sequence of  unital \hm s $\psi_n: C(\I^3)\to B(H)$ 
such that
\beq
\lim_{n\to\infty}\|\psi_n(e_j)-T_{n,j}\|=0,\,\,\, 1\le j\le 3.
\eneq
Hence 
\beq\label{Plast-15}
\lim_{n\to\infty}\|\psi_n(f)-\Phi_n(f|_X)\|=0\rforal f\in C(\I^3).
\eneq
Then $\Pi\circ\{ \psi_n\}=\td \phi\circ r,$
where $r: C(\I^3)\to C(X)$ is defined by $r(f)=f|_X$ for $f\in C(\I^3).$

Choose $f_0\in C(\I^3)$ such that 
$0\le f_0\le 1,$ $f_0(\xi)=0,$ if $\dist(\xi, \D^3)\ge 1/32$ and $f_n(\xi)=1,$
if $\dist(\xi, \D^3)<1/64.$ 
Note that $f_0((1,1,1))=0$ and 
\beq 
\Phi_n(f_0|_X)=p_n \andeqn \Phi_n(f_0e_j|_X)=\Lambda_n(e_j|_X),\,\, 1\le j\le 3,\,\,\,n\in \N. 
\eneq
Put  $c_{n,j}=\psi_n(f_0e_j),$ $1\le j\le 3$  and 
$c_{n,0}=p_n,$ $n\in \N.$
Then 
\beq
\lim_{n\to\infty} \|c_{n,j}-\Lambda_n(e_j|_{\D^3})\|=0\tforal 1\le j\le 3. 
\eneq
Recall that 
\beq
\sqrt{\|e_1^2|_{\D^3}+e_2^2|_{\D^3}+e_3^2|_{\D^3}\|}\le 1.
\eneq
Put ${\bar c}_{n,j}={c_{n,j}\over{\|\sum_{j=1}^3 c_{n,j}^2\|^{1/2}}},$ $1\le j\le 3,$
${\bar c}_{n,0}=p_n,$ and $n\in \N.$
It follows that
\beq
\lim_{n\to\infty}\|{\bar c}_{n,j}-\Lambda_n(e_j|_{\D^3})\|=0.
\eneq
Let $C_n$ be the (commutative) \SCA\, of $B(H)$ generated by 
$\{{\bar c}_{n,j},$ $0\le j\le 3\}.$ Then there is an isomorphism 
$h_n: C_n\cong C(Y_n)$ for some compact subset $Y_n\subset \D^3$
such that  $h_n^{-1}(e_j|_{Y_n})={\bar c}_{n,j},$ $0\le j\le 3,$ and $n\in \N.$ 
Define $H_n:C(\D^3)\to C\subset B(H)$ by
$H_n(f)=h_n^{-1}(f|_{Y_n})$ for $f\in C(\D^3).$ 
Since $\{e_j|_{\D^3}:0\le j\le 3\}$ generates $C(\D^3),$
we have 
\beq
\lim_{n\to\infty}\|H_n(f)-\Lambda_n(f)\|=0\rforal f\in C(\D^3).
\eneq
Let $E_n=H_n(1_{C(\D^3)}),$ $n\in\N.$ Then,
\beq
\lim_{n\to\infty}\|E_n-p_n\|=0.
\eneq
 Thus, for all large $n,$ 
there exist unitaries $w_n\in B(H)$ such that
\beq
w_n^*p_nw_n=E_n,  w_nE_nw_n^*=p_n\andeqn \lim_{n\to\infty}\|w_n-1\|=0.
\eneq
Define $\Psi_n: C(\D^3)\to p_nB(H)p_n$ 
by $\Psi_n(f)=w_nH_n(f)w_n^*$ for $f\in C(\D^3).$ 
Then
\beq
\lim_{n\to\infty}\|\Psi_n(f)-\Lambda_n(f)\|=0\rforal f\in C(\D^3).
\eneq
This contradicts  \eqref{PLast-5}. Therefore  there is  no
\hm s $\psi_n$ such that
\eqref{Plast-15} holds.  In other words, there is $\ep_0>0$ such that 
\beq
\inf \{\sup\{\|\psi_n(e_j)-T_{n,j}\|: 1\le j\le 3\}\}\ge \ep_0, 
\eneq
where the infimum is taken among all \hm  s $\psi_n: C(\I^3)\to B(H).$
Combining  this with \eqref{Plast-10}, the proposition follows.
\end{proof}

\begin{rem}\label{Rcomp}
By 
Lin's theorem 
for almost commuting self-adjoint matrices
(\cite{Linmatrices}, see also \cite{FR}), when $n=2,$ 
the answer to the version of the Mumford problem (\eqref{mfp-1} and \eqref{mfp-2}) 
is affirmative if each $T_j$ is compact. However, if $n>2,$ the answer is negative 
even each $T_j$ is compact in general, by Theorem 4.1 of \cite{GL2}.
\end{rem}

\begin{rem}\label{Finalrem}
This remark was added to the original version of the paper after we 
had the opportunity to study Chapter 14 of David Mumford's insightful new book \cite{Mf2},
kindly provided by the author. 
%
Some terminologies and motivation are drawn from this work. 

In quantum theory, macroscopic observables may be modeled as 
self-adjoint operators  $T_1, T_2,...,T_n$ on an infinite dimensional  separable Hilbert space $H.$
Let $A$ be the  unital \SCA\, generated by $\{T_j: 1\le j\le n\}$ and $1$
(assuming $\|T_j\|\le 1$ for convenience).
Suppose that $\{e_m\}$ is an orthonormal basis of $H,$  and 
suppose that, for any $\ep>0,$ there is $N\in \N$ such that
for all $m\ge N,$ 
$
\|T_je_m\|<\ep.
$
In this case these observables collapse uniformly and  any measurements 
outside (orthogonal to) a finite dimensional subspace vanish, meaning an external observer would detect nothing. We exculde this case by assuming 
none of $T_j$'s are compact (see also Remark \ref{Rcomp}).

Given a state defined by a unit vector $x\in H,$ the expected value of observable $T$ 
 is $\exp_T(x)=\la Tx,x\ra$ (see p.178 of \cite{Mf2}).
 Mumford introduces {\it Approximately Macroscopically Unique (AMU) states} as ``near eigenvectors":
for a small $\sigma>0,$ 
$$
{\rm AMU}(\{T_j: 1\le j\le n, \sigma\})=\{x\in H: \|x\|=1, {\rm sd}_{T_j}(x)<\sigma,\,\,1\le j\le n\},
$$
where ${\rm sd}_{T_j}(x)=\|(T_j-\exp_{T_j}(x) I)x\|$ (see Chapter 14 of \cite{Mf2}). 
Mumford asked whether AMU exist. 
As noted   in \cite{Mf2}, for AMU to be non-empty, it is natural to require 
the commutators $\|[T_i,\, T_j]\|$ to be small.  
If the question posed in the introduction (see \eqref{mfp-1} and \eqref{mfp-2}) had an affirmative
answer,  then the  quantum system could be approximated by a classical one, ensuring the existence 
of AMU states. 
However, as  
observed by Mumford,  small commutators alone are insufficient in general.
(Proposition \ref{PLast} further shows the complexity of the problem).

In an infinite dimensional separable Hilbert space $H,$ a self-adjoint operator  such as $T_j$ may
lack 
eigenvalues but 
admit approximate eigenvalues.
  However, if one allows non-normal states of $B(H),$  or that of $A+{\cal K}$
  (where ${\cal K}$ is the ideal of compact operators),  then 
$T_j$ does have eigenstates and eigenvalues in the atomic representation of $B(H),$ or $A+{\cal K}$ 
(see, for example, Proposition 4.3.10 of \cite{Pedbook}). In particular,  one should consider those  states of $B(H)$ 
which vanish on ${\cal K}.$ 

For a  point $\xi=(t_1, t_2,...,t_n)$ in the unit cube of $\R^n$ to be a joint approximate eigenvalue, 
we may test it using the function $\Theta_{\xi,\eta}$ defined in \ref{Dpseudosp}. 
%
In order to have $\|\Theta_{\xi, \eta}(T_1, T_2,...,T_n)\|\ge 1-\eta$ (for small $\eta$),
one only needs one vector state $\phi(\cdot)=\la \,\cdot\,\, x_1, x_1\ra$ (with a unit vector $x_1\in H$) such that 
\beq\label{int-1}
\la \Theta_{\xi,\eta}(T_1, T_2,...,T_n)x_1, x_1\ra\ge 1-\eta.
\eneq
One may expect this can be repeatedly measured.
Suppose that $y_1,...,y_{l_1}$ are unit vectors and $H_1$ is a finite dimensional subspace spanned by
$x_1, y_1,...,y_{l_1}.$ Suppose that there is another unit vector $x_2\in H_1^\perp$
such that \eqref{int-1} also holds for $x_2$ (in place of $x_1$), and this persists. 
Then we obtain a sequence of $\{x_m\}$ ($x_{m+1}\perp H_m$) such that 
\eqref{int-1} holds for each $x_m$ (in place of $x_1$).
Note that unit vector states given by
$\{x_m\}$ has limit points which give   states $\phi$ of $A+{\cal K}$ vanishing
on ${\cal K}$ and  $\phi(\Theta_{\xi,\eta}(T_1, T_2,...,T_n))\ge 1-\eta.$

Let $\pi_c: B(H)\to B(H)/{\cal K}$ be the quotient map and $\psi$ be a state of 
the corona algebra $B(H)/{\cal K}$ (or  that of $(A+{\cal K})/{\cal K}$).  Then
the composition $\psi\circ \pi_c$ provides a 
 quantum state which may be better suitable to this setting.
Indeed, one may expect
at least for one such quantum state $\psi\circ \pi_c$ satisfying
\beq\label{int-2}
\psi\circ \pi_c (\Theta_{\xi,\eta}(T_1, T_2,...,T_n))\ge 1-\eta.
\eneq
Such measurements, 
given by states of the form $\psi\circ \pi_c,$ may be called  {\it outside measurements}.
Via  the Gelfand-Naimark construction, this state $\psi\circ \pi_c$  corresponds to a vector state $y$ in the atomic representation space $H_a,$ yielding
 \beq\label{int-3}
\la \Theta_{\xi,\eta}(T_1, T_2,...,T_n)y,y\ra\ge 1-\eta. 
 \eneq
 If inequality \eqref{int-2}  holds
  whenever \eqref{int-1} does, then 
the second condition in \eqref{TTmul00-1} always satisfies (in fact, in this case $X=Y$). 
 
 Let us temporerly propose the following terminology:
 
 $\cdot$ A unit vector $x\in H$ satisfying \eqref{int-1} is a 
{\it synthetic-AMU state}, with $\xi$ a {\it synthetic-joint-expected value.}

$\cdot$  A state of the form $\psi\circ\pi_c$ satisfying  \eqref{int-2} is an 
{\it essential synthetic-AMU state}, with $\xi$ {\it an essential synthetic-joint-expected value}.

Note that 
any  AMU state (with sufficiently small $\sigma$) is an ($\eta$)-synthetic-AMU state. 
Moreover, by Proposition \ref{Pappsp}, the set of synthetic-AMU states is non-empty,
when $\|[T_i, T_j]\|$ is small (independent of $T_j$).
The second condition in Theorem \ref{TTTmul} then may be reformulated as:

(C): {\it all synthetic-joint-expected values lie  in a sufficiently small neighborhood of the set 
of essential synthetic-joint-expected values.}

This seems to suggest  a physical constraint on macroscopic systems: 
{\it local measurement should not deviate significantly from outside measurements} 
(or a long-term measurements along an orthonormal basis of the Hilbert space). 
Under this constraint, when commutators tend to zero, by Theorem \ref{TTTmul}, the quantum system 
becomes approximable by a 
classical commutative one.

\end{rem}

\vspace{0.4in}



\noindent email: hlin@uoregon.edu
\end{document}